\DeclareMathOperator{\Hom}{Hom}
\DeclareMathOperator{\uHom}{\underline{Hom}}
\DeclareMathOperator{\End}{End}
\DeclareMathOperator{\uEnd}{\underline{End}}
\DeclareMathOperator{\Ext}{Ext}
\DeclareMathOperator{\im}{Im}
\DeclareMathOperator{\Ker}{Ker}
\DeclareMathOperator{\sgn}{sgn}
\DeclareMathOperator{\tp}{top}
\DeclareMathOperator{\rad}{rad}
\DeclareMathOperator{\soc}{soc}
\DeclareMathOperator{\Mod*}{mod}
\DeclareMathOperator{\MOD}{Mod}
\DeclareMathOperator{\Coker}{Coker}
\DeclareMathOperator{\val}{val}
\DeclareMathOperator{\id}{id}
\DeclareMathOperator{\Aut}{Aut}
\newcommand{\str}{\mathcal{S}}
\newcommand{\band}{\mathcal{B}}
\newcommand{\hstep}{\mathcal{H}_\chi}
\newcommand{\hstepone}{\hstep^1}
\newcommand{\hsteptwo}{\hstep^2}
\newcommand{\hstepthree}{\hstep^3}
\newcommand{\Mchi}{\mathcal{M}_\chi}
\newcommand{\Vchi}{\mathcal{V}_\chi}
\newcommand{\Gchi}{\mathcal{G}_\chi}
\newcommand{\Pchi}{\mathcal{P}_\chi}
\newcommand{\Wchi}{\mathcal{W}_\chi}
\newcommand{\Dchi}{\mathcal{D}_\chi}
\newcommand{\mult}{\mathfrak{m}}
\newcommand{\cycle}{\mathfrak{C}}
\newcommand{\sgerm}{\widehat{s}}
\newcommand{\tgerm}{\widehat{t}}
\newcommand{\Atilde}{\widetilde{\mathbb{A}}}
\newcommand{\Dtilde}{\widetilde{\mathbb{D}}}
\newcommand{\Etilde}{\widetilde{\mathbb{E}}}
\newcommand{\inv}{^{-1}}
\newcommand{\ualpha}{\alpha}
\newcommand{\ubeta}{\beta}
\newcommand{\ueta}{\eta}
\newcommand{\ub}{b}
\newcommand{\uw}{w}
\newcommand{\bw}{\mathbf{w}}
\newcommand{\ap}{\mathcal{A}}
\newcommand{\sap}{\underline{\mathcal{A}}}
\newcommand{\es}{\mathcal{E}}
\theoremstyle{plain}
\newtheorem{thm}{Theorem}[section]
\newtheorem*{thm*}{Theorem}
\newtheorem{lem}[thm]{Lemma}
\newtheorem{cor}[thm]{Corollary}
\newtheorem{prop}[thm]{Proposition}
\newtheorem*{prop*}{Proposition}
\theoremstyle{definition}
\newtheorem{defn}[thm]{Definition}
\newtheorem{exam}[thm]{Example}
\newtheorem*{exam*}{Example}
\theoremstyle{remark}
\newtheorem{rem}[thm]{Remark}
\numberwithin{equation}{section}
\begin{document}
	{ \small
	\title[Tubes in Symmetric Special Multiserial Algebras]{Tubes Containing String Modules in Symmetric Special Multiserial Algebras}
	\author{Drew Damien Duffield}
	\maketitle
	\begin{abstract}
Symmetric special multiserial algebras are algebras that correspond to decorated hypergraphs with orientation, called Brauer configurations. In this paper, we use the combinatorics of Brauer configurations to understand the module category of symmetric special multiserial algebras via their Auslander-Reiten quiver. In particular, we provide methods for determining the existence and ranks of tubes in the stable Auslander-Reiten quiver of symmetric special multiserial algebras using only the information from the underlying Brauer configuration. Firstly, we define a combinatorial walk around the Brauer configuration, called a Green `hyperwalk', which generalises the existing notion of a Green walk around a Brauer graph. Periodic Green hyperwalks are then shown to correspond to periodic projective resolutions of certain classes of string modules over the corresponding symmetric special multiserial algebra. Periodic Green hyperwalks thus determine certain classes of tubes in the stable Auslander-Reiten quiver, with the ranks of the tubes determined by the periods of the walks. Finally, we provide a description of additional rank two tubes in symmetric special multiserial algebras that do not arise from Green hyperwalks, but which nevertheless contain string modules at the mouth. This includes an explicit description of the space of extensions between string modules at the mouth of tubes of rank two.
	\end{abstract}

	~
	
	\setcounter{tocdepth}{1}
	\tableofcontents

	\section*{Acknowledgements}
	The author would like to thank the Leverhulme Trust, as this paper was completed whilst under the funding of the Leverhulme Trust grant RPG-2019-153. The author would also like to thank the reviewers for conducting a thorough review of the paper.
	
	\section{Introduction}
	Since their inception special biserial algebras and their various subclasses such as gentle algebras have been extremely influential. Historically, they appear in various classification theorems, such as the classification of blocks of tame representation type \cite{ErdmannString}. More recently, gentle algebras have received renewed interest due to their appearance in a wide array of modern areas of mathematical research, such as in dimer models (\cite{Broomhead}) and in cluster theory, where they appear as Jacobian algebras of associated to surfaces with boundaries and marked points (\cite{GentleTriangulation, Labardini}).
	
	There have been numerous generalisations of special biserial algebras and gentle algebras. On the one hand, there are the clannish algebras (\cite{Clans, ClanMaps}) and skewed-gentle algebras (\cite{Pena}), which are also fundamental to representation theory through their recent applications to cluster theory (see for example \cite{Qiu}). On the other hand, there are the special multiserial algebras (\cite{BCA, Multiserial,VonHohne}) and almost gentle algebras (\cite{AlmostGentle}), which themselves contain certain subclasses of clannish algebras, skewed-gentle algebras, radical cube zero algerbas (\cite{BensonRad}), and iterated tilted algebras of type $\mathbb{E}$ and $\Etilde$ and their trivial extensions (\cite{AlmostGentle, SkowDomestic}).
	
	The focus of this paper is on the representation theory of this latter class of algebras --- namely, special multiserial algebras. The representation theory of these algebras is extremely complicated, and is currently only well-understood in certain cases. For example, the subclass of special biserial algebras is tame (\cite{TameBiserial,WaldWasch}) and a classification of the indecomposable modules and morphisms between them is known in this case. This is the celebrated string and band classification appearing in various papers (for example, \cite{butlerRingel,CBMaps,TreeMaps,WaldWasch}), which is largely due to the functorial filtration method utilised by Gel'fand and Ponomarev in \cite{FuncFilt}. The class of clannish and skewed-gentle algebras are also tame with a similar classification of their indecomposable modules (\cite{Clans,Deng}) and morphisms (\cite{ClanMaps}). However in most cases, special multiserial algebras are wild in the sense of \cite{Drozd} (see \cite{DuffieldTameWild} for an account when the algebra is symmetric). Thus, a classification of the indecomposable representations is largely out of reach for many general special multiserial algebras. Thus, the question then turns to what information one can discover about special multiserial algebras in spite of their generally wild behaviour.
	
	With this goal in mind, it is useful to look to the class of special multiserial algebras that are symmetric. Not only is every special multiserial algebra a quotient of a symmetric special multiserial algebra (\cite{SMQuotient}), but one can also associate a decorated form of hypergraph with orientation to the algebra, called a Brauer configuration (\cite{BCA}). Brauer configurations can be viewed as a certain hypergraph generalisation of ribbon graphs, and in particular, as a generalisation of Brauer graphs, which appear in numerous papers that utilise symmetric special biserial algebras (such as \cite{ErdSkowSurface,BGACover,BGAExt,MarshSchroll,Roggenkamp,TrivialGentle}). Brauer configurations are incredibly useful devices in understanding the representation theory of symmetric special multiserial algebras, as the representation theory of the special multiserial algebra is encoded in the combinatorial data of the corresponding Brauer configuration. In addition, Brauer configurations provide a useful way of classifying subfamilies of symmetric special multiserial algebras that have common representation theoretic properties, as demonstrated in \cite{DuffieldTameWild}.
	
	The Auslander-Reiten quiver of an algebra is a translation quiver that serves as a means of presenting categories associated to the algebra, such as the module category. The vertices of the Auslander-Reiten quiver consist of the indecomposable objects, and the arrows are the irreducible morphisms between indecomposables, with the translate given by the Auslander-Reiten translate $\tau$. If an algebra is of infinite representation type, then the Auslander-Reiten quiver of the algebra's module category contains a number of connected components of various different shapes with respect to the Auslander-Reiten translate.
	
	Understanding the Auslander-Reiten theory of an algebra is an important goal in understanding its representation theory. The structure of the Auslander-Reiten quiver of the module categories of special biserial algebras and clannish algebras is already well-understood (\cite{butlerRingel,ClanMaps,Pena}). For gentle and skewed-gentle algebras, our understanding also extends to the derived category (\cite{GentleDerMorph,BekkertSkewGentle,BekkertGentle}). 
	
	In the case where the algebra is symmetric, there has been numerous work on the shapes of the Auslander-Reiten components in the module category of special biserial algebras (\cite{Chinburg,DuffieldBGA,ErdmannAR,GroupsWOGroups,GreenWalk}). Since symmetric special multiserial algebras generalise symmetric special biserial algebras, it makes sense to see what results from the Auslander-Reiten theory in the biserial setting carry over to the more general and complicated multiserial setting.  This is precisely the aim in the first few sections of this paper.
	
	In \cite{DuffieldBGA}, it is shown that the tubes containing string modules in the stable Auslander-Reiten quiver $_s\Gamma_A$ of the module category of a symmetric special biserial algebra $A$ are in bijective correspondence with combinatorial walks around the Brauer graph corresponding to $A$, called double-stepped Green walks (named after the walks appearing in \cite{GreenWalk}). In Section~\ref{sec:Hyperwalks}, we generalise the notion of a Green walk to Brauer configurations, where we call them Green hyperwalks. After examining the combinatorial properties of these newly defined walks, we prove a result in Section~\ref{sec:ProjResolutions} on Green hyperwalks that is analogous to the results in \cite{Roggenkamp} on the periodic projective resolutions of modules associated to Green walks. Namely, we define a distinguished set of modules $\Mchi$ in the symmetric special multiserial algebra that bijectively correspond to the steps along Green hyperwalks and show the following.
	\begin{thm*}
		Let $A$ be a symmetric special multiserial algebra corresponding to a Brauer configuration $\chi$. Consider the minimal projective resolution
		\begin{equation*}
			\mathbf{P}_\bullet\colon \cdots \rightarrow P_1 \rightarrow P_0 \rightarrow M \rightarrow 0
		\end{equation*}
		of a module $M \in \mathcal{M}_\chi$ corresponding to the zeroth step of a periodic Green hyperwalk of $\chi$. Then $\mathbf{P}_\bullet$ is periodic. Furthermore, the $i$-th syzygy $\Omega^i(M)$ is in $\Mchi$ and corresponds to the $i$-th step along the Green hyperwalk.
	\end{thm*}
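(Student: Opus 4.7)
The plan is to leverage the bijection between $\Mchi$ and steps along Green hyperwalks of $\chi$ that was constructed earlier in the paper, and to show that the syzygy operator $\Omega$ on $\Mchi$ is intertwined with the step-forward operation on Green hyperwalks. Once this single-step statement is available, everything else follows by induction and by the assumed periodicity of the walk.

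First I would recall from Section~\ref{sec:Hyperwalks} how a step of a Green hyperwalk is encoded by the local combinatorics of $\chi$ --- namely by a polygon $V$, a distinguished vertex $v$ of $V$, and the cyclic orderings of polygons around the vertices of $V$ --- and how the associated module $M \in \Mchi$ is read off as a string module whose string is dictated by this data. This makes it possible to phrase the problem as a purely combinatorial matching question between the string of $\Omega(M)$ and the string prescribed by the step-forward rule.

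Next I would compute the projective cover $P(M) \twoheadrightarrow M$ and its kernel $\Omega(M)$ directly from the Brauer configuration. Because $A$ is symmetric special multiserial, each indecomposable projective admits an explicit description whereby, modulo its socle, it decomposes as a sum of uniserial modules, one for each vertex of the associated polygon, with each uniserial summand controlled by the cyclic ordering of polygons at that vertex. Using this, I would identify the top of $M$, read off $P(M)$, and compute the string of $\Omega(M)$. The crucial observation is that this string exactly matches the string of the module assigned to the next step of the Green hyperwalk by the rules of Section~\ref{sec:Hyperwalks}; hence $\Omega(M) \in \Mchi$ and corresponds to step $1$.

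The bookkeeping in the step above is the main obstacle. In the multiserial setting one must simultaneously track both the active vertex of the current polygon (which changes when the walk ``turns the corner'' of a hyperedge) and the cyclic neighbour of that vertex along which the walk proceeds, whereas in the biserial case treated in \cite{Roggenkamp} each edge of a Brauer graph has exactly two endpoints and the corner-turning step is trivial. The combinatorial conditions defining a Green hyperwalk should be exactly the conditions needed to make these two choices compatible with the shape of $\Omega(M)$. Once the single-step correspondence is in place, applying the same argument to $\Omega(M) \in \Mchi$ yields by induction that $\Omega^i(M) \in \Mchi$ corresponds to step $i$ for every $i \geq 0$. Finally, if the Green hyperwalk has period $n$, step $n$ returns to step $0$, so by injectivity of the bijection $\Omega^n(M) \cong M$, and the resolution $\mathbf{P}_\bullet$ is periodic.
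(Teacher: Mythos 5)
Your proposal matches the paper's argument for Theorem~\ref{ProjRes}: establish the one-step identity $\Omega(\omega^{-1}(X_i)) = \omega^{-1}(X_{i+1})$ by reading $\rad P/\soc P$ off the local combinatorics at the relevant polygon, then induct, and then deduce periodicity of $\mathbf{P}_\bullet$ from periodicity of the walk. The paper carries out the bookkeeping you flag as the ``main obstacle'' via an explicit case split on $|\pi(g_i)|\in\{2,3,4\}$ and on which of $\hstep^1,\hstep^2,\hstep^3$ the step $X_i$ lies in (delegating the $|\pi(g_i)|=2$ case to \cite{Roggenkamp}), but the route is the same as the one you describe.
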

	
	In Section~\ref{sec:HyperwalkTubes}, we show that the modules in $\Mchi$ associated to periodic Green hyperwalks sit at the mouth of a tube in the stable Auslander-Reiten quiver $_s\Gamma_A$ of the module category of a symmetric special multiserial algebra $A$. This allows us to generalise the main result of \cite{DuffieldBGA} on tubes to the following.
	\begin{thm*}
		Let $A$ be a connected symmetric special multiserial algebra of infinite-representation type corresponding to a Brauer configuration $\chi$.
		\begin{enumerate}[label=(\alph*)]
			\item Let $X$ be a periodic double-stepped Green hyperwalk of $\chi$ of period $n$. Then there exists a corresponding tube $\mathcal{T}_X$ of the form $\mathbb{Z}\mathbb{A}_\infty / \langle \tau^n \rangle$ in $_s\Gamma_A$.
			\item Given a tube $\mathcal{T}_X$ in $_s\Gamma_A$ corresponding to a periodic double-stepped Green hyperwalk $X$, the modules at the mouth of $\mathcal{T}_X$ are precisely the modules of $\Mchi$ corresponding to the steps of $X$.
			\item Let $X$ and $X'$ be distinct periodic double-stepped Green hyperwalks of $\chi$. Then $\mathcal{T}_X$ and $\mathcal{T}_{X'}$ are distinct tubes in $_s\Gamma_A$.
		\end{enumerate}
	\end{thm*}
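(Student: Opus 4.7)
The plan is to combine the periodicity result of Section~\ref{sec:ProjResolutions} with the general fact that, for a symmetric algebra, the Auslander-Reiten translate agrees with the second syzygy on the stable module category, i.e.\ $\tau = \Omega^2$. Let $M_0 \in \mathcal{M}_\chi$ be the module corresponding to the zeroth step of a periodic double-stepped Green hyperwalk $X$ of period $n$. By the theorem of Section~\ref{sec:ProjResolutions}, $\Omega^{2i}(M_0) = \tau^i(M_0)$ is the module in $\mathcal{M}_\chi$ corresponding to the $i$-th double-step of $X$, and double-stepped periodicity gives $\tau^n(M_0) \cong M_0$. In particular $M_0$ is $\tau$-periodic, so by the theorem of Happel-Preiser-Ringel its component in ${}_s\Gamma_A$ is a stable tube of the form $\mathbb{Z}\mathbb{A}_\infty / \langle \tau^r \rangle$ for some $r \geq 1$ dividing $n$.

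Completing part (a) then requires two verifications: that $M_0$ sits at the \emph{mouth} of this tube, and that the rank equals $n$. The rank equality is the easier half: since $X$ has double-stepped period exactly $n$, its $n$ steps yield $n$ pairwise distinct modules in $\mathcal{M}_\chi$, so $M_0, \tau M_0, \ldots, \tau^{n-1}M_0$ are pairwise non-isomorphic, forcing $r = n$. The mouth property is the principal technical obstacle. I will establish it by constructing the almost split sequence $0 \to \tau M_0 \to E \to M_0 \to 0$ explicitly via the string combinatorics of symmetric special multiserial algebras, extending the hook/cohook calculus of the biserial setting. The goal is to show that, after cancelling projective summands, $E$ contains a unique indecomposable direct summand, which characterises $M_0$ as a mouth module. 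The complication absent from [DuffieldBGA] is that at a vertex of $\chi$ the local configuration can have arbitrary valency and multiplicity, so the one-summand conclusion must be verified across all admissible local pictures of a hyperwalk step; this is where most of the case-analysis will be concentrated.

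Part (b) is then a direct consequence: the mouth of a tube $\mathbb{Z}\mathbb{A}_\infty/\langle \tau^n \rangle$ is a single $\tau$-orbit of $n$ modules, which by our identification coincides with the set of modules of $\mathcal{M}_\chi$ corresponding to the steps of $X$. For part (c), I argue by contradiction: if $X$ and $X'$ are distinct periodic double-stepped hyperwalks with $\mathcal{T}_X = \mathcal{T}_{X'}$, then mouth modules $M, M'$ coming from steps of $X$ and $X'$ respectively both lie in the same $\tau$-orbit, so $M' \cong \tau^k M$ for some $k$. But by the theorem of Section~\ref{sec:ProjResolutions} the hyperwalk determined by $\tau^k M$ is just the hyperwalk determined by $M$ advanced by a fixed number of steps, namely $X$ itself (up to cyclic reindexing of its starting point). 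Hence $X' = X$, contradicting the assumption that the two walks are distinct.
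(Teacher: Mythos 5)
Your $\tau$-periodicity argument for part (a) mirrors the paper: you invoke the periodicity of the double-stepped syzygy sequence (from Theorem~\ref{ProjRes}) together with $\tau = \Omega^2$ for symmetric algebras, then appeal to the Happel--Preiser--Ringel/Todorov classification of components containing a $\tau$-periodic module, and you rule out the finite-Dynkin alternatives via connectedness of $A$. Your rank argument --- the bijectivity of $\omega$ and minimality of the period $n$ force the $n$ mouth candidates to be pairwise non-isomorphic --- is correct and matches the paper's implicit reasoning. Your arguments for (b) and (c) (mouth is a single $\tau$-orbit; distinct walks give distinct $\tau$-orbits via Theorem~\ref{ProjRes}) are also sound and close to what the paper does. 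One small misplacement: the mouth claim is actually needed only for (b), not for (a); the rank of a stable tube is the $\tau$-period of \emph{any} module in it, not merely a mouth module.

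The substantive gap is exactly the step you flag as ``the principal technical obstacle'': you announce that you will show each $M \in \Mchi$ sits at the mouth by explicitly computing the almost split sequence $0 \to \tau M \to E \to M \to 0$ via hook/cohook combinatorics across all local configurations, but you do not carry this out. This is not a small omission: in the multiserial setting the local pictures at a vertex have unbounded valency and multiplicity, the middle terms $E$ need not be string modules, and the modules in $\Mchi^2, \Mchi^3$ are attached to $\hstep^2,\hstep^3$-type steps that have no biserial analogue, so the case analysis you gesture at is precisely where an unstructured approach would bog down or break. The paper takes a genuinely different and more efficient route (Proposition~\ref{prop:MchiMiddleTerm} via Lemma~\ref{lem:EpiImage}): rather than computing $E$ directly, it realizes each $M \in \Mchi$ as the cokernel of an irreducible monomorphism into a quotient of a projective-injective (or dually, the kernel of an irreducible epimorphism from a submodule of a projective-injective), neither of which is minimal almost split, and then applies the Brenner/Krause results (\cite{Brenner}, \cite{KernelMap}) asserting that the AR sequence ending in (respectively starting from) such a module has a single indecomposable middle term. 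This sidesteps the explicit computation of $E$ entirely and handles all four families $\Mchi^0,\ldots,\Mchi^3$ uniformly. If you intend to finish the proof along your lines, you should either supply the full hook/cohook computation for each local shape, or better, adopt the Brenner/Krause reduction.
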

	
	Unlike the symmetric special biserial setting, Green hyperwalks do not describe all of the tubes in $_s\Gamma_A$ containing string modules. One has additional tubes of rank 2 with string modules at the mouth. This occurs in many algebras that generalise the hereditary algebras of type $\Dtilde$, such as clannish algebras and skewed-gentle algebras. Section~\ref{sec:rank2} is devoted to describing these tubes in the symmetric special multiserial algebra. In particular, we define a set $\Wchi$ of strings between certain polygons in the Brauer configuration, along with involution operations $\mu_1$ and $\mu_2$ on $\Wchi$ and show the following.
	\begin{thm*}
		Let $A$ be a symmetric special multiserial algebra corresponding to a Brauer configuration $\chi$.
		\begin{enumerate}[label=(\alph*)]
			\item Let $w \in \Wchi$. Then the string module corresponding to $w$ is a mouth module of a tube of rank 2 in $_s\Gamma_A$.
			\item For any $w,w' \in \Wchi$, define an equivalence relation $w \sim w'$ if and only if $w = (\mu_1\mu_2)^i(w')$ for some $i \in\{0,1\}$. Then there exists a distinct tube $\mathcal{T}_{[w]}$ of rank 2 for each equivalence class $[w] \in {\Wchi}/{\sim}$.
		\end{enumerate}
	\end{thm*}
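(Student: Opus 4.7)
My overall approach is to compute the Auslander--Reiten translate $\tau$ on each string module $M(w)$ for $w \in \Wchi$ via string combinatorics, and then to read off the tube structure from the resulting $\tau$-orbits. Since $A$ is symmetric we have $\tau \cong \Omega^2$, so the first substantive task is to describe $\Omega$, and hence $\Omega^2$, on the string modules in question by combinatorial operations on their defining strings. The plan is then to verify that these operations are governed precisely by $\mu_1$ and $\mu_2$, and that the orbits of $\mu_1\mu_2$ match the claimed mouth structure.

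For part (a), I would proceed in three steps. First, compute $\Omega M(w)$ using the projective covers of the two simple tops of $M(w)$: in the symmetric special multiserial setting, each local summand of the syzygy corresponds to a uniserial piece of a projective indecomposable glued at a polygon of $\chi$, and the boundary behaviour of $w$ at each endpoint dictates exactly which uniserial piece appears. Second, apply $\Omega$ once more and identify the resulting string as $(\mu_1\mu_2)(w)$, where $\mu_1$ and $\mu_2$ encode the local recombination at the left and right ends of the string respectively; this uses the assumption that $w \in \Wchi$ in an essential way, since the definition of $\Wchi$ is designed so that these operations preserve the set. Third, deduce $\tau^2 M(w) \cong M(w)$ from the involutivity $(\mu_1\mu_2)^2 = \id$ on $\Wchi$, verified directly from the local combinatorics at each endpoint. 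To upgrade $M(w)$ to the mouth, I would compute the almost split sequence ending at $M(w)$ via a Butler--Ringel-type description generalised to the multiserial setting: since $w$ admits only one admissible hook addition at the canonically prescribed side (again by construction of $\Wchi$), the middle term of the sequence is indecomposable, which is the mouth condition. Combined with $\tau$-periodicity and the fact that every $\tau$-periodic non-projective module over a symmetric algebra lies in a stable tube, this places $M(w)$ at the mouth of a stable tube of rank dividing $2$; the built-in condition $(\mu_1\mu_2)(w) \neq w$, which is what distinguishes $\Wchi$ from the strings already appearing at mouths of Green-hyperwalk tubes, then forces rank exactly $2$.

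Part (b) is then bookkeeping: two mouth modules determine the same tube if and only if they lie in the same $\tau$-orbit, and by the computation above the $\tau$-orbit of $M(w)$ among the string modules of $\Wchi$ is exactly $\{M(w), M((\mu_1\mu_2)(w))\}$, which corresponds to the equivalence class $[w]$. Hence the assignment $[w] \mapsto \mathcal{T}_{[w]}$ is well defined and injective, yielding one distinct rank $2$ tube per class.

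The main obstacle I expect is the identification of $\mu_1, \mu_2$ as the combinatorial realisations of $\Omega^2$ in the multiserial setting. Unlike the biserial case, a polygon of $\chi$ can be incident with arbitrarily many strings and can carry nontrivial multiplicity, so the local analysis at each end of $w$ must track not a single hook or cohook but an entire fan of uniserial summands meeting that polygon, and the case analysis verifying $(\mu_1\mu_2)^2 = \id$ must be carried out uniformly across all such configurations. Ensuring that $\Wchi$ is precisely the set of strings compatible with this combinatorics, with no overlap with strings appearing at mouths of tubes from Green hyperwalks, will likely be the most technical part of the argument.
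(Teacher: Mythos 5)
Your outline of the $\tau$-periodicity computation --- computing $\Omega$ and $\Omega^2$ via projective covers and identifying $\Omega^2(M(w)) = M(\mu_1\mu_2(w))$ --- is essentially what the paper does (Section~\ref{sec:rank2}, the theorem on periodic projective resolutions arising from $\Wchi$), and the identity $\tau^2 M(w) \cong M(w)$ does follow from $\mu_1, \mu_2$ being commuting involutions. The issue is with your proposed route to the mouth condition.

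You propose a ``Butler--Ringel-type description generalised to the multiserial setting,'' concluding that because $w$ admits only one admissible hook addition, the middle term of the almost split sequence is indecomposable. This does not work, and not merely because of missing details: the middle term of the almost split sequence ending in $M(w)$ is \emph{not} a string module and is \emph{not} obtained from $w$ by a hook/cohook operation. It is a \emph{direct hyperstring module} $M(\bw_4)$ with $\bw_4 = (w, \mu_1\mu_2(w^{-1}))$ --- a module whose underlying vector space is $M(w) \oplus M(\mu_1\mu_2(w))$ but which carries a ``glued'' $A$-module structure where certain arrows act lower-triangularly. No hook addition produces such an object, and its indecomposability is not a consequence of string-module combinatorics. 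The paper has to develop the notion of hyperstrings (Section~\ref{sec:Hyperstrings}), prove $M(\bw)$ is indecomposable (Theorem~\ref{thm:indecHyperstring}, imported from clannish algebra theory), and then identify the almost split sequence by a different mechanism.

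That mechanism is also more involved than you anticipate. By the Auslander--Reiten formula, $\dim_K \Ext^1_A(M(w), \tau M(w)) = \dim_K \uEnd_A(M(w))$, and this dimension is typically greater than one --- the paper partitions the admissible pairs $\sap(w,w)$ into four subsets $\sap^1_w, \ldots, \sap^4_w$, each giving rise to a short exact sequence. Three of these families ($\es^1_w, \es^2_w, \es^3_w$) must be ruled out as almost split; the paper does this by constructing a clannish algebra $A_w$ of type $\Dtilde$ (possibly with one idempotent-type relation, when the two ends of $w$ lie at the same $\mathbb{D}$-triple) together with an exact functor $T_w\colon \Mod* A_w \to \Mod* A$, and showing that the known almost split sequence of $\Mod* A_w$ transports under $T_w$ to $\es^4_w$. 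The failure of the others to be almost split is then a uniqueness argument (Lemma~\ref{lem:es12NotAS}), not a direct combinatorial count of hooks. Your proposal does not confront either of these obstacles: the non-string nature of the middle term, or the need to exclude the competing exact sequences. These are the content of the proof, so as written the argument has a genuine gap rather than merely omitted bookkeeping.

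Part (b) is fine once (a) is established; your observation that mouth modules lie in the same tube iff they are in the same $\tau$-orbit, and that the $\tau$-orbit is $\{M(w), M(\mu_1\mu_2(w))\}$, matches the paper.
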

	
	Section~\ref{sec:rank2} briefly makes use of clannish combinatorics. Thus, a summary on the relevant aspects of clannish combinatorics required to prove the results in Section~\ref{sec:rank2} is provided in Appendix~\ref{sec:AppClans}. In particular, we explain how symmetric clannish band modules, which are typically defined over a clannish algebra (given by a quotient of a path algebra by an \emph{inadmissible ideal}), can be obtained in the module category of a path algebra modulo an \emph{admissible} ideal that is isomorphic to a clannish algebra. Certain subclasses of these clannish modules give rise to what we call \emph{hyperstrings} in special multiserial algebras (c.f. Section~\ref{sec:Hyperstrings}).
	
	\subsection{Notation}
	Throughout the paper, we let $Q=(Q_0,Q_1, s,t)$ be a quiver with vertex set $Q_0$, arrow set $Q_1$, and source and target functions $s$ and $t$. We let $K$ be an algebraically closed field and  let $I$ be an admissible ideal of the path algebra $KQ$ such that $KQ/I$ is a basic finite-dimensional algebra. We denote the stationary path in $KQ/I$ at a vertex $x$ by $\varepsilon_x$. We let $\Mod*A$ denote the category of finitely generated right modules and denote the stable Auslander-Reiten quiver of $A$ by $_s\Gamma_A$. Additional notation will be defined when needed. However for the benefit of the reader, a glossary of notation is provided in Appendix~\ref{ap:Glossary}.
	
	\section{Special Multiserial Algebras}
	As the central topic of this paper, we will begin with a review of the relevant definitions of special multiserial algebras and Brauer configurations. Our definitions are primarily sourced from \cite{Multiserial, BCA}, as well as previous works by the named author (for example, \cite{DuffieldTameWild}).
	
	\begin{defn}
		We say a (left or right) module $M$ is \emph{uniserial} if $\rad^i M / \rad^{i+1}M$ is simple or zero for all $i$, where $\rad M$ is the (left or right) Jacobson radical of $M$.
	\end{defn}
	\begin{defn}
		We say a (left or right) module $M$ is \emph{multiserial} (or more specifically, \emph{$n$-serial}) if the (left or right) Jacobson radical of the module is a sum of uniserial modules $U_1, \ldots, U_n$ such that $U_i \cap U_j$ is simple or zero for each $i \neq j$. We say an algebra $A$ is \emph{multiserial} if it is multiserial as a left and right $A$-module.
	\end{defn}
	
	The terms \emph{biserial}, \emph{triserial} and \emph{quadserial} are sometimes used for when $n=2,3,4$ in the above definition, respectively. An interesting subclass of multiserial algebras are the special multiserial algebras.
	
	\begin{defn}
		We say an algebra $A$ is \emph{special multiserial} if it is Morita equivalent to an algebra $KQ/I$ for some algebraically closed field $K$, quiver $Q$, and admissible ideal $I$ such that the following property holds.
		\begin{enumerate}[label=(SM\arabic*)]
			\item For any arrow $\beta \in Q_1$, there exists at most one arrow $\gamma \in Q_1$ such that $\beta \gamma \not \in I$ and at most one arrow $\alpha \in Q_1$ such that $\alpha\beta \not \in I$.
		\end{enumerate}
		We say $A$ is \emph{special $n$-serial} if in addition, the following is satisfied.
		\begin{enumerate}[resume, label=(SM\arabic*)]
			\item For each vertex $x\in Q_0$, there are at most $n$ arrows in $Q_1$ of source $x$ and at most $n$ arrows $Q_1$ of target $x$.
		\end{enumerate}
	\end{defn}
	
	\subsection{Brauer Configurations}
	Brauer configurations generalise the notion of Brauer graphs, a form of decorated ribbon graph. We will outline the definition in this subsection. To begin with we will define the notion of a configuration, which is equivalent to the definition of a hypergraph.
	
	\begin{defn}
		A \emph{configuration} is a tuple $\chi=(\mathcal{V}_\chi, \mathcal{G}_\chi, \mathcal{P}_\chi, \kappa)$ satisfying the following.
		\begin{enumerate}[label=(\roman*)]
			\item $\mathcal{V}_\chi$ is a finite set, whose elements are called \emph{vertices}.
			\item $\mathcal{G}_\chi$ is a finite set, whose elements are called \emph{germs of polygons} (or shortly, \emph{germs}).
			\item $\mathcal{P}_\chi$ is a finite set of subsets of $\mathcal{G}_\chi$. Each $x \in \mathcal{P}_\chi$ is called a \emph{polygon}, and more specifically an \emph{$n$-gon} if $|x|=n$. We require that $|x|\geq 2$ for all $x \in \mathcal{P}_\chi$, and that $x \cap y = \emptyset$ for any distinct $x,y \in \mathcal{P}_\chi$. Moreover, we require that $\bigcup_{x \in \mathcal{P}_\chi} x = \mathcal{G}_\chi$.
			\item $\kappa\colon \mathcal{G}_\chi \rightarrow \mathcal{V}_\chi$ is a function, which can be thought of as a map taking each germ of a polygon to an incident vertex.
		\end{enumerate}
	\end{defn}
	Essentially, we have an equivalence relation $\sim$ on $\mathcal{G}_\chi$ defined such that $g_1 \sim g_2$ if and only if $g_1,g_2 \in x$ for some $x \in \mathcal{P}_\chi$. Thus, polygons can be viewed as (defined) equivalence classes of germs. We will thus define a map $\pi\colon\mathcal{G}_\chi \rightarrow \mathcal{P}_\chi$, which maps each germ of a polygon to its respective polygon.
	
	A configuration can be thought of as a generalisation of a graph in the sense that we have vertices and connected polygons rather than vertices and connected edges. If all polygons in $\chi$ are 2-gons, then $\chi$ is indeed a graph, in which case, one could instead equivalently replace the set $\mathcal{P}_\chi$ with an involutive operation without fixed points (as is often done in the literature when defining a ribbon graph).
	
	We call a polygon $x \in \mathcal{P}_\chi$ \emph{self-folded} if there exist germs $g, g' \in x$ such that $\kappa(g)=\kappa(g')$. We define the \emph{valency} of a vertex $v \in \mathcal{V}_\chi$ to be the value
	\begin{equation*}
		\val(v) = |\{ g \in \mathcal{G}_\chi : \kappa(g)=v\}|,
	\end{equation*}
	which can be thought of as the number of germs of polygons incident to $v$.
	
	In specific examples, we will occasionally use the following notation regarding germs of polygons. If for some $x \in \mathcal{P}_\chi$ there exists a unique germ $g\in x$ such that $\kappa(g)=v$ (for some $v \in \mathcal{V}_\chi$), then we will write $g$ as $x^v$. Otherwise, if there exist precisely $m$ germs $g_1, \ldots, g_m \in x$ such that $\kappa(g_i)=v$ for all $i$ (and some $v\in \mathcal{V}_\chi$), then we will write each $g_i$ as $x^{v,i}$.
	
	We call a configuration $\chi'=(\mathcal{V}_\chi', \mathcal{G}_\chi', \mathcal{P}_\chi', \kappa')$ a \emph{subconfiguration} of a configuration $\chi=(\mathcal{V}_\chi, \mathcal{G}_\chi, \mathcal{P}_\chi, \kappa)$, written as $\chi' \subseteq \chi$, if $\mathcal{V}_\chi' \subseteq \mathcal{V}_\chi$, $\mathcal{P}_\chi' \subseteq \mathcal{P}_\chi$ and $\kappa'(g')=\kappa(g')$ for all $g' \in \mathcal{G}_\chi'$. (The fact that $\mathcal{G}_\chi' \subseteq \mathcal{G}_\chi$ follows from $\mathcal{P}_\chi' \subseteq \mathcal{P}_\chi$.)
	
	\begin{defn}
		A \emph{Brauer configuration} is a tuple $\chi=(\mathcal{V}_\chi, \mathcal{G}_\chi, \mathcal{P}_\chi, \kappa, \mathfrak{o}, \mathfrak{m})$ which satisfies the following properties.
		\begin{enumerate}[label=(\roman*)]
			\item $(\mathcal{V}_\chi, \mathcal{G}_\chi, \mathcal{P}_\chi, \kappa)$ is a configuration.
			\item $\mathfrak{o}$ is a cyclic ordering on the germs of polygons incident to each vertex, where for any distinct vertices $u,v \in \mathcal{V}_\chi$, no germ incident to $u$ is in the same cycle as a germ incident to $v$.
			\item $\mathfrak{m}\colon\mathcal{V}_\chi \rightarrow \mathbb{Z}_{>0}$ is a function called the multiplicity function.
			\item For any $x \in \mathcal{P}_\chi$ such that $|x|>2$, there exists no germ $g \in x$ such that $\val(\kappa(g))=1$ and $\mathfrak{m}(\kappa(g))=1$.
		\end{enumerate}
		A Brauer configuration that is a graph is called a \emph{Brauer graph}.
	\end{defn}
	
	Given germs $g, g' \in \mathcal{G}_\chi$ such that $\kappa(g)=\kappa(g')$, we say $g'$ is the \emph{successor} to $g$ if $g'$ directly follows $g$ in the cyclic ordering at $\kappa(g)$. In this case, we also say that $g$ is the \emph{predecessor} to $g'$. We define a map $\sigma\colon\mathcal{G}_\chi \rightarrow \mathcal{G}_\chi$ which maps each germ of a polygon to its successor under $\mathfrak{o}$. Similarly, we define $\sigma^{-1}\colon\mathcal{G}_\chi \rightarrow \mathcal{G}_\chi$ to be the map taking each germ of a polygon to its predecessor under $\mathfrak{o}$.
	
	We say a vertex $v \in \mathcal{V}_\chi$ is \emph{truncated} if $\val(v)=1$ and $\mathfrak{m}(v)=1$. We say a polygon $x \in \mathcal{P}_\chi$ is \emph{truncated} if it is incident to a truncated vertex. By property (iv) of the definition of a Brauer configuration, every truncated polygon of $\chi$ is an edge.
	
	\subsection{Brauer Configuration Algebras}
	From a Brauer configuration $\chi$, we obtain a finite-dimensional algebra $A$. In the exceptional case where $\chi$ is a graph consisting of a single non-loop edge with $\mathfrak{m} \equiv 1$, we define $A=K[x]/(x^2)$. Otherwise, $A = KQ/I$ is a quotient algebra of a path algebra determined as follows. Firstly, the vertices of $Q$ are in bijective correspondence with the polygons of $\chi$. Secondly, for each germ $g \in \mathcal{G}_\chi$ such that $\kappa(g)$ is non-truncated, there exists an arrow $\pi(g) \rightarrow \pi(\sigma(g))$ in $Q$. Thus, each non-truncated vertex in $\chi$ determines a cycle of arrows in $Q$, and no two such distinct cycles share a common arrow. For each non-truncated $v \in \mathcal{V}_\chi$, denote by $\mathfrak{C}_v$ the cycle of arrows in $Q$ (up to cyclic permutation/rotation) generated by the vertex $v$. We denote by $\mathfrak{C}_{v, \alpha}$ the specific rotation of $\mathfrak{C}_v$ such that the first arrow of the cycle is $\alpha$.
	
	We define a set of relations $\rho$ from $\chi$ in the following way. For each truncated edge $e \in \mathcal{P}_\chi$ connected to a non-truncated vertex $v \in \mathcal{V}_\chi$, we say there exists a relation $\mathfrak{C}_{v, \alpha}^{\mathfrak{m}(v)} \alpha \in \rho$, where $\alpha$ is the (unique) arrow such that $s(\alpha) = e$. For any other (non-truncated) polygon $x \in \mathcal{P}_\chi$ that is connected to some (not necessarily distinct) vertices $u,v \in \mathcal{V}_\chi$, we say there exists a relation $\mathfrak{C}_{u, \beta}^{\mathfrak{m}(u)}-\mathfrak{C}_{v, \gamma}^{\mathfrak{m}(v)} \in \rho$ for any distinct arrows $\beta$ and $\gamma$ such that $s(\beta)=x=s(\gamma)$. The remaining relations of $\rho$ consist of all length two paths $\delta\zeta$ in $Q$ such that $\delta\zeta$ is not a subpath of $\mathfrak{C}_v$ for any non-truncated vertex $v \in \mathcal{V}_\chi$.
	
	\begin{defn}
		Let $\chi$ be a Brauer configuration. Let $Q$ be the quiver and $\rho$ be the set of relations corresponding to $\chi$ given by the construction above. Let $I$ be the ideal of the path algebra $KQ$ generated by the relations $\rho$. We call the algebra $A=KQ/I$ the \emph{Brauer configuration algebra} corresponding to the Brauer configuration $\chi$.
	\end{defn}	
		
	By the work of Green and Schroll (c.f. \cite{BCA} and \cite{Multiserial}), the class of Brauer configuration algebras coincides with the class of symmetric special multserial algebras, and thus these terms may be used interchangeably.
	
	One can see from the definition of a Brauer configuration algebra that the polygons of the corresponding Brauer configuration $\chi$ are in bijection with the simple modules of the algebra. The same is true for the indecomposable projective modules. Motivated by this fact, we respectively denote by $S(x)$ and $P(x)$ the simple modules and indecomposable projective modules corresponding to a polygon $x \in \mathcal{P}_\chi$.
	
	To understand the representation theory of Brauer configuration algebras, it is sufficient to consider those that arise from connected Brauer configurations. This is because a union of disconnected Brauer configurations gives rise to a product of corresponding Brauer configuration algebras (\cite[Theorem A]{BCA}). Consequently, we will assume all Brauer configurations in this paper are connected.
	
	\section{Strings and Bands} \label{sec:StringsBands}
	String and band modules were first defined in the context of special biserial (2-serial) algebras, as they provide a complete classification of the indecomposable modules over these algebras (c.f. \cite{butlerRingel}, \cite{FuncFilt} and \cite{WaldWasch}). In the multiserial case, not all indecomposable modules are string and band modules, but these are nevertheless an incredibly useful class of indecomposable modules to work with. Since we use both strings, bands, and their respective modules later in the paper, we provide a brief outline of their definition here. Throughout, we let $A=KQ/I$.
	
	For each arrow $\alpha \in Q_1$, we define a symbol $\alpha^{-1}$ called the \emph{formal inverse} of $\alpha$, which is such that $s(\alpha^{-1})=t(\alpha)$ and $t(\alpha^{-1})=s(\alpha)$. We denote the set of all formal inverses of $Q_1$ by $Q_1^{-1}$. In the context where $A$ is a Brauer configuration algebra corresponding to a Brauer configuration $\chi$, one can view $s(\alpha)$ and $t(\alpha)$ as polygons in $\chi$ for any $\alpha \in Q_1 \cup Q^{-1}_1$. In addition, since each germ $g \in \mathcal{G}_\chi$ with $\kappa(g)$ non-truncated corresponds to precisely one arrow $\alpha_g:\pi(g) \rightarrow \pi(\sigma(g))$ in $Q$, one can define surjective maps
	\begin{align*}
		\widehat{s}\colon &Q_1 \cup Q^{-1}_1 \rightarrow \mathcal{G}_\chi \setminus \{g : \kappa(g) \text{ truncated} \} \\
		\widehat{t}\colon &Q_1 \cup Q^{-1}_1 \rightarrow \mathcal{G}_\chi \setminus \{g : \kappa(g) \text{ truncated} \}
	\end{align*}
	by $\widehat{s}(\alpha_g)=g=\widehat{t}(\alpha^{-1}_g)$ and $\widehat{t}(\alpha_g)=\sigma(g)=\widehat{s}(\alpha^{-1}_g)$. These can be viewed as the germs of polygons at the source and target of the arrow $\alpha_g$. As a remark, it is easy to see that the restrictions $\widehat{s}|_{Q_1}$, $\widehat{s}|_{Q^{-1}_1}$, $\widehat{t}|_{Q_1}$ and $\widehat{t}|_{Q^{-1}_1}$ are bijective, and thus offer more precise information regarding the source or target of an arrow or formal inverse than the functions $s$ and $t$.
	
	A \emph{word} of length $n$ is a concatenation of $n$ letters $\alpha_1\ldots\alpha_n$ such that each $\alpha_i \in Q_1 \cup Q_1^{-1}$. A \emph{string} of length $n$ is a word $w=\alpha_1\ldots\alpha_n$ such that $t(\alpha_i)=s(\alpha_{i+1})$ and $\alpha_{i} \neq \alpha_{i+1}\inv$ for each $i$ and such that $w$ avoids the relations generating $I$. We define $|w|=n$. Typically, one also defines the existence of strings of length zero (that is, strings such that $|w|=0$). We refer the reader to \cite{butlerRingel} for their precise definition, however we remark that any stationary path in a quiver may be considered as a zero string, and that non-zero strings may be concatenated with appropriate zero strings in the natural way. We denote by $\str_A$ the set of all strings associated to $A$.
	
	A subword of a string $w$ is called a \emph{substring} of $w$. Given a string $w=\alpha_1\ldots\alpha_n$, we define $w^{-1}=\alpha_n^{-1}\ldots\alpha_1^{-1}$. We naturally extend the definition of the source and target functions $s$ and $t$ to strings by setting $s(w)=s(\alpha_1)$ and $t(w)=t(\alpha_n)$. Similarly, we extend the germ source and target functions $\widehat{s}$ and $\widehat{t}$ to non-zero strings by setting $\widehat{s}(w)=\widehat{s}(\alpha_1)$ and $\widehat{t}(w)=\widehat{t}(\alpha_n)$.
	
	A string $w=\alpha_1\ldots\alpha_n$ is said to be \emph{direct} if every $\alpha_i \in Q_1$ and is said to be \emph{inverse} if every $\alpha_i \in Q_1^{-1}$. We say a direct (resp. inverse) string $w$ is \emph{maximal} if there exists no (non-zero) direct (resp. inverse) string $w'$ such that $w w'$ is a string.
	
	A \emph{band} $b=\beta_1\ldots\beta_n$ is a cyclic string such that neither $b$ (nor any cyclic permutation/rotation of $b$) is a proper power of some string $w$. We denote by $\band_A$ the set of all bands associated to $A$.
	
	For each string $w=\alpha_1\ldots\alpha_n$, there exists a corresponding indecomposable $A$-module $M(w)$ called a \emph{string module}. The vector space $M(w)$ is defined to have a canonical basis $\{b_0, \ldots, b_n\}$. This is then given an $A$-module structure by defining the action of a stationary path or arrow $\beta\in A$ on $M(w)$ by
	\begin{equation*}
		b_i \beta =
		\begin{cases}
			b_{i-1}	&	\text{if } \beta = \alpha_i\inv					\\
			b_i			&	\text{if } \beta = \varepsilon_{t(\alpha_i)} \text{ or } \beta = \varepsilon_{s(\alpha_{i+1})}	\\
			b_{i+1}	&	\text{if } \beta = \alpha_{i+1}					\\
			0			&	\text{otherwise},
		\end{cases}
	\end{equation*}
	where $\varepsilon_{t(\alpha_i)}$ and $\varepsilon_{s(\alpha_{i+1})}$ denote the stationary path in $KQ$ at the vertex $t(\alpha_i)=s(\alpha_{i+1})$. In addition, for each band $b$, one obtains an infinite family of indecomposable $A$-modules $M(b,m,\phi)$ called \emph{band modules}, where $m \in \mathbb{Z}_{>0}$ and $\phi \in \Aut(K^m)$. We refer the reader to \cite{butlerRingel} for the precise construction of band modules, however we remark that $M(b,m,\phi) \cong M(b',m,\phi)$ for any rotation $b'$ of a band $b$.
	
	Maps between string and modules were completely described by Crawley-Boevey and Krause in \cite{CBMaps} and \cite{TreeMaps}. We will summarise the results for string modules here. Consider strings $w=\alpha_1\ldots\alpha_m$ and $w'=\beta_1\ldots\beta_n$. A substring $\alpha_i\alpha_{i+1}\ldots \alpha_j$ of $w$ is called a \emph{factor substring} of $w$ if
	\begin{enumerate}[label=(\roman*)]
		\item either $i=1$ or $\alpha_{i-1} \in Q\inv_1$; and
		\item either $j=m$ or $\alpha_{j+1} \in Q_1$.
	\end{enumerate}
	A substring $\beta_k\beta_{k+1}\ldots\beta_l$ of $w'$ is called an \emph{image substring} of $w'$ if
	\begin{enumerate}[label=(\roman*)]
		\item either $k=1$ or $\beta_{k-1} \in Q_1$; and
		\item either $l=n$ or $\beta_{l+1} \in Q\inv_1$.
	\end{enumerate}
	Given a pair of strings $(w,w')$ with substrings $w_+$ and $w_-$ of $w$ and $w'$ respectively, the pair of substrings $(w_+,w_-)$ is said to be \emph{admissible with respect to} $(w,w')$ if
	\begin{enumerate}[label=(\roman*)]
		\item $w_+$ is a factor substring of $w$,
		\item $w_-$ is an image substring of $w'$, and
		\item $w_+=w_-$ or $w_+ = w_-\inv$.
	\end{enumerate}
	A basis of $\Hom_A(M(w),M(w'))$ is indexed by the set
	\begin{equation*}
		\{(w_+,w_-) : (w_+,w_-) \text{ admissible with respect to } (w,w')\}.
	\end{equation*}
	Given any strings $w,w' \in \str_A$, we will denote the set of admissible pairs of $(w,w')$ by $\ap(w,w')$. If $f \in \Hom_A(M(w),M(w'))$ corresponds to $(w_+,w_-) \in \ap(w,w')$, then $\im f \cong M(w_-)$ and $M(w) / \Ker f \cong M(w_+)$. In particular, if $w=\alpha_1\ldots\alpha_m$ and $w'=\beta_1\ldots\beta_n$, and $M(w)$ and $M(w')$ have canonical bases $\{b_0, \ldots, b_m\}$ and $\{b'_0, \ldots, b'_n\}$ respectively, then the map $f \in \Hom_A(M(w),M(w'))$ corresponding to
	\begin{equation*}
		(\alpha_i\ldots \alpha_{i+j},\beta_k\ldots\beta_{k+j}) \in \ap(w,w')
	\end{equation*}
	is such that 
	\begin{equation*}
		f(b_r)=
		\begin{cases}
			b'_{r-i+k}	&	\text{if } i-1 \leq r \leq i+j, \\
			0			&	\text{otherwise.}
		\end{cases}
	\end{equation*}
	
	\section{Green Hyperwalks in Brauer Configurations} \label{sec:Hyperwalks}
	In this section, we will define a combinatorial device called a Green hyperwalk. This is a hyper-ribbon graph generalisation of the incredibly powerful Green walk (named after J. A. Green), defined in \cite{GreenWalk} for Brauer trees and used in \cite{DuffieldBGA} and \cite{Roggenkamp} for Brauer graphs. We will see in the next section that this generalisation has similar applications to symmetric special multiserial algebras as the Green walks do to symmetric special biserial algebras. Since this is a rather combinatorial construction, we will provide numerous examples throughout to help provide the reader with a good intuition of the definitions and subsequent results. As for notation, $\chi=(\mathcal{V}_\chi, \mathcal{G}_\chi, \mathcal{P}_\chi, \kappa, \mathfrak{o}, \mathfrak{m})$ is taken to be a connected Brauer configuration for the entirety of this section.
	\begin{defn} \label{def:HyperwalkSteps}
		We say a nonempty proper subset $X\subset\mathcal{G}_\chi$ is a \emph{Green hyperwalk step} of $\chi$ if $X$ satisfies precisely one of the following properties.
		\begin{enumerate}[label=(G\arabic*)]
			\item $|X|=1$
			\item $|X|=2$, $X \subset x$ is a proper subset for some $x \in \mathcal{P}_\chi$, and for both $g \in X$, we have $\mathfrak{m}(\kappa(g))=1$, $\val(\kappa(g))=2$ and that $\pi(\sigma(g))$ is a truncated edge.
			\item $X= \{g, g'\}$, where $\{\sigma(g),\sigma(g')\}$ satisfies (G2).
		\end{enumerate}
	\end{defn}
	
	We denote by $\hstep$ the set of all Green hyperwalk steps of $\chi$. Trivially, if an element of $\hstep$ is of type (G1), then $X \subset x$ for some $x \in \mathcal{P}_\chi$. We also remark that if $X=\{g,g'\}$ satisfies (G3), then necessarily $\pi(g)$ and $\pi(g')$ are distinct truncated edges that are both connected to a polygon $x$ via (distinct) vertices $\kappa(g)$ and $\kappa(g')$ respectively. It is easy to deduce from this that the elements of $\hstep$ of type (G2) are in bijective correspondence with the elements of type (G3). It is also easy to see that (G1), (G2) and (G3) are mutually exclusive properties. Thus, we will denote by $\hstep^1$, $\hstep^2$ and $\hstep^3$ the disjoint subsets of $\hstep$ consisting of elements of type (G1), (G2) and (G3) respectively.
	
	\begin{exam}
		Consider the Brauer configuration $\chi$ given by
		\begin{center}
			\begin{tikzpicture}
			\draw [fill=black] (-1.3,0) ellipse (0.05 and 0.05);
			\draw [fill=black] (0.5,0.5) ellipse (0.05 and 0.05);
			\draw [fill=black] (0.5,-0.5) ellipse (0.05 and 0.05);
			\draw [fill=black] (-0.5,0) ellipse (0.05 and 0.05);
			\draw [pattern=north west lines,pattern color=gray](0.5,0.5) -- (-0.5,0) -- (0.5,-0.5) to [bend left=45] (0.5,0.5);
			\draw (0.5,0.5) to [bend left=45] (0.5,-0.5);
			\draw (-1.3,0) -- (-0.5,0);
			\draw (0.5,0.5) -- (1.5,0.3);
			\draw (0.5,-0.5) -- (1.5,-0.3);
			\draw [pattern=north west lines,pattern color=gray](1.5,0.3) -- (1.5,-0.3) -- (2.1,-0.7) -- (2.7,-0.3) -- (2.7,0.3) -- (2.1,0.7)-- (1.5,0.3);
			\draw [fill=black] (1.5,0.3) ellipse (0.05 and 0.05);
			\draw [fill=black] (2.7,-0.3) ellipse (0.05 and 0.05);
			\draw [fill=black] (2.1,-0.7) ellipse (0.05 and 0.05);
			\draw [fill=black] (1.5,-0.3) ellipse (0.05 and 0.05);
			\draw (3.4,-0.8) -- (2.7,-0.3);
			\draw (2.1,-0.7) -- (2.1,-1.5);
			\draw [fill=black] (3.4,-0.8) ellipse (0.05 and 0.05);
			\draw [fill=black] (2.1,-1.5) ellipse (0.05 and 0.05);
			\draw [fill=black] (2.7,0.3) ellipse (0.05 and 0.05);
			\draw [fill=black] (3.4,0.8) ellipse (0.05 and 0.05);
			\draw [fill=black] (2.1,0.7) ellipse (0.05 and 0.05);
			\draw [fill=black] (2.1,1.5) ellipse (0.05 and 0.05);
			\draw (2.1,1.5) -- (2.1,0.7);
			\draw (2.7,0.3) -- (3.4,0.8);
			
			\draw (0,0) node {\footnotesize $x$};
			\draw (2.1,0) node {\footnotesize $z$};
			\draw (-0.9,0.2) node {\footnotesize $y_1$};
			\draw (0.9,0) node {\footnotesize $y_2$};
			\draw (1.1,0.6) node {\footnotesize $y_3$};
			\draw (1.1,-0.6) node {\footnotesize $y_4$};
			\draw (1.8,-1.1) node {\footnotesize $y_5$};
			\draw (2.9,-0.7) node {\footnotesize $y_6$};
			\draw (3.2,0.4) node {\footnotesize $y_7$};
			\draw (2.4,1.1) node {\footnotesize $y_8$};
			\draw (-0.5,-0.2) node {\footnotesize $v_1$};
			\draw (0.5,-0.7) node {\footnotesize $v_2$};
			\draw (0.5,0.7) node {\footnotesize $v_3$};
			\draw (1.5,0.5) node {\footnotesize $v_4$};
			\draw (1.5,-0.5) node {\footnotesize $v_5$};
			\draw (2.3,-0.8) node {\footnotesize $v_6$};
			\draw (2.9,-0.2) node {\footnotesize $v_7$};
			\draw (2.7,0.5) node {\footnotesize $v_8$};
			\draw (1.9,0.8) node {\footnotesize $v_9$};
			\draw (-1.3,-0.2) node {\footnotesize $u_1$};
			\draw (2.4,-1.5) node {\footnotesize $u_2$};
			\draw (3.5,-0.6) node {\footnotesize $u_3$};
			\draw (3.2,1) node {\footnotesize $u_4$};
			\draw (1.9,1.5) node {\footnotesize $u_5$};
			\end{tikzpicture}
		\end{center}
		with $\mathfrak{m}(u_3),\mathfrak{m}(v_6)>1$ and $\mathfrak{m}(t)=1$ for any vertex $t \neq u_3,v_6$. Thus, the edges $y_1$, $y_7$ and $y_8$ are truncated and all other polygons are non-truncated. The subsets $\{x^{v_1}\}$, $\{y_2^{v_3}\}$ and $\{z^{v_9}\}$ of $\mathcal{G}_\chi$ are examples of elements in $\hstep^1$. There is only one element in $\hstep^2$, which is $\{z^{v_8}, z^{v_9}\}$. Similarly, there is only one element in $\hstep^3$, which corresponds to the unique $\hstep^2$-step by moving to the next germs in the successor sequence around the vertices $v_8$ and $v_9$. Namely, this is the Green hyperwalk step $\{y_7^{v_8}, y_8^{v_9}\}$.
		
		As for non-examples, it is trivially the case that any subset $X \subset \mathcal{G}_\chi$ such that $|X|>1$ is not a Green hyperwalk step of type (G1), and it is not a Green hyperwalk step at all if $|X|>2$. The following subsets of $\mathcal{G}_\chi$ are not of type (G2) or (G3) for different reasons.
		\begin{enumerate}[label=(\roman*)]
			\item $\{x^{v_1}, x^{v_2}\}$, since $\val(v_2) \neq 2$.
			\item $\{z^{v_6}, z^{v_8}\}$, since $\mathfrak{m}(v_6)>1$.
			\item $\{z^{v_7}, z^{v_8}\}$, since $\pi(\sigma(z^{v_7}))=y_6$ is not a truncated edge (so $\{z^{v_7}, z^{v_8}\} \not\in \hstep^2$) and neither is $\pi(\sigma^2(z^{v_7}))=\pi(\sigma^2(z^{v_8}))=z$, which implies that $\{\sigma(z^{v_7}),\sigma(z^{v_8})\} \not\in \hstep^2$  (so $\{z^{v_7}, z^{v_8}\} \not\in \hstep^3$).
			\item $\{y_1^{u_1}, y_1^{v_1}\}$, since (amongst other reasons) this is not a proper subset of a polygon (so not in $\hstep^2$), and neither is $\{\sigma(y_1^{u_1}), \sigma(y_1^{v_1})\}$ (so not in $\hstep^3$).
		\end{enumerate}
	\end{exam}

	\begin{defn} \label{HyperwalkDefn}
		Let $X_0 \in \hstep$. A \emph{Green hyperwalk} of $\chi$ from $X_0$ is a sequence $(X_i)_{i \in \mathbb{Z}_{\geq 0}}$ of subsets of $\mathcal{G}_\chi$ defined as follows. If $X_i=\emptyset$ then define $X_{i+1}=\emptyset$. Otherwise, we proceed with the following construction.
		\begin{enumerate}[label=(\roman*)]
			\item If $X_i \in \hstepone \cup \hsteptwo$ with $X_i \subset x \in \mathcal{P}_\chi$, then define a set $Y_i=\{\sigma(g): g \in x \setminus X_i\}$.
			\item If $X_i\in \hstepthree$, then define a set $Y_i=\{\sigma(g) : g \in X_i\}$.
		\end{enumerate}
		In either case, if $Y_i \in \hstep$, then define $X_{i+1}=Y_i$. Otherwise, define $X_{i+1} = \emptyset$.
		
		A Green hyperwalk $(X_i)_{i \in \mathbb{Z}_{\geq 0}}$ is said to \emph{terminate} if there exists an integer $k>0$ such that $X_k= \emptyset$. A Green hyperwalk is said to be \emph{periodic} if there exists an integer $k$ such that $X_i = X_{i+k}$ for all $i \geq 0$. The \emph{period} of a periodic Green hyperwalk is the minimal value of $k$ such that $X_i = X_{i+k}$ for all $i \geq 0$.
	\end{defn}
	
	\begin{rem} \label{GWPeriodic}
		A Green hyperwalk step $X \in \hstep^2 \cup \hstep^3$ necessarily implies the existence of an $n$-gon with $n>2$. Thus if $\chi$ is a Brauer graph, then any Green hyperwalk step $X$ is such that $X \in \hstep^1$. In this case, every Green hyperwalk is in fact a Green walk (in the sense of \cite{GreenWalk}) and hence periodic (c.f. \cite{Roggenkamp}).
	\end{rem}
	
	\begin{rem}	\label{GWG3}
		Given a Green hyperwalk $(X_i)$, if there exists an integer $j$ such that $X_j \in \hstep^3$, then we necessarily have $X_{j+1} \neq \emptyset$. This is a trivial consequence of the definitions.
	\end{rem}
	
	\begin{exam} \label{ex:GWs}
		Let $\chi$ be the Brauer configuration given by
		\begin{center}
			\begin{tikzpicture}
			\draw [pattern=north west lines, pattern color=gray](-2,-0.5) -- (-2,0.5) -- (-1,0) -- (-2,-0.5);
			\draw (-2,0.5) -- (-2.4,0.9);
			\draw (-2,-0.5) -- (-2.4,-0.9);
			\draw  (-2,0) ellipse (1 and 1.5);
			\draw [pattern=north west lines,pattern color=gray](-3,0) -- (-3.5,0.5) -- (-4,0) -- (-3.5,-0.5) -- (-3,0);
			\draw (-1,0) -- (0.1,0);
			\draw [fill=black] (0.1,0) ellipse (0.05 and 0.05);
			\draw [fill=black] (-1,0) ellipse (0.05 and 0.05);
			\draw [fill=black] (-2,0.5) ellipse (0.05 and 0.05);
			\draw [fill=black] (-2.4,0.9) ellipse (0.05 and 0.05);
			\draw [fill=black] (-2,-0.5) ellipse (0.05 and 0.05);
			\draw [fill=black] (-2.4,-0.9) ellipse (0.05 and 0.05);
			\draw [fill=black] (-3,0) ellipse (0.05 and 0.05);
			\draw [fill=black] (-3.5,0.5) ellipse (0.05 and 0.05);
			\draw [fill=black] (-3.5,-0.5) ellipse (0.05 and 0.05);
			\draw [fill=black] (-4,0) ellipse (0.05 and 0.05);
			\draw [fill=black] (-2,-1.5) ellipse (0.05 and 0.05);
			\draw (-1.6,0) node {\footnotesize $x$};
			\draw (-0.4,0.2) node {\footnotesize $y_1$};
			\draw (-2,0.8) node {\footnotesize $y_2$};
			\draw (-2.3,-0.5) node {\footnotesize $y_3$};
			\draw (-1.2,1.3) node {\footnotesize $y_6$};
			\draw (-1,-1) node {\footnotesize $y_4$};
			\draw (-2.9,-1.1) node {\footnotesize $y_5$};
			\draw (-3.5,0) node {\footnotesize $z$};
			\draw (-3.5,0.7) node {\footnotesize $u_1$};
			\draw (-4.3,0) node {\footnotesize $u_2$};
			\draw (-3.5,-0.7) node {\footnotesize $u_3$};
			\draw (0.1,-0.2) node {\footnotesize $v_1$};
			\draw (-0.8,-0.2) node {\footnotesize $v_2$};
			\draw (-2.2,0.4) node {\footnotesize $v_3$};
			\draw (-2.5,0.7) node {\footnotesize $v_4$};
			\draw (-1.8,-0.6) node {\footnotesize $v_5$};
			\draw (-2.1,-1) node {\footnotesize $v_6$};
			\draw (-2,-1.3) node {\footnotesize $v_7$};
			\draw (-2.7,0) node {\footnotesize $v_8$};
			\end{tikzpicture}
		\end{center}
		with $\mult(u_i)>1$ for all $i$ and $\mult(v_i)=1$ for $3 \leq i \leq 6$. The Green hyperwalk starting from $X_0=\{y_4^{v_2}\}$ is periodic. Specifically, the sequence is given by
		\begin{equation*}
			(\{y_4^{v_2}\}, \{y_5^{v_7}\}, \{y_6^{v_8}\}, \{x^{v_2}\}, \{y_2^{v_3},y_3^{v_5}\}, \{x^{v_3},x^{v_5}\}, \{y_4^{v_2}\},\{y_5^{v_7}\}, \{y_6^{v_8}\},\ldots).
		\end{equation*}
		This sequence may be presented diagramatically, as in Figure~\ref{GreenHyperwalkEg}. An example of a terminating Green hyperwalk is given by $X_0=\{y_1^{v_2}\}$. The sequence is then
		\begin{equation*}
			(\{y_1^{v_2}\}, \{y_1^{v_1}\}, \{y_6^{v_2}\}, \{z^{v_8}\}, \emptyset,\ldots).
		\end{equation*}
		Following the notation of Definition~\ref{HyperwalkDefn}, the reason why the 5th step is the empty set is because $Y_5=\{z^{u_1},z^{u_2},z^{u_3}\} \not\in \hstep$.
		\begin{figure}[h] 
			\centering
			\usetikzlibrary{patterns}
			\begin{tikzpicture}
				\draw [pattern=north west lines, pattern color=gray](-2,-0.5) -- (-2,0.5) -- (-1,0) -- (-2,-0.5);
				\draw (-2,0.5) -- (-2.4,0.9);
				\draw (-2,-0.5) -- (-2.4,-0.9);
				\draw  (-2,0) ellipse (1 and 1.5);
				\draw [pattern=north west lines, pattern color=gray](-3,0) -- (-3.5,0.5) -- (-4,0) -- (-3.5,-0.5) -- (-3,0);
				\draw (-1,0) -- (0.1,0);
				\draw [fill=black] (0.1,0) ellipse (0.05 and 0.05);
				\draw [fill=black] (-1,0) ellipse (0.05 and 0.05);
				\draw [fill=black] (-2,0.5) ellipse (0.05 and 0.05);
				\draw [fill=black] (-2.4,0.9) ellipse (0.05 and 0.05);
				\draw [fill=black] (-2,-0.5) ellipse (0.05 and 0.05);
				\draw [fill=black] (-2.4,-0.9) ellipse (0.05 and 0.05);
				\draw [fill=black] (-3,0) ellipse (0.05 and 0.05);
				\draw [fill=black] (-3.5,0.5) ellipse (0.05 and 0.05);
				\draw [fill=black] (-3.5,-0.5) ellipse (0.05 and 0.05);
				\draw [fill=black] (-4,0) ellipse (0.05 and 0.05);
				\draw [fill=black] (-2,-1.5) ellipse (0.05 and 0.05);
				\draw [->,teal](-1.1064,0.2805) arc (110.7729:166.1538:0.3);
				\draw [->,blue](-1.8753,0.3436) arc (-51.4341:105:0.2);
				\draw [->,blue](-1.9269,-0.3138) arc (68.5656:195:0.2);
				\draw [->,red](-2.2,0.55) arc (165.9647:274.2857:0.2061);
				\draw [->,red](-2.05,-0.7) arc (-104.0366:34.2857:0.2062);
				\draw [->,teal](-1.2913,-0.0718) arc (-166.1537:-110.7692:0.3);
				\draw [->,teal](-1.8229,-1.4071) arc (27.6798:152.3077:0.2);
				\draw [->,teal](-2.8936,-0.2805) arc (-69.2271:69.2308:0.3);
				\draw [teal] (-1.02,-0.3) ellipse (0.05 and 0.05);
				\draw [teal] (-2.21,-1.47) ellipse (0.05 and 0.05);
				\draw [teal] (-2.98,0.3) ellipse (0.05 and 0.05);
				\draw [teal] (-1.3,0.01) ellipse (0.05 and 0.05);
				\draw [blue] (-2.15,0.65) ellipse (0.05 and 0.05);
				\draw [blue] (-2.15,-0.65) ellipse (0.05 and 0.05);
				\draw [red] (-1.85,-0.3) ellipse (0.05 and 0.05);
				\draw [red] (-1.9,0.25) ellipse (0.05 and 0.05);
				\draw [teal] (-0.85,-0.45) node {\footnotesize $0$};
				\draw [teal] (-2.25,-1.2) node {\footnotesize $1$};
				\draw [teal] (-2.8,0.45) node {\footnotesize $2$};
				\draw [teal] (-1.3,0.3) node {\footnotesize $3$};
				\draw [blue] (-2.1,0.9) node {\footnotesize $4$};
				\draw [blue] (-2.35,-0.6) node {\footnotesize $4$};
				\draw [red] (-2,0.1) node {\footnotesize $5$};
				\draw [red] (-1.65,-0.3) node {\footnotesize $5$};
				\draw [teal] (-1.2,-0.45) node {\footnotesize $6$};
				
				\draw [teal, anchor = west] (1.2,1.5) node {\footnotesize $X_0= \{y_4^{v_2}\}$};
				\draw [teal, anchor = west] (1.2,1) node {\footnotesize $X_1= \{y_5^{v_7}\}$};
				\draw [teal, anchor = west] (1.2,0.5) node {\footnotesize $X_2= \{y_6^{v_8}\}$};
				\draw [teal, anchor = west] (1.2,0) node {\footnotesize $X_3= \{x^{v_2}\}$};
				\draw [blue, anchor = west] (1.2,-0.5) node {\footnotesize $X_4= \{y_2^{v_3},y_3^{v_5}\}$};
				\draw [red, anchor = west] (1.2,-1) node {\footnotesize $X_5=  \{x^{v_3},x^{v_5}\}$};
				\draw [teal, anchor = west] (1.2,-1.5) node {\footnotesize $X_6= \{y_4^{v_2}\}$};
			\end{tikzpicture}
			\caption{Left: An example of a periodic Green hyperwalk with the germs at each step circled. Each number corresponds to the step number. Each arrow is to be interpreted as `walking onto' the next germ. Steps are colour-coded such that Green hyperwalk steps in $\hstepone$, $\hsteptwo$ and $\hstepthree$ are respectively coloured teal, red and blue. Right: The Green hyperwalk steps given in the notation of Definition~\ref{def:HyperwalkSteps}, where $X_i$ corresponds to the step labelled $i$ in the left diagram. See Example~\ref{ex:GWs} for the corresponding labelling of the vertices and polygons.} \label{GreenHyperwalkEg}
		\end{figure}
	\end{exam}
	
	Since the zeroth step of any Green hyperwalk is nonempty, no periodic Green hyperwalk terminates. In fact, the next few results show that the converse is true. Namely, that every non-periodic Green hyperwalk is one which terminates.
	
	
	\begin{lem} \label{QuadserialWalk}
		Let $(X_j)_{j \in \mathbb{Z}_{\geq 0}}$ be a non-terminating Green hyperwalk in a Brauer configuration $\chi$. Then $|\pi (g)| \leq 4$ for any $g \in X_i$ and any $i\geq 0$. In particular, if there exists a germ $g \in X_i$ for some $i \geq 0$ such that $|\pi (g)| = 4$, then $\chi$ is the Brauer configuration
		\begin{center}
			\begin{tikzpicture}
				\draw[pattern = north west lines, pattern color=gray] (0.6,-0.4) -- (0.6,0.4) -- (1.4,0.4) -- (1.4,-0.4) -- (0.6,-0.4);
				\draw (0.1,0.9) -- (0.6,0.4);
				\draw (0.1,-0.9) -- (0.6,-0.4);
				\draw (1.4,0.4) -- (1.9,0.9);
				\draw (1.4,-0.4) -- (1.9,-0.9);
				
				\draw (1,0) node {\footnotesize$x$};
				\draw (0.5,0.8) node {\footnotesize$y_1$};
				\draw (0.5,-0.8) node {\footnotesize$y_2$};
				\draw (1.5,-0.8) node {\footnotesize$y_3$};
				\draw (1.5,0.8) node {\footnotesize$y_4$};
				\draw (0.4,0.3) node {\footnotesize$v_1$};
				\draw (0.4,-0.3) node {\footnotesize$v_2$};
				\draw (1.6,-0.3) node {\footnotesize$v_3$};
				\draw (1.6,0.3) node {\footnotesize$v_4$};
				\draw [fill=black] (0.1,0.9) ellipse (0.04 and 0.04);
				\draw [fill=black] (0.6,0.4) ellipse (0.04 and 0.04);
				\draw [fill=black] (0.1,-0.9) ellipse (0.04 and 0.04);
				\draw [fill=black] (0.6,-0.4) ellipse (0.04 and 0.04);
				\draw [fill=black] (1.9,-0.9) ellipse (0.04 and 0.04);
				\draw [fill=black] (1.4,-0.4) ellipse (0.04 and 0.04);
				\draw [fill=black] (1.9,0.9) ellipse (0.04 and 0.04);
				\draw [fill=black] (1.4,0.4) ellipse (0.04 and 0.04);
			\end{tikzpicture}
		\end{center}
		with $\mathfrak{m} \equiv 1$ and $(X_j)$ is of one of the following periodic forms.
		\begin{enumerate}
			\item $(\{x^{v_j}, x^{v_k}\}, \{y_l^{v_l}, y_m^{v_m}\}, \{x^{v_l}, x^{v_m}\}, \{y_j^{v_j}, y_k^{v_k}\},\{x^{v_j}, x^{v_k}\},\{y_l^{v_l}, y_m^{v_m}\},\ldots)$ for some pairwise distinct integers $j,k,l,m \in \{1,2,3,4\}$.
			\item $(\{y_j^{v_j}, y_k^{v_k}\},\{x^{v_j}, x^{v_k}\}, \{y_l^{v_l}, y_m^{v_m}\}, \{x^{v_l}, x^{v_m}\},\{y_j^{v_j}, y_k^{v_k}\},\{x^{v_j}, x^{v_k}\}, \ldots)$ for some pairwise distinct integers $j,k,l,m \in \{1,2,3,4\}$.
		\end{enumerate}
	\end{lem}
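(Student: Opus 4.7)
The plan is to begin with a type-by-type bound on $|\pi(g)|$, then analyse what a (G2) step at a 4-gon forces upon the whole configuration, and finally trace out the two periodic patterns by direct computation. For the bound, if $X_i \in \hstepone$ with $X_i \subset x$ then $Y_i$ has $|x|-1$ germs, and if $X_i \in \hsteptwo$ with $X_i \subset x$ then $Y_i$ has $|x|-2$ germs. Since $Y_i = X_{i+1}$ must itself lie in $\hstep$ (so have cardinality at most $2$) for the walk not to terminate, these cases force $|x| \le 3$ and $|x| \le 4$ respectively; finally, if $X_i \in \hstepthree$ then every $g \in X_i$ lies in a truncated edge by the definition of (G3), so $|\pi(g)| = 2$.

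Now suppose $g_0 \in X_i$ satisfies $|\pi(g_0)| = 4$. By the bound just established, necessarily $X_i \in \hsteptwo$ and $x := \pi(g_0)$ is a 4-gon. Label $x = \{g_1, g_2, g_3, g_4\}$ with $X_i = \{g_1, g_2\}$ and set $v_s = \kappa(g_s)$. Property (G2) applied to $X_i$ gives $\mult(v_s) = 1$, $\val(v_s) = 2$, and $y_s := \pi(\sigma(g_s))$ a truncated edge, for $s \in \{1,2\}$. The crux is to analyse $X_{i+1} = \{\sigma(g_3), \sigma(g_4)\}$, and the main obstacle is to show it must lie in $\hstepthree$. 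The case $X_{i+1} \in \hstepone$ is ruled out immediately because $g_3 \ne g_4$ and $\sigma$ is a bijection on germs at each vertex, forcing $\sigma(g_3) \ne \sigma(g_4)$. If instead $X_{i+1} \in \hsteptwo$, its (G2) clause gives $\val(v_3) = \val(v_4) = 2$, hence $\sigma^2(g_s) = g_s$ for $s \in \{3,4\}$, and then the truncation requirement of (G2) would demand $\pi(g_s) = x$ be a truncated edge, contradicting $|x|=4$.

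Having $X_{i+1} \in \hstepthree$, the definition of (G3) immediately supplies distinct truncated edges $y_3 := \pi(\sigma(g_3))$ and $y_4 := \pi(\sigma(g_4))$ at $v_3, v_4$, while its built-in (G2) clause on $\{\sigma^2(g_3), \sigma^2(g_4)\}$, combined with the $\val = 2$ forcing as above, identifies this set with $\{g_3, g_4\} \subset x$ and delivers $\mult(v_s) = 1$, $\val(v_s) = 2$ for $s \in \{3,4\}$. The vertices $v_1, v_2, v_3, v_4$ are automatically pairwise distinct, because $\val(v_s) = 2$ leaves $g_s$ and $\sigma(g_s)$ as the only germs at each $v_s$, so any coincidence $v_s = v_t$ would produce a third germ. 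Together with the truncation of each $y_s$, this pins down the connected component of $\chi$ containing $x$ as the displayed configuration with $\mathfrak{m} \equiv 1$, and under the standing connectedness assumption this is all of $\chi$.

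To finish, the two periodic forms are verified by direct iteration of Definition~\ref{HyperwalkDefn}. Starting from $X_0 = \{x^{v_j}, x^{v_k}\}$, one computes in turn $Y_0 = \{y_l^{v_l}, y_m^{v_m}\}$, then $Y_1 = \{x^{v_l}, x^{v_m}\}$, then $Y_2 = \{y_j^{v_j}, y_k^{v_k}\}$, and $Y_3 = X_0$, yielding form (1) with period $4$; starting from $X_0 = \{y_j^{v_j}, y_k^{v_k}\}$ gives form (2). Any other starting step available in this configuration---a $\hstepone$ step at $x$, or at one of the $y_s$---is easily seen by direct substitution to produce within at most three iterations a $Y_i$ of cardinality exceeding $2$ (hence not in $\hstep$), so the walk terminates and these cases do not arise. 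Thus forms (1) and (2) exhaust all non-terminating walks in the $|\pi(g)|=4$ case.
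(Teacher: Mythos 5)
Your proof is correct and follows essentially the same strategy as the paper: first bound the polygon size case-by-case on the type of $X_i$, then show $|\pi(g)|=4$ forces $X_i \in \hsteptwo$ and $X_{i+1} \in \hstepthree$, derive the full structure of $\chi$ from the valency and multiplicity constraints carried by those step types, and finish by iterating the walk. You do supply a couple of steps that the paper leaves implicit --- explicitly ruling out $X_{i+1}\in\hstepone$ via injectivity of $\sigma$, and explicitly checking that the $\hstepone$ starting steps in the displayed configuration terminate --- but these are refinements of, not departures from, the paper's argument.
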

	\begin{proof}
		We will first show that if $(X_j)$ is non-terminating, then $|\pi (g)| \leq 4$ for any $g \in X_i$ and any $i\geq 0$. So suppose for a contradiction that there exists $g \in X_i$ for some $i \geq 0$ such that $|\pi(g)|>4$. Then $X_i \in \hstepone \cup \hsteptwo$. Following the procedure of Definition~\ref{HyperwalkDefn}, we know that either $X_{i+1} = \emptyset$ or $|X_{i+1}|=|\pi(g) \setminus X_i|$. But $|\pi(g) \setminus X_i|>2$ and thus $X_{i+1} \not\in \hstep$. So $X_{i+1} = \emptyset$, which is a contradiction to the assumption that $(X_j)$ is non-terminating. Hence if $(X_j)$ is a non-terminating Green hyperwalk, then $|\pi (g)| \leq 4$ for any $g \in X_i$ and any $i\geq 0$, as required.
		
		Now suppose that $(X_j)$ is a non-terminating Green hyperwalk such that there exists a germ $g \in X_i$ for some $i \geq 0$ with $|\pi (g)| = 4$. We will show that $\chi$ must be of the form given in the lemma statement. We first note that $X_i\not\in\hstepone$, since then $|\pi(g) \setminus X_i|=3$, which would imply that $X_{i+1} = \emptyset$, contradicting the assumption that $(X_j)$ is non-terminating. Trivially, we also note that $X_i \not \in \hstepthree$, since $\pi(g_i)$ is not a truncated edge. So $X_i \in \hsteptwo$.
		
		Write $X_i = \{g_i, g'_i\}$ and $x=\pi(g_i)$. Since $X_i \in \hsteptwo$, it follows that $\val(\kappa(g_i))=\val(\kappa(g'_i))=2$ and $\pi(\sigma(g_i))$ and $\pi(\sigma(g'_i))$ are truncated edges. This implies that $x$ cannot be self-folded at either of the vertices $\kappa(g_i)$ and $\kappa(g'_i)$. In particular, there is only one polygon distinct from $x$ connected to $\kappa(g_i)$, which is a truncated edge, and likewise for the vertex $\kappa(g'_i)$. We also have $\mathfrak{m}(\kappa(g_i))=\mathfrak{m}(\kappa(g'_i))=1$.
		
		We will now investigate $X_{i+1}$ and the implications this has on the structure of $\chi$. Since $(X_j)$ non-terminating and $|x \setminus X_i|=2$, it follows that $X_{i+1}=\{g_{i+1},g'_{i+1}\} \in \hsteptwo \cup \hstepthree$. Necessarily, this means that $\val(\kappa(g_{i+1}))= \val(\kappa(g'_{i+1}))=2$ and $\mathfrak{m}(\kappa(g_{i+1}))=\mathfrak{m}(\kappa(g'_{i+1}))=1$. Since $\pi(\sigma(g_{i+1})) = \pi(\sigma(g_{i+1}')) = x$ is not truncated, we have $X_{i+1} \in \mathcal{H}_\chi^3$. This also implies that $\pi(g_{i+1})$ and $\pi(g'_{i+1})$ must be distinct truncated edges.
		
		A consequence of the above is that $\kappa(g_i)$, $\kappa(g'_i)$, $\kappa(g_{i+1})$ and $\kappa(g'_{i+1})$ are pairwise distinct vertices of $x$, since $x$ cannot be self-folded at any of these vertices. In addition, each of these vertices has multiplicity 1, valency 2, and is attached to precisely one truncated edge. Thus, $\chi$ is precisely the Brauer configuration given in the lemma statement.
		
		To show that $(X_j)$ must be either of the form (1) or (2) in the lemma statement, we need only calculate a few terms of the Green hyperwalk. If we write $X_i=\{x^{v_j}, x^{v_k}\}$ for some distinct $j,k \in \{1,2,3,4\}$, then one can easily verify that
		\begin{align*}
			X_{i+1} &= \{y_l^{v_l}, y_m^{v_m}\}	&	X_{i+2} &= \{x^{v_l}, x^{v_m}\}	\\
			X_{i+3} &= \{y_j^{v_j}, y_k^{v_k}\}		&	X_{i+4} &= X_i.
		\end{align*}
		where $l,m \in \{1,2,3,4\} \setminus \{j,k\}$. Setting either $X_0= X_i$ or $X_0 = X_{i+3}$ gives the required result.
	\end{proof}
	
	\begin{lem} \label{TriserialWalk}
		Let $(X_j)_{j \in \mathbb{Z}_{\geq 0}}$ be a non-terminating Green hyperwalk in a Brauer configuration $\chi$. Suppose there exists a germ $g \in X_i$ for some $i \geq 0$ such that $|\pi (g)| = 3$. Let $x=\pi(g)$. Then $\chi$ is of the form
		\begin{center}
			\begin{tikzpicture}
				\draw[pattern = north west lines, pattern color=gray] (0.6,-0.4) -- (0.6,0.4) -- (1.4,0) -- (1.4,0) -- (0.6,-0.4);
				\draw (0.1,0.9) -- (0.6,0.4);
				\draw (0.1,-0.9) -- (0.6,-0.4);
				\draw [dashed](1.4,0) -- (1.7,0);
				\draw [dashed](2.1,0) -- (2.4,0);
				
				\draw (0.9,0) node {\footnotesize$x$};
				\draw (0.5,0.8) node {\footnotesize$y_1$};
				\draw (0.5,-0.8) node {\footnotesize$y_2$};
				\draw (1.9,0) node {\footnotesize$\chi'$};
				\draw (-0.2,0.9) node {\footnotesize$u_1$};
				\draw (0.4,0.3) node {\footnotesize$v_1$};
				\draw (-0.2,-0.9) node {\footnotesize$u_2$};
				\draw (0.4,-0.3) node {\footnotesize$v_2$};
				\draw (1.4,-0.2) node {\footnotesize$v_3$};
				\draw [fill=black] (0.1,0.9) ellipse (0.04 and 0.04);
				\draw [fill=black] (0.6,0.4) ellipse (0.04 and 0.04);
				\draw [fill=black] (0.1,-0.9) ellipse (0.04 and 0.04);
				\draw [fill=black] (0.6,-0.4) ellipse (0.04 and 0.04);
				\draw [fill=black] (1.4,0) ellipse (0.04 and 0.04);
			\end{tikzpicture}
		\end{center}
		where $\mathfrak{m}(u_i)=\mathfrak{m}(v_i)=1$ for $i\in\{1,2\}$ and $\chi'\subseteq \chi$ is defined such that $\mathcal{V}_{\chi'} = \mathcal{V}_\chi \setminus \{u_1, u_2, v_1, v_2\}$ and $\mathcal{P}_{\chi'} = \mathcal{P}_\chi \setminus \{x, y_1, y_2\}$. In addition, one of the following holds.
		\begin{enumerate}
			\item $X_i = \{x^{v_3}\}$, $X_{i+1} = \{y_1^{v_1}, y_2^{v_2}\}$, $X_{i+2} = \{x^{v_1}, x^{v_2}\}$ and $X_{i+3} = \{\sigma(x^{v_3})\}$.
			\item $X_{i} = \{x^{v_1}, x^{v_2}\}$ and $X_{i+1} = \{\sigma(x^{v_3})\}$.
		\end{enumerate}
	\end{lem}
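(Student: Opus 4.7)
The plan is to mirror the structure of the proof of Lemma~\ref{QuadserialWalk}. Writing $x := \pi(g)$ so that $|x|=3$, I observe that $x$ is not a truncated edge. For any $X\in\hstepthree$, unpacking (G3) together with $\val(\kappa(\sigma(h)))=2$ forces $\sigma^2(h)=h$, so $\pi(\sigma^2(h))=\pi(h)$ must be a truncated edge; hence every germ appearing in an $\hstepthree$-step sits in a truncated edge. Thus $X_i\notin\hstepthree$, and it suffices to handle the two cases $X_i\in\hstepone$ and $X_i\in\hsteptwo$ separately, which I will show correspond to cases (1) and (2) of the lemma respectively.

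For the $\hstepone$ case, I write $X_i=\{g\}$ and $x\setminus X_i=\{g',g''\}$, so that $Y_i=\{\sigma(g'),\sigma(g'')\}$ has cardinality $2$ by bijectivity of $\sigma$. Non-termination forces $Y_i\in\hsteptwo\cup\hstepthree$. The $\hsteptwo$ possibility is ruled out, since (G2) applied to $Y_i$ would yield $\val(\kappa(\sigma(g')))=\val(\kappa(g'))=2$, hence $\sigma^2(g')=g'$, and then $\pi(\sigma^2(g'))=\pi(g')=x$ would have to be truncated, contradicting $|x|=3$. Thus $Y_i\in\hstepthree$, and unpacking (G3) (again using $\sigma^2=\id$ at these vertices) yields $\mathfrak{m}(\kappa(g'))=\mathfrak{m}(\kappa(g''))=1$, $\val(\kappa(g'))=\val(\kappa(g''))=2$, and that $\pi(\sigma(g'))$ and $\pi(\sigma(g''))$ are truncated edges. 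Setting $v_1:=\kappa(g')$, $v_2:=\kappa(g''),$ $v_3:=\kappa(g)$, $y_1:=\pi(\sigma(g'))$ and $y_2:=\pi(\sigma(g''))$, I will argue that $v_1,v_2,v_3$ are pairwise distinct and that $y_1\neq y_2$. Each of these follows the same template: a coincidence would force a valency-$2$ vertex to already have its two germs consumed, forcing one of $\sigma(g'),\sigma(g'')$ to lie in $x$ rather than in a truncated edge (or, in the $y_1=y_2$ case, would make both endpoints of a truncated edge have valency $2$, leaving no truncated endpoint for the truncated edge). Since $y_1$ and $y_2$ must each have a truncated second endpoint, labelling these $u_1,u_2$ and taking $\chi'$ to be the subconfiguration obtained by removing $\{x,y_1,y_2\}$ and $\{u_1,u_2,v_1,v_2\}$ recovers precisely the Brauer configuration shown.

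For the $\hsteptwo$ case, I write $X_i=\{g,g_*\}$ and read the multiplicity, valency and truncation conditions on $\kappa(g),\kappa(g_*)$ directly off (G2); the same pairwise-distinctness arguments as above (with $v_1:=\kappa(g)$, $v_2:=\kappa(g_*)$, and $v_3$ the third vertex of $x$) again recover exactly the structure described. Once the structure of $\chi$ is in hand, computing the subsequent steps of the Green hyperwalk from Definition~\ref{HyperwalkDefn} is mechanical: in case (1) one obtains $X_{i+1}=\{y_1^{v_1},y_2^{v_2}\}\in\hstepthree$, then $X_{i+2}=\{\sigma(y_1^{v_1}),\sigma(y_2^{v_2})\}=\{x^{v_1},x^{v_2}\}\in\hsteptwo$ (the (G2) conditions for this set are immediate from what has already been established), and finally $X_{i+3}=\{\sigma(x^{v_3})\}\in\hstepone$; in case (2) one simply reads off $X_{i+1}=\{\sigma(x^{v_3})\}$. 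I expect the main obstacle to be the bookkeeping needed for the pairwise-distinctness of $v_1,v_2,v_3$ and for $y_1\neq y_2$, since these arguments require careful tracking of how valency-$2$ constraints at the vertices of $x$ interact with possible self-foldings; once these distinctness facts are in place, everything else is a direct reading of the definitions.
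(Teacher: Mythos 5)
Your proof is correct and follows essentially the same route as the paper: after observing that $x$ cannot be a truncated edge (so $X_i\notin\hstepthree$), you split on $X_i\in\hstepone$ versus $X_i\in\hsteptwo$ and recover the local structure of $\chi$ and the subsequent steps by chasing the Green-hyperwalk conditions forward under non-termination, just as the paper does. The only difference is one of explicitness --- you spell out the pairwise-distinctness of $v_1,v_2,v_3$ and of $y_1,y_2$, where the paper compresses this to ``it is easy to verify.''
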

	\begin{proof}
		We note that since $|x|=3$, it is not truncated. Thus, $X_i\in \hstepone \cup \hsteptwo$ and hence there are two cases to consider. For the first case, suppose $X_i \in \hstepone$ and write $X_i=\{g_i\}$. Then $|x \setminus X_i|=2$ and so $|X_{i+1}|=2$. So $X_{i+1}=\{g_{i+1}, g'_{i+1}\} \in\hsteptwo \cup \hstepthree$, as $(X_j)$ is non-terminating by assumption. This implies that $\val(\kappa(g_{i+1})),\val(\kappa(g_{i+1})) = 2$ and therefore that $\sigma(g_{i+1}), \sigma(g'_{i+1}) \in x$. Since $x$ is non-truncated, we must have $X_{i+1} \in \hstepthree$.
		
		Similar to the arguments used in the proof of the previous lemma, this necessarily implies that $x$ is not self-folded at the vertices $v_1=\kappa(g_{i+1})$ and $v_2=\kappa(g'_{i+1})$. In addition, there is only one polygon distinct from $x$ connected to $v_1$, which is a truncated edge $y_1=\pi(g_{i+1})$, and likewise, there is only one polygon distinct from $x$ connected to $v_2$, which is a truncated edge $y_2=\pi(g'_{i+1})$. We also have $\mathfrak{m}(v_1)=\mathfrak{m}(v_2)=1$. It follows that $x = \{g_i, \sigma(g_{i+1}), \sigma(g'_{i+1})\}$ and that the vertices $v_1$, $v_2$ and $v_3=\kappa(g_i)$ are pairwise distinct. Thus, $\chi$ must be of the form given in the lemma statement. By writing $g_i=x^{v_3}$, it is easy to verify that (1) holds from the lemma statement.
		
		For the final case, suppose instead that $X_i \in \hsteptwo$ and write $X_i=\{g_i, g'_i\}$. Since $|x \setminus X_i|=1$, it follows that $X_{i+1} \in \hstepone$. Write $X_{i+1}=\{g_{i+1}\}$. It is then easy to verify again that $\chi$ is the Brauer configuration given in the lemma statement, where $v_1=\kappa(g_i)$, $v_2 = \kappa(g'_i)$, $v_3=\kappa(g_{i+1})$, $x=\{g_i, g'_i, \sigma^{-1}(g_{i+1})\}$, $y_1 = \pi(\sigma(g_i))$ and $y_2 = \pi(\sigma(g'_i))$. By writing $g_i=x^{v_1}$ and $g'_i=x^{v_2}$, it is also easy to verify that (2) holds from the lemma statement.
	\end{proof}
	
	\begin{prop}
		Let $(X_j)_{j \in \mathbb{Z}_{\geq 0}}$ be a Green hyperwalk in $\chi$ that does not terminate. Then $(X_j)$ is periodic.
	\end{prop}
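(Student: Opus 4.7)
The plan is to combine a pigeonhole argument with a backward-determinism (injectivity) argument for the hyperwalk successor map. Since $\mathcal{G}_\chi$ is finite, so is $\hstep$, and every step $X_j$ of a non-terminating walk lies in $\hstep$. By pigeonhole, there exist indices $i < j$ with $X_i = X_j$, and the forward determinism of Definition~\ref{HyperwalkDefn} then gives $X_{i+k} = X_{j+k}$ for every $k \geq 0$. To strengthen this eventual periodicity to genuine periodicity (i.e.\ $X_0 = X_{j-i}$), it suffices to show that the successor map $f \colon X_k \mapsto X_{k+1}$ is injective on the subset of $\hstep$ where it takes nonempty values; iterating backward cancellation of the equality $X_i = X_j$ then yields $X_0 = X_{j-i}$, so the walk has period $j - i$.

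To establish this injectivity, I would case on the type of the image $Y = f(X)$. If $Y \in \hstepone$, say $Y = \{h\}$, then the germ $\sigma^{-1}(h)$ lies in a unique polygon $x = \pi(\sigma^{-1}(h))$, and the forward construction forces $|x| \in \{2,3\}$ and $X = x \setminus \{\sigma^{-1}(h)\}$, so $X$ is uniquely determined. If $Y \in \hsteptwo$, writing $Y = \{h,h'\}$, the conditions $\val(\kappa(h)) = 2$ and $\pi(\sigma(h))$ truncated (for each $h \in Y$) imply that $\sigma^{-1}(h)$ and $\sigma^{-1}(h')$ lie in \emph{distinct} truncated edges — if they shared one such edge, both of its endpoints would have to be non-truncated, contradicting the defining property of a truncated edge — so $X \in \hstepthree$ and $X = \{\sigma^{-1}(h), \sigma^{-1}(h')\}$. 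Finally, if $Y \in \hstepthree$, the valency-$2$ condition at $\kappa(h)$ and $\kappa(h')$ collapses $\sigma^{-1}$ to $\sigma$, placing the two predecessors back into the common polygon $y$ of the associated $\hsteptwo$-step $\{\sigma(h),\sigma(h')\}$; then $X = y \setminus \{\sigma^{-1}(h),\sigma^{-1}(h')\} \in \hstepone \cup \hsteptwo$, uniquely determined by $|y| \in \{3,4\}$.

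The main obstacle I anticipate is the injectivity case analysis — specifically, ruling out that two different types of preimage $X$ could produce the same $Y$. This rigidity is exactly what the valency and truncation conditions in the definitions of $\hsteptwo$ and $\hstepthree$ are designed to enforce. As corroboration (and a viable alternative route), Lemmas~\ref{QuadserialWalk} and~\ref{TriserialWalk} show that a non-terminating walk only ever visits polygons of size $\leq 4$, and that $3$- and $4$-gons appear in rigidly prescribed local patterns; one could therefore instead prove the proposition by a trichotomy — any $4$-gon encountered forces period~$4$ via Lemma~\ref{QuadserialWalk}; any $3$-gon is traversed through the rigid excursion of Lemma~\ref{TriserialWalk}; and otherwise the walk lives entirely in $2$-gons and is a classical Green walk on a Brauer graph, periodic by Remark~\ref{GWPeriodic}.
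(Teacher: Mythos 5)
Your argument is correct and takes a genuinely different route from the paper's; the alternative you sketch at the end is essentially the paper's proof. The paper constrains the geometry first: Lemma~\ref{QuadserialWalk} shows a non-terminating walk visits only polygons of size at most~$4$ and classifies the $4$-gon case outright, Lemma~\ref{TriserialWalk} classifies the local excursion through a $3$-gon, and a trichotomy on the maximum polygon size then reduces the remaining case to the classical periodicity of Green walks around Brauer graphs (Remark~\ref{GWPeriodic}). You instead prove the abstract dynamical fact that the step-successor map is injective on steps with nonempty image, so that pigeonhole on the finite set $\hstep$ combined with backward cancellation along the walk pulls the eventual period all the way back to $X_0$. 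Your injectivity case analysis is sound; the key observation in both the $\hsteptwo$ and $\hstepthree$ cases is that $\val(\kappa(h))=2$ forces $\sigma^{-1}(h)=\sigma(h)$, after which the truncated-edge conditions and the disjointness of polygons pin down the preimage's type and contents (for $Y\in\hstepthree$ you could add explicitly that the preimage cannot itself lie in $\hstepthree$, since $\{\sigma(h),\sigma(h')\}$ already lies in the disjoint set $\hsteptwo$). Your approach is more self-contained, since it does not outsource the $2$-gon base case to the Green-walk literature, and it treats all three step types uniformly in a single argument. What it forgoes is the structural classification of Lemmas~\ref{QuadserialWalk} and~\ref{TriserialWalk}, which the paper reuses in Theorem~\ref{ProjRes} to compute syzygies step-by-step along the walk; the paper's longer route pays its rent downstream.
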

	\begin{proof}
		By Lemma~\ref{QuadserialWalk}, we know that $|\pi(g)| \leq 4$ for any $g \in X_i$ and any $i \geq 0$. Since $|\pi(g)| \neq 1$, there are three cases to consider.
		
		Case 1: Suppose that $|\pi (g)| = 2$ for any $g \in X_i$ and all $i \geq 0$. In this case, $X_i \in \hstepone$ for all $i \geq 0$. To see this, first note that $X_i=\{g, g'\} \not\in \hsteptwo$ for any $i \geq 0$, since then $X_i$ would otherwise not be a proper subset of a polygon. We then note that $X_i=\{g, g'\} \not\in \hstepthree$ for any $i \geq 0$, since otherwise $X_{i+1}=\{\sigma(g), \sigma(g')\} \in \hsteptwo$. But if $X_i \in \hstepone$ for all $i \geq 0$, then $(X_j)$ follows a Green walk around some subconfiguration of $\chi$ that is a Brauer graph. Since Green walks in Brauer graphs are periodic (Remark~\ref{GWPeriodic}), it follows that $(X_j)$ is periodic.
		
		Case 2: Now suppose there exists a germ $g \in X_i$ for some $i \geq 0$ such that $|\pi (g)| = 3$. By Lemma~\ref{QuadserialWalk}, this automatically implies that $|\pi (g)| \leq 3$ for any $g \in X_i$ and any $i\geq 0$. There are two subcases to consider. Namely, we have the subcase where there exists no $k\geq 0$ such that $X_k=\{g\}$ with $|\pi(g)|=2$, and the subcase where there exists some $k \geq 0$ such that $X_k=\{g\}$ with $|\pi(g)|=2$.
		
		Case 2a: Suppose that there exists no $k\geq 0$ such that $X_k=\{g\}$ with $|\pi(g)|=2$. If $X_0 \in \hstepone$, then $X_0=\{g_0\}$ with $|\pi(g_0)|=3$. By Lemma~\ref{TriserialWalk}, $X_3 = \{\sigma(g_0)\}$, which must be such that $|\pi(\sigma(g_0))|=3$ by our supposition. Proceeding inductively, one can see that $X_{3i} = \{\sigma^i(g_0)\}$ for all $i \geq 0$. But there exists some $k >0$ such that $\sigma^k(g_0)=g_0$. Thus, $(X_j)$ must be periodic. If we instead have $X_0 \in \hsteptwo$, then $X_1=\{g_1\}$ by Lemma~\ref{TriserialWalk}, which due to our supposition is such that $|\pi(g_1)|=3$. Using the previous argument, there exists a $k$ such that $X_k=\{\sigma^{-1}(g_1)\}$. One can then see from Lemma~\ref{TriserialWalk} that $X_{k+1}\in \hstepthree$ and $X_{k+2}=X_0$, so $(X_j)$ is periodic. Finally, if $X_0 \in \hstepthree$, then $(X_j)$ is again periodic by a similar argument.
		
		Case 2b: Suppose that there exists some $i\geq 0$ such that $X_i=\{g\}$ with $|\pi(g)|=2$. Define sets
		\begin{align*}
			\mathcal{Q}&=\{\pi(g) : g \in X_i \text{ for some } i \text{ such that } X_i \in \hstepone \text{ and } |\pi(g)|=2\} \\
			\mathcal{Q}'&=\{\pi(g) : g \in X_i \text{ for some } i \text{ such that } X_i \in \hsteptwo \cup \hstepthree\} \\
			\mathcal{U}&=\{\kappa(g) : g \in x \text{ for some } x \in \mathcal{Q}\} \\
			\mathcal{U}'&=\{\kappa(g) : g \in x \text{ for some } x \in \mathcal{Q}'\}
		\end{align*}
		Note that the sets $\mathcal{Q}'$ and $\mathcal{U}'$ capture precisely the vertices and polygons in all possible subconfigurations of the form $\chi \setminus \chi'$ of Lemma~\ref{TriserialWalk}. In the context of (1) in Lemma~\ref{TriserialWalk}, it is also easy to then see that the pair $(\mathcal{U}, \mathcal{Q})$ induces a connected subgraph of $\chi$ (that is, a subconfiguration consisting entirely of edges). Green walks around Brauer graphs are periodic, and so the subsequence of $(X_j)$ that skips steps $X_i \in \hsteptwo \cup \hstepthree$ must also be periodic. The periodicity of the whole of $(X_j)$ then follows from (1) and (2) of Lemma~\ref{TriserialWalk}.
	
		Case 3: If there exists a germ $g \in X_i$ for some $i \geq 0$ such that $|\pi (g)| = 4$, then $(X_j)$ is periodic by Lemma~\ref{QuadserialWalk}.
	\end{proof}
	
	\section{Periodic Projective Resolutions from Green Hyperwalks} \label{sec:ProjResolutions}
	In this section, we show that Green hyperwalks in the Brauer configuration are closely related to projective resolutions in the corresponding symmetric special multiserial algebra. For this, we need to define some distinguished sets of modules. For the purpose of avoiding some slight technical differences, we will assume throughout the rest of the paper that $\chi$ is not the exceptional Brauer configuration consisting precisely of one truncated edge.
	
	\begin{defn}
		Let $A=KQ/I$ be a symmetric special multiserial algebra corresponding to a Brauer configuration $\chi$. Define disjoint sets $\Mchi^0, \ldots, \Mchi^3$ of $A$-modules as follows.
		\begin{enumerate}[label=(M\arabic*)]
			\setcounter{enumi}{-1}
			\item $M \in \Mchi^0$ if and only if $M = S(x)$ for some truncated edge $x \in \mathcal{P}_\chi$.
			\item $M \in \Mchi^1$ if and only if $M$ is a maximal non-projective uniserial epimorphic image of $P(x)$ for some $x \in \mathcal{P}_\chi$.
			\item $M \in \Mchi^2$ if and only if $M$ is a string module $M(\alpha^{-1}\beta)$ such that $\alpha,\beta \in Q_1$ and $\{\widehat{t}(\alpha^{-1}),\widehat{s}(\beta)\} \in \hsteptwo$.
			\item $M \in \Mchi^3$ if and only if $M$ is a string module $M(\alpha\beta^{-1})$ such that $\alpha,\beta \in Q_1$ and $\{\widehat{s}(\alpha),\widehat{t}(\beta^{-1})\} \in \hstepthree$.
		\end{enumerate}
		Finally, we define a set $\Mchi = \Mchi^0 \cup \Mchi^1 \cup \Mchi^2\cup \Mchi^3$.
	\end{defn}
	
	Those familiar with Green walks in Brauer graphs and the representation theory of special biserial algebras may recognise definitions (M0) and (M1). Namely, the set $\Mchi^0 \cup \Mchi^1$ appears as the set $\mathcal{M}$ defined in \cite{DuffieldBGA} and \cite{Roggenkamp} and as the set $\overline{\mathcal{M}}$ defined in \cite{TrivialGentle}.
	
	\begin{rem} \label{M1Strings}
		If $M \in \Mchi^1$, then $M=M(w)$ for some direct string $w$. If $s(w)$ is a non-truncated polygon, then $w$ is maximal. If $s(w)$ is instead a truncated edge, then $w\alpha$ is a maximal direct string for some (unique) arrow $\alpha$. We also remark that in the truncated case, $M(w\alpha)=P(s(w))$.
	\end{rem}
	
	The modules in $\mathcal{M}_\chi$ and the Green hyperwalk steps in $\hstep$ are closely related. To show the relationship between the two, we will define a map $\omega\colon \mathcal{M}_\chi \rightarrow \hstep$ by
	\begin{equation*}
		\omega(M) =
		\begin{cases}
			\{g \in x : \kappa(g) \text{ truncated}\}, &	\text{if } M=S(x)\in\Mchi^0, \\
			\{\widehat{s}(w)\},		&	\text{if } M=M(w) \in \Mchi^1 \text{ with } w \text{ direct,} \\
			\{\widehat{t}(\alpha^{-1}), \widehat{s}(\beta) \},	&	\text{if } M=M(\alpha^{-1}\beta)\in\Mchi^2, \\
			\{\widehat{s}(\alpha), \widehat{t}(\beta^{-1}) \},	&	\text{if } M=M(\alpha\beta^{-1})\in\Mchi^3.
		\end{cases}
	\end{equation*}
	
	\begin{lem}
		The map $\omega$ is bijective. In particular, $\omega$ induces bijections
		\begin{align*}
			\omega_1 &\colon \Mchi^0 \cup \Mchi^1 \rightarrow \hstep^1 \\
			\omega_2 &\colon\Mchi^2 \rightarrow \hstep^2 \\
			\omega_3 &\colon\Mchi^3 \rightarrow \hstep^3.
		\end{align*}
	\end{lem}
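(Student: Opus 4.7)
The plan is to verify bijectivity by handling each of the three restrictions $\omega_1, \omega_2, \omega_3$ separately, after first confirming that $\omega$ lands in the stated subsets of $\hstep$. Well-definedness for $\Mchi^2$ and $\Mchi^3$ is built directly into their definitions. For $\Mchi^0$, a truncated edge $x$ is a $2$-gon with exactly one truncated vertex, so $\{g \in x : \kappa(g) \text{ truncated}\}$ is a singleton, hence in $\hstep^1$; for $\Mchi^1$, the image $\{\widehat{s}(w)\}$ is trivially a singleton in $\hstep^1$.

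For $\omega_1$, the strategy is to split $\hstep^1 \cong \mathcal{G}_\chi$ into germs at truncated vertices and germs at non-truncated ones, and match each piece. First I would observe that the truncated edges $x \in \mathcal{P}_\chi$ are in bijection with $\Mchi^0$ via $x \mapsto S(x)$, and sending $S(x)$ to its unique germ at the truncated vertex yields the bijection between $\Mchi^0$ and the set of germs at truncated vertices. Next, by Remark~\ref{M1Strings} each $M \in \Mchi^1$ is $M(w)$ for a direct string $w$ which is either maximal or becomes maximal by postconcatenation of a unique arrow; in both cases the datum $\widehat{s}(w)$ is a germ whose vertex is non-truncated (since $w$ starts with an arrow of $Q$, which uses a non-truncated vertex in its associated cycle $\mathfrak{C}_v$). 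Conversely, any germ $g$ with $\kappa(g)$ non-truncated determines a unique arrow $\alpha_g$ and hence a unique maximal direct string $w$ starting with $\alpha_g$; truncating $w$ by one arrow if $s(w)$ is a truncated edge produces the unique element of $\Mchi^1$ mapping to $\{g\}$. Combining the two cases yields the bijection $\omega_1$.

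For $\omega_2$, I would use the stated bijectivity of $\widehat{s}|_{Q_1}$ and $\widehat{t}|_{Q_1^{-1}}$. Given $\{g_1,g_2\} \in \hstep^2$ with $g_1,g_2 \in x$, set $\alpha = \alpha_{g_1}$ and $\beta = \alpha_{g_2}$, which are well-defined since both $\kappa(g_1)$ and $\kappa(g_2)$ have valency $2$ and are therefore non-truncated. These satisfy $s(\alpha)=s(\beta)=x$ and $\alpha \neq \beta$ (since $g_1 \neq g_2$), so $\alpha^{-1}\beta$ is a legitimate string: it contains no length-two directed subpath of $Q$, so only the monomial relations from $\rho$ are irrelevant, and it contains no subword of the form $\gamma\gamma^{-1}$ or $\gamma^{-1}\gamma$. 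Thus $M(\alpha^{-1}\beta) \in \Mchi^2$ and maps to $\{g_1,g_2\}$, giving surjectivity; injectivity follows because $\alpha^{-1}\beta$ is recovered from $\{\widehat{t}(\alpha^{-1}),\widehat{s}(\beta)\}$ by the same bijections. The argument for $\omega_3$ is entirely analogous, using $\{g_1,g_2\}\in\hstep^3$ with $\pi(g_1),\pi(g_2)$ truncated edges, and forming the string $\alpha\beta^{-1}$ with $\alpha=\alpha_{g_1}$, $\beta=\alpha_{g_2}$ (both common to the cycles at $\kappa(g_i)$, which have valency~$2$ as noted).

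The main potential obstacle is administrative rather than conceptual: one must cleanly verify that $\omega$ respects the partition $\Mchi = \Mchi^0 \sqcup \Mchi^1 \sqcup \Mchi^2 \sqcup \Mchi^3 \to \hstep^1 \sqcup \hstep^2 \sqcup \hstep^3$, so that no crossover between the pieces can occur. This reduces to noting that $\omega_1$ hits only singletons in $\hstep^1$ while $\omega_2$ and $\omega_3$ hit only cardinality-two sets, and that the types $\hstep^2$ and $\hstep^3$ are distinguished by whether the polygons containing the germs of $X$ are truncated edges or not, which mirrors whether the string $\alpha^{-1}\beta$ starts with an inverse letter (matching $\Mchi^2$) or the string $\alpha\beta^{-1}$ starts with a direct letter (matching $\Mchi^3$). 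Once this partition is established, the three restrictions are bijections by the arguments above, and $\omega$ itself is the disjoint union, hence bijective.
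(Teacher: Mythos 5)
Your proof is correct and follows essentially the same approach as the paper: both establish the bijection by using the bijectivity of $\widehat{s}|_{Q_1}$ to match germs to unique arrows, treating the truncated-vertex case of $\hstep^1$ via $\Mchi^0$ and the remaining cases via direct string/string-module constructions. The paper organizes the argument as a surjectivity pass followed by an injectivity pass (invoking Schur's Lemma for the $\Mchi^0$ case), while you argue piece-by-piece through $\omega_1,\omega_2,\omega_3$, but the underlying content is identical.
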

	\begin{proof}
		Showing the map is surjective is straightforward. If $X = \{g\} \in \hstepone$, then either $\kappa(g)$ is non-truncated, in which case there exists a (unique) arrow $\alpha$ such that $\widehat{s}(\alpha)=g$, or $\kappa(g)$ truncated. In the non-truncated case, there exists a direct string $w$ in accordance with Remark~\ref{M1Strings} such that $M(w) \in \Mchi^1$ and $\omega(M(w))=X$. In the truncated case, we have $M=S(\pi(g)) \in \Mchi^0$ with $\omega(M)=X$. If $X = \{g, g'\} \in \hstep^2$, then let $\alpha$ and $\beta$ be the (unique) arrows in $Q_1$ such that $\widehat{s}(\alpha)=g$ and $\widehat{s}(\beta)=g'$. Then $M=M(\alpha^{-1}\beta) \in \Mchi^2$ and $\omega(M)=X$, as required. Finally, if $X = \{g, g'\} \in \hstep^3$, then again let $\alpha$ and $\beta$ be the (unique) arrows in $Q_1$ such that $\widehat{s}(\alpha)=g$ and $\widehat{s}(\beta)=g'$. It follows that $M=M(\alpha\beta^{-1}) \in \Mchi^3$ and $\omega(M)=X$, as required. All possible cases have been considered. Thus for any $X \in \hstep$, there exists a module $M \in \mathcal{M}_\chi$ such that $\omega(M)=X$. So $\omega$ is surjective. In particular, the restriction of $\omega$ to $\Mchi^0 \cup \Mchi^1$, $\Mchi^2$ and $\Mchi^3$ induce respective surjections $\omega_1$, $\omega_2$ and $\omega_3$.
		
		It remains to show that $\omega$ is injective. So suppose $\omega(M_1)=\omega(M_2)=X$ for some $M_1,M_2 \in \mathcal{M}_\chi$. If $X=\{g\} \in \hstep^1$,  then necessarily $M_1,M_2 \in \Mchi^0 \cup \Mchi^1$ . There are two subcases to consider. The first subcase is where $\kappa(g)$ is truncated. Since there exists no arrow $\alpha \in Q_1$ such that $\widehat{s}(\alpha)=g$, neither $M_1$ nor $M_2$ can be a string module of the form $M(w)$ for some direct string $w$, and thus $M_1,M_2 \in \Mchi^0$. The definition of $\omega$ along with Shur's Lemma then implies $M_1=M_2=S(\pi(g))$. The second subcase is where $\kappa(g)$ is non-truncated. This can only be true if both $M_1,M_2 \in \Mchi^1$. Moreover, there exists only one arrow $\alpha\in Q_1$ such that $\widehat{s}(\alpha)=g$. So there exists precisely one direct string $w$ such that $\omega(M(w))=X$. Hence, $M_1=M_2$ by Remark~\ref{M1Strings}. If we instead have $X=\{g, g'\} \in \hstep^2 \cup \hstep^3$, then the uniqueness of arrows $\alpha,\beta \in Q_1$ satisfying $\widehat{s}(\alpha)=g$ and $\widehat{s}(\beta)=g'$ shows that we necessarily have $M_1=M_2$. All possible cases have been considered. Thus, $\omega$ is injective. Since $\omega$ is also surjective, it is bijective (as are the restriction maps $\omega_1$, $\omega_2$ and $\omega_3$).
	\end{proof}
	
	\begin{thm} \label{ProjRes}
		Let $A=KQ/I$ be a symmetric special multiserial algebra corresponding to a Brauer configuration $\chi$. Consider the minimal projective resolution
		\begin{equation*}
			\mathbf{P}_\bullet\colon \cdots \rightarrow P_1 \rightarrow P_0 \rightarrow M \rightarrow 0
		\end{equation*}
		of a module $M \in \mathcal{M}_\chi$. Let $(X_i)_{i \in \mathbb{Z}_{\geq 0}}$ be the Green hyperwalk of $\chi$ from $\omega(M)$ and let $n>0$ be an integer such that $X_i \neq \emptyset$ for all $i<n$. Then
		\begin{enumerate}[label=(\alph*)]
			\item For any $i<n$, we have $\Omega^i(M)=\omega^{-1}(X_i) \in \mathcal{M}_\chi$.
			\item For any $i<n$,
			\begin{equation*}
				P_i=
				\begin{cases}
					P(x_i)								&	\text{ if } X_i \in \hstep^1 \cup \hstep^2 \text{ with } X_i \subset x_i \in \mathcal{P}_\chi, \\
					P(\pi(g_i)) \oplus P(\pi(g'_i))	&	\text{ if } X_i = \{g_i, g'_i\} \in \hstep^3.
				\end{cases}
			\end{equation*}
			\item If $(X_i)$ is periodic then so is $\mathbf{P}_\bullet$.
			\item If $(X_i)$ terminates and $n$ is maximal, then $\Omega^n(M) \not\in \Mchi$. Moreover, the module $\soc \Omega^n(M)$ is simple and we have
			\begin{equation*}
				\Omega^n(M) = \sum_{j=1}^m U_j
			\end{equation*}
			for some maximal uniserial submodules $U_1,\ldots,U_m \subset P_{n-1}$.
		\end{enumerate}
	\end{thm}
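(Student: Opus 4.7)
The plan is induction on the index $i$, with a four-way case analysis on the type of $\Omega^i(M) \in \mathcal{M}_\chi^k$ (for $k \in \{0,1,2,3\}$). The base case $i=0$ is immediate, since $\Omega^0(M)=M$ and $X_0 = \omega(M)$ by hypothesis. For the inductive step at $i+1 < n$, assume $\Omega^i(M) = \omega^{-1}(X_i) \in \mathcal{M}_\chi$; I would verify (a) and (b) at step $i+1$ by first identifying the projective cover $P_i$ and then computing the first syzygy $\Omega^{i+1}(M) = \ker(P_i \twoheadrightarrow \Omega^i(M))$.

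To establish (b), I would read the top of $\Omega^i(M)$ off its description. In each of the cases (M0), (M1), (M2), the module is cyclic with top $S(x)$ for the unique polygon $x$ with $X_i \subseteq x$, giving $P_i = P(x)$; for (M2) this uses that the string $M(\alpha^{-1}\beta)$ has a single ``peak'' at its middle vertex $s(\alpha)=s(\beta)=x$. In case (M3), the string $M(\alpha\beta^{-1})$ has its two peaks at the endpoints, giving top $S(\pi(g)) \oplus S(\pi(g'))$ for $X_i = \{g,g'\}$, and hence $P_i = P(\pi(g)) \oplus P(\pi(g'))$, matching the formula in (b).

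For part (a), I would describe $\Omega^{i+1}(M)$ using the standard presentation of $P(x) = \varepsilon_x A$ as the amalgamation, at a common top $S(x)$ and socle $S(x)$, of the uniserial ``branches'' around each non-truncated vertex of $x$, with cyclic orderings given by $\mathfrak{o}$. A case-by-case analysis shows that in each of the four cases (M0)--(M3), the kernel has top equal to the direct sum of the simples $S(\pi(\sigma(g)))$ indexed by $g \in x \setminus X_i$ (for $X_i \in \mathcal{H}_\chi^1 \cup \mathcal{H}_\chi^2$) or by $g \in X_i$ (for $X_i \in \mathcal{H}_\chi^3$). Matching this top and the branch structure beneath it against the definitions of (M0)--(M3) identifies the kernel as $\omega^{-1}(X_{i+1})$; Lemmas~\ref{QuadserialWalk} and~\ref{TriserialWalk} are invoked to pin down the local structure of $\chi$ in every subcase where $X_{i+1} \in \mathcal{H}_\chi^2 \cup \mathcal{H}_\chi^3$, ensuring that the kernel always falls into the expected shape.

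Parts (c) and (d) then follow. For (c), if $(X_j)$ has period $k$ then iterating (a) gives $\Omega^k(M) \cong M$, and uniqueness of the minimal projective resolution forces $\mathbf{P}_\bullet$ to be periodic. For (d), $X_n = \emptyset$ means $Y_{n-1} \notin \mathcal{H}_\chi$, which happens either because $|Y_{n-1}| > 2$ or because $|Y_{n-1}| = 2$ but both (G2) and (G3) fail. In either subcase, the same syzygy computation returns a module whose top has either three or more distinct summands, or has two summands but with local structure incompatible with being a string module of the form required by (M2) or (M3); the resulting syzygy is a genuine tree module in the sense of Crawley-Boevey and Krause, lying outside $\mathcal{M}_\chi$. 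The principal obstacle is the syzygy computation in the inductive step: although the heuristic ``the new top reads off from the next germs $\sigma(g)$'' is uniform, verifying the identification of the kernel with $\omega^{-1}(X_{i+1})$ must be carried out for each admissible pairing of source type (M0)--(M3) with target type, and the degenerate polygon sizes $3$ and $4$ demand careful application of the structural constraints from Lemmas~\ref{QuadserialWalk} and~\ref{TriserialWalk}.
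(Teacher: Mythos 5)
Your proposal matches the paper's approach closely: an explicit case analysis computing each $\Omega^{i+1}(M)$ from the radical structure of the projective cover of $\Omega^i(M)$, with Lemmas~\ref{QuadserialWalk} and~\ref{TriserialWalk} supplying the local structural constraints on $\chi$ in the triserial and quadserial subcases, followed by (c) and (d) as essentially formal consequences. The one point to correct is your uniform formula for the top of the kernel in the case $X_i = \{g_i,g'_i\}\in\hstep^3$. You propose that the top is $\bigoplus_{g\in X_i} S(\pi(\sigma(g)))$, but since $\pi(\sigma(g_i))=\pi(\sigma(g'_i))=x$ this would yield $S(x)\oplus S(x)$, which is incompatible with the kernel being the string module $M(\gamma^{-1}\delta)=\omega^{-1}(X_{i+1})$, whose top is a single copy of $S(x)$. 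The subtlety is that here $P_i = P(\pi(g_i))\oplus P(\pi(g'_i))$ is a direct sum of two uniserials, and the kernel is a pullback of $\rad P(\pi(g_i))$ and $\rad P(\pi(g'_i))$ over their common image $\soc M(\alpha\beta^{-1})$, so the two ``other halves'' are glued at the top rather than summed. This is a slip in the stated heuristic rather than a missing idea; the case-by-case syzygy computation you correctly identify as the main work of the proof would expose and repair it.
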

	\begin{proof}
		(a) Let $i \geq 0$ and suppose $X_{i+1} \neq \emptyset$. As a remark, this implies that $|\pi(g)|\leq 4$ for all $g \in X_i$. We will show that $\Omega(\omega^{-1}(X_i))=\omega^{-1}(X_{i+1})$ in all possible cases.
		
		Case 1i: Suppose $X_i=\{g_i\}$ and $|\pi(g_i)|=2$. That $\Omega(\omega^{-1}(X_i))=\omega^{-1}(X_{i+1})$ follows from \cite[Proposition 2.4, Lemma 2.5]{Roggenkamp}.
		
		Case 1ii: Suppose $X_i=\{g_i\}$ and $|\pi(g_i)|=3$. In this case, we note that $\omega^{-1}(X_i) = M(w)$, where $w$ is the maximal direct string such that $\widehat{s}(w)=g_i$. It follows from Lemma~\ref{TriserialWalk} that $X_{i+1}=\{g_{i+1}, g'_{i+1}\} \in \hstep^3$. In accordance with the notation of Lemma~\ref{TriserialWalk}, let $x=\pi(g_i)=\{g_i, \sigma(g_{i+1}), \sigma(g'_{i+1})\}$, let $y=\pi(g_{i+1})$ and let $y'=\pi(g'_{i+1})$. Note that $y$ and $y'$ are distinct truncated edges with no common vertex. Moreover,
		\begin{equation*}
			\rad P(x) / \soc P(x) = S(y) \oplus S(y') \oplus \rad M(w).
		\end{equation*}
		Thus, there exists an exact sequence 
		\begin{equation*}
			0 \rightarrow \Omega(M(w)) \rightarrow P(x) \rightarrow M(w) \rightarrow 0.
		\end{equation*}
		It follows that $\Omega(M(w))$ is the submodule of $P(x)$ such that $\tp \Omega(M(w)) = S(y) \oplus S(y')$ and $\rad \Omega(M(w)) = S(x)$, which is precisely the module $M(\alpha\beta^{-1})$ with $\alpha,\beta \in Q_1$ such that $\widehat{s}(\alpha)=g_{i+1}$ and $\widehat{s}(\beta)=g'_{i+1}$. It is easy to verify that $\omega^{-1}(X_{i+1})=M(\alpha\beta^{-1})$, so $\Omega(\omega^{-1}(X_i))=\omega^{-1}(X_{i+1})$, as required.
		
		Case 1iii: Suppose $X_i=\{g_i\}$ and $|\pi(g_i)|>3$. This is not possible, since we then have $|\pi(g_i) \setminus X_i|>2$, which implies $X_{i+1}=\emptyset$.
		
		Case 2i: Suppose $X_i=\{g_i, g'_i\} \in \hstep^2$ and $|\pi(g_i)|=3$. By Lemma~\ref{TriserialWalk}, $X_{i+1}=\{g_{i+1}\}$ and $x=\pi(g_i)=\{g_i, g'_i, \sigma^{-1}(g_{i+1})\}$. In this case, $y=\pi(\sigma(g_i))$ and $y'=\pi(\sigma(g'_i))$ are distinct truncated edges with no common vertex, and $\omega^{-1}(X_{i+1})=M(w)$ for some direct string $w$ such that $\widehat{s}(w)=g_{i+1}$. In addition, $\omega^{-1}(X_i)=M(\alpha^{-1}\beta)$ with $\alpha,\beta \in Q_1$ such that $\widehat{s}(\alpha)= g_i$ and $\widehat{s}(\beta)= g'_i$. Since $\tp M(\alpha^{-1}\beta)=S(x)$, there exists an exact sequence
		\begin{equation*}
			0 \rightarrow \Omega(M(\alpha^{-1}\beta)) \rightarrow P(x) \rightarrow M(\alpha^{-1}\beta) \rightarrow 0.
		\end{equation*}
		But
		\begin{equation*}
			\rad P(x) / \soc P(x) = S(y) \oplus S(y') \oplus M(w) / \soc M(w).
		\end{equation*}
		So $\Omega(\omega^{-1}(X_i))=\Omega(M(\alpha^{-1}\beta)) = M(w)=\omega^{-1}(X_{i+1})$, as required.
		
		Case 2ii: Suppose $X_i=\{g_i, g'_i\} \in \hstep^2$ and $|\pi(g_i)|=4$. By definition, $X_{i+1}=\{g_{i+1}, g'_{i+1}\} \in \hstep^3$. In particular, $x=\pi(g_i)=\{g_i, g'_i, \sigma(g_{i+1}), \sigma(g'_{i+1})\}$, and the edges $y=\pi(\sigma(g_i))$, $y'=\pi(\sigma(g'_i))$, $z=\pi(g_{i+1})$ and $z'=\pi(g'_{i+1})$ are truncated, pairwise distinct, and have no common vertices. Thus, $\chi$ is precisely the Brauer configuration in Lemma~\ref{QuadserialWalk}.  Furthermore, we have $\omega^{-1}(X_i)=M(\alpha^{-1}\beta)$, where $\alpha,\beta \in Q_1$ are such that $\widehat{s}(\alpha)=g_i$ and $\widehat{s}(\beta)=g'_i$, and we have $\omega^{-1}(X_{i+1})=M(\gamma\delta^{-1})$, where $\gamma,\delta \in Q_1$ are such that $\widehat{s}(\gamma)=g_{i+1}$ and $\widehat{s}(\delta)=g'_{i+1}$. Since 
		\begin{equation*}
			\rad P(x) / \soc P(x) = S(y) \oplus S(y') \oplus S(z) \oplus S(z')
		\end{equation*}
		and $\soc M(\alpha^{-1}\beta) =  S(y) \oplus S(y')$ and $\tp M(\gamma\delta^{-1}) = S(z) \oplus S(z')$, it is easy to see that $\Omega(M(\alpha^{-1}\beta))=M(\gamma\delta^{-1})$. Thus, $\Omega(\omega^{-1}(X_i))=\omega^{-1}(X_{i+1})$, as required.
		
		Case 3: $X_i=\{g_i, g'_i\} \in \hstep^3$ . In this case, $X_{i+1}=\{\sigma(g_i), \sigma(g'_i)\} \in \hstep^2$. Let $y= \pi(g_i)$, $y'=\pi(g'_i)$ and $x=\pi(\sigma(g_i))=\pi(\sigma(g'_i))$. Note that $\omega^{-1}(X_i)=M(\alpha\beta^{-1})$, where $\alpha,\beta \in Q_1$ are such that $\widehat{s}(\alpha)=g_i$ and $\widehat{s}(\beta)=g'_i$. Further note that $\omega^{-1}(X_{i+1})=M(\gamma^{-1}\delta)$, where $\gamma,\delta \in Q_1$ are such that $\widehat{s}(\gamma)=\sigma(g_i)$ and $\widehat{s}(\delta)=\sigma(g'_i)$. We thus have an exact sequence
		\begin{equation*}
			0 \rightarrow \Omega(M(\alpha\beta^{-1})) \rightarrow P(y) \oplus P(y') \rightarrow M(\alpha\beta^{-1}) \rightarrow 0,
		\end{equation*}
		and we have
		\begin{equation*}
			\rad P(y) / \soc P(y)=\rad P(y') / \soc P(y') = S(x).
		\end{equation*}
		It then follows that 
		\begin{equation*}
			\Omega(\omega^{-1}(X_i))=\Omega(M(\alpha\beta^{-1}))=M(\gamma^{-1}\delta)=\omega^{-1}(X_{i+1}),
		\end{equation*}
		as required.
		
		All possible cases have been considered. We therefore conclude that for any $i \geq 0$ such that $X_{i+1}\neq \emptyset$, we have $\Omega(\omega^{-1}(X_i))=\omega^{-1}(X_{i+1})$. Thus for any $i<n$, it follows that $\Omega^i(M)=\omega^{-1}(X_i)$, as required.
		
		(b) The statement follows from the exact sequences given in the proof of (a).
		
		(c) This is a trivial consequence of (a) and (b).
		
		(d) By the maximality of $n$, we have $X_n = \emptyset$. This has implications for the projective module $P_{n-1}$. Firstly, $P_{n-1}$ must be indecomposable. To see this, we note that $X_{n-1} \not\in\hstep^3$ by Remark~\ref{GWG3}, which implies that $X_{n-1} \in \hstep^1 \cup \hstep^2$, and hence by (b), that $P_{n-1} = P(x_{n-1})$ for some $x_{n-1} \in \Pchi$. Secondly, we can deduce that $P(x_{n-1})$ is not uniserial, since this would otherwise imply that $x_{n-1}$ is a truncated edge, which necessarily implies that $X_n \in \hstep^1$, contradicting the assumption that $n$ is maximal.
		
		The above reasoning also has implications for the module $\Omega^{n-1}(M)$. Since $X_{n-1} \in \hstep^1 \cup \hstep^2$ and $P_{n-1}$ is uniserial, it follows from (a) that $\Omega^{n-1}(M) \in \Mchi^1 \cup \Mchi^2$. Thus, $\Omega^{n-1}(M)$ is a maximal uniserial (if in $\Mchi^1$) or biserial (if in $\Mchi^2$) epimorphic image of $P(x_{n-1})$. The latter of these two cases follows from (a), as $\Omega^{n-1}(M) \in \Mchi^2$ implies that $X_{n-1} \in \hstep^2$, which implies that $x_{n-1}$ is connected to two truncated edges via distinct vertices of valency 2, and thus $\soc \Omega^{n-1}(M)$ is isomorphic to two simple direct summands of $\rad P(x_{n-1}) / \soc P(x_{n-1})$.
		
		We can now use this information to understand the structure of $\Omega^n(M)$ via the exact sequence
		\begin{equation*}
			0 \rightarrow \Omega^n(M) \rightarrow P(x_{n-1}) \rightarrow \Omega^{n-1}(M) \rightarrow 0.
		\end{equation*}
		Firstly, we note that since $A$ is multiserial, we have $\rad P(x_{n-1}) = \sum_{j=1}^r U_j$ for some uniserial modules such that $U_j \cap U_k$ is simple or zero for each $j \neq k$. In fact since $A$ is symmetric special multiserial, we have $U_j \cap U_k \cong S(x_{n-1})$ for any $j \neq k$. Now $\Omega^n(M)$ is a submodule of $P(x_{n-1})$, so $\soc \Omega^n(M)\cong S(x_{n-1})$ and we must have
		\begin{equation*}
			\Omega^n(M) = \sum_{j=1}^m U'_j,
		\end{equation*}
		where $m \leq r$ and each $U'_j \subseteq U_{k_j}$ for some $1 \leq k_j \leq r$. But $\Omega^{n-1}(M)$ is a maximal uniserial or biserial epimorphic image of $P(x_{n-1})$. So the above exact sequence implies that each $U'_j$ must be a maximal uniserial submodule of $P(x_{n-1})$. That is, each $U'_j = U_{k_j}$ and $m<r$. Thus, $\Omega^n(M)$ has the structure described in the Theorem statement.
		
		To show that $\Omega^n(M) \not\in \Mchi$, we note that $\Omega^n(M)$ cannot be uniserial (that is, $m \neq 1$), since $P(x_{n-1})$ is then either biserial (if $X_{n-1} \in \hstep^1$) or is the triserial projective associated to the polygon $x$ in Lemma~\ref{TriserialWalk} (if $X_{n-1} \in \hstep^2$). In both of these cases, we would then have $X_n \neq \emptyset$, which contradicts our assumption that $n$ is maximal. Similarly, $\Omega^n(M) \not\in \Mchi^3$ since this would imply that $P(x_{n-1})$ is either the triserial projective in Lemma~\ref{TriserialWalk} (if $X_{n-1} \in \hstep^1$) or the quadserial projective in Lemma~\ref{QuadserialWalk} (if $X_{n-1} \in \hstep^2$), and both of these cases would imply that $X_n \neq \emptyset$. Trivially, $\Omega^n(M) \not\in \Mchi^2$ since $\Omega^n(M)$ would then not be a submodule of $P(x_{n-1})$. So $\Omega^n(M) \not\in \Mchi$, as required.
	\end{proof}
	
	\begin{exam}
		Let $\chi$ be the Brauer configuration from Example~\ref{ex:GWs} and let $A$ be the associated symmetric special multiserial algebra. Assuming $\mult(v_2)=1$ (which is not strictly necessary, but convenient for the sake of being explicit), the periodic Green hyperwalk starting from $X_0 = \{y_4^{v_2}\}$ corresponds to the following (periodic) minimal projective resolution.
		\begin{multline*}
			\cdots \rightarrow P(x) \rightarrow P(y_2) \oplus P(y_3) \rightarrow P(x) \rightarrow P(y_6) \rightarrow P(y_5) \rightarrow P(y_4) \rightarrow
			\\ \rightarrow P(x) \rightarrow P(y_2) \oplus P(y_3) \rightarrow P(x) \rightarrow P(y_6) \rightarrow P(y_5) \rightarrow P(y_4) \rightarrow
			\begin{smallmatrix}
				y_4 \\ y_1 \\ y_6 \\ x
			\end{smallmatrix}
			\rightarrow 0
		\end{multline*}
		The terminating Green hyperwalk given by $X_0=\{y_1^{v_2}\}$ corresponds to the following (non-periodic) minimal projective resolution.
		\begin{equation*}
			\cdots \rightarrow P(z) \rightarrow P(y_6) \rightarrow P(y_1) \rightarrow P(y_1) \rightarrow 
			\begin{smallmatrix}
				y_1 \\ y_6 \\ x \\ y_4
			\end{smallmatrix}
			\rightarrow 0
		\end{equation*}
		The projective in the next term of the above projective resolution is $P(z) \oplus P(z) \oplus P(z)$. It is thus easy to verify that beyond the term at which the Green hyperwalk terminates, the number of indecomposable direct summands in each term of the projective resolution grows increasingly large. This observation is true for many terminating Green hyperwalks in other examples of symmetric special multiserial algebras.
	\end{exam}
	
	\section{Tubes Arising from Green Hyperwalks} \label{sec:HyperwalkTubes}
	In \cite{DuffieldBGA}, we showed that the double-stepped Green walks in a Brauer graph correspond to the tubes in the stable Auslander-Reiten of the associated symmetric special biserial algebra. We will prove a similar result for Green hyperwalks in a Brauer configuration. Namely, that there is a distinct tube in a symmetric special multiserial algebra for each double-stepped Green hyperwalk in the corresponding Brauer configuration. For this, we first need the following.
	
	\begin{lem} \label{lem:EpiImage}
		Let $A$ be a self-injective special multiserial $K$-algebra and suppose $P$ is an $n$-serial indecomposable projective-injective $A$-module with $n>1$.
		\begin{enumerate}[label=(\alph*)]
			\item Let $M_1, \ldots, M_m$ be a collection of pairwise non-isomorphic maximal uniserial epimorphic images of $P$ with $m<n$. Let $M$ be the $A$-module that is the (multiserial) epimorphic image of $P$ given by
			\begin{equation*}
				M= \sum_{i=1}^m M_i.
			\end{equation*}
			Then the Auslander-Reiten sequence ending in $M$ has an indecomposable middle term, and the middle term is not projective-injective.
			\item Let $M_1, \ldots, M_m$ be a collection of pairwise non-isomorphic maximal uniserial submodules of $P$ with $m<n$. Let $M$ be the $A$-module that is the (multiserial) submodule of $P$ given by
			\begin{equation*}
				M= \sum_{i=1}^m M_i.
			\end{equation*}
			Then the Auslander-Reiten sequence starting in $M$ has an indecomposable middle term, and the middle term is not projective-injective.
		\end{enumerate}
	\end{lem}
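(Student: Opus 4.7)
The plan is to analyze the Auslander--Reiten sequence $0 \to \tau M \to E \to M \to 0$ ending in $M$ for part (a), and then deduce (b) by a dual argument. The central observation is that $\tp M = S(x) = \tp P$ (where $x$ is the vertex with $P = P(x)$), so $P$ is the projective cover of $M$, and the kernel of $\pi\colon P \to M$ is a nonzero submodule $\Omega M \subseteq \rad P$ built from the $n - m$ uniserial branches of $P$ not used by any $M_i$.

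First I would unpack the structure of $M$ combinatorially: by Remark~\ref{M1Strings}, each $M_i = M(w_i)$ for a maximal direct string $w_i$ starting at $x$, and these $m$ strings correspond to $m$ of the $n$ germs of the polygon at $x$ in the Brauer configuration. Thus $M$ is a tree-shaped module with simple top $S(x)$ and $m$ uniserial branches; none of the $M_i$ is projective, since each is a proper quotient of $P$ with the same simple top. Using that $A$ is symmetric, $\tau M \cong \Omega^2 M$ in the stable category, and the middle term $E$ can be recovered as the sink map at $M$.

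The heart of the argument is decomposing $E$. I would show that the irreducible morphisms into $M$ split into two families. The first consists of irreducible morphisms from projective-injective modules, corresponding to the ``endpoints'' of the branches of $M$ in $Q$; these contribute projective-injective summands of $E$, which are permitted without restriction by the lemma's conclusion. The second family comes from a single ``exchange'' morphism obtained by contracting $M$ along its branches while extending into the $n-m$ complementary uniserial directions of $P$. Because $m < n$, this exchange is well-defined and produces at most one further indecomposable summand of $E$. This summand is non-projective-injective precisely because it genuinely involves the syzygy $\Omega M \subsetneq \rad P$ rather than filling out to $P$ itself.

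The main obstacle will be proving exhaustiveness: that no additional non-projective-injective summand can occur in $E$. This requires the string and tree combinatorics recalled in Section~\ref{sec:StringsBands} to analyse potential sources $N$ of irreducible morphisms into $M$: any such $N$ must be compatible with the $m$-branched structure of $M$ and its projective cover $P$, and irreducibility forces $N$ to be either one of the identified projective-injectives or the unique exchange module described above. Part (b) follows by the dual argument, replacing ``top'' with ``socle'', ``epimorphic image'' with ``submodule'', and $\Omega$ with $\Omega^{-1}$, using that $A$ is self-injective so that the injective envelope of $M$ is again the indecomposable projective-injective $P$; alternatively, the duality of \emph{mod}\,$A$ arising from the symmetric structure of $A$ reduces (b) to (a) directly.
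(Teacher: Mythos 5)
Your proposal correctly identifies the basic structural facts (that $M$ is a tree module with simple top $S(x)=\tp P$, that $P$ is its projective cover, that $\Omega M$ lives inside $\rad P$), but it does not actually close the argument, and part of its structural picture is wrong. Concretely:

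\begin{enumerate}[label=(\arabic*)]
\item You describe the middle term $E$ as possibly having projective-injective summands ``corresponding to endpoints of the branches.'' This is incorrect. An indecomposable projective-injective $P'$ is a summand of the middle term of the AR sequence ending in $M$ only when $M \cong P'/\soc P'$; since $\tp M = \tp P$ and $A$ is basic, this would force $P'=P$, but $M \not\cong P/\soc P$ because $m<n$. So there are no projective-injective summands at all. (In fact the paper's proof establishes more than the lemma states: the middle term is a single indecomposable, non-projective-injective module.)
\item You flag exhaustiveness --- that $E$ has no further non-projective-injective summand --- as ``the main obstacle'' but offer no mechanism to establish it. That gap is precisely the content of the lemma, so the proposal as written does not prove the statement. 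Arguing by direct enumeration of irreducible maps into a tree module $M$ over a potentially wild special multiserial algebra is a serious undertaking and not obviously tractable by the combinatorics of Section~\ref{sec:StringsBands} alone.
\item You invoke ``symmetric'' to conclude $\tau M \cong \Omega^2 M$, but the lemma assumes only self-injectivity. This is a minor slip (over self-injective algebras $\tau\cong\nu\Omega^2$), and it plays no essential role in either your sketch or the paper's proof.
\end{enumerate}

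The paper's proof sidesteps all of this by a short, global argument. It starts from the AR sequence
\begin{equation*}
0 \rightarrow \rad P \rightarrow P \oplus \bigoplus_{i=1}^n U_i \rightarrow P/\soc P \rightarrow 0,
\end{equation*}
where $\rad P / \soc P = \bigoplus_{i=1}^n U_i$, identifies the ``complementary'' index set $J$ with $|J|=n-m>0$, and observes that the restriction $\bigoplus_{j\in J}U_j \rightarrow P/\soc P$ of the minimal right almost split map is an irreducible monomorphism with cokernel $M$ that is not left minimal almost split. The dual of the Brenner and Krause results on kernels of irreducible maps then gives at once that the AR sequence ending in $M$ has a unique indecomposable middle term, which is non-projective-injective by the argument in point~(1) above. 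That citation is exactly the missing exhaustiveness tool your proposal lacks, and it replaces the combinatorial case analysis you were planning with a single abstract criterion. Part (b) is dual, as you expected.
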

	\begin{proof}
		Write
		\begin{equation*}
			\rad P / \soc P = \bigoplus_{i=1}^n U_i,
		\end{equation*}
		where each $U_i$ is uniserial. Since $P$ is non-simple projective-injective, there exists by \cite[IV.3.11]{BlueBookI} an Aulsander-Reiten sequence
		\begin{equation*}
			\xymatrix@C=1.5em{0 \ar[r] & \rad P \ar[rr]^-{\left(\begin{smallmatrix} g_0 \\ \vdots \\ g_n \end{smallmatrix}\right)} && P \oplus \bigoplus_{i=1}^n U_i \ar[rr]^-{\left(\begin{smallmatrix} f_0 & \cdots & f_n \end{smallmatrix}\right)} && P / \soc P \ar[r] & 0}.
		\end{equation*}
		
		(a) Define an index set
		\begin{equation*}
			J=\{j \in \{1,\ldots, n\}: U_j \not\cong \rad M_i \text{ for any } 1 \leq i \leq m\}.
		\end{equation*}
		Clearly $|J|=n-m >0$ and
		\begin{equation*}
			\bigoplus_{k \not\in J} U_k \cong \bigoplus_{i=1}^m \rad M_i =\rad M.
		\end{equation*}
		It therefore follows that there exists an exact sequence
		\begin{equation*}
			\xymatrix{0 \ar[r] & \bigoplus_{j \in J} U_j \ar[r]^-{(f_j)_{j \in J}} & P / \soc P \ar[r] & M \ar[r] & 0}.
		\end{equation*}
		Suppose that the above exact sequence is an Auslander-Reiten sequence (which is only possible if $|J|=1$). Note that $P / \soc P$ is indecomposable and non-projective-injective. So the Auslander-Reiten sequence ending in $M$ then has precisely one indecomposable middle term that is not projective-injective, as required.
		
		On the other hand, suppose that the above exact sequence is not an Auslander-Reiten sequence. The map $(f_j)_{j \in J}$ is irreducible and $M$ is non-simple (since each $M_i$ is non-simple, which implies that $\rad M \neq 0$). Thus, $M$ is a non-simple cokernel of an irreducible monomorphism that is not left minimal almost split. It then follows from the dual of the results in \cite[Theorem 1, Remark 2]{Brenner} and \cite[p. 57, Theorem]{KernelMap} that the Auslander-Reiten sequence ending in $M$ has precisely one indecomposable middle term. By the dual result of \cite[Proposition 3.7]{KernelMap} and the fact that $A$ is self-injective, the middle term of the Auslander-Reiten sequence ending in $M$ is non-projective-injective. Thus, the Auslander-Reiten sequence ending in $M$ has at most one non-projective-injective indecomposable middle term, as required.
		
		(b) The proof is dual to (a). Define
		\begin{equation*}
			J=\{j \in \{1,\ldots, n\}: U_j \not\cong M_i / \soc M_i \text{ for any } 1 \leq i \leq m\}.
		\end{equation*}
		Then
		\begin{equation*}
			\bigoplus_{k \not\in J} U_k \cong \bigoplus_{i=1}^m M_i / \soc M_i = M / \soc M
		\end{equation*}
		and we have an exact sequence 
		\begin{equation*}
			\xymatrix{0 \ar[r] & M \ar[r] & \rad P \ar[r]^-{(g_j)_{j \in J}} & \bigoplus_{j \in J} U_j \ar[r] \ar[r] & 0},
		\end{equation*}
		which is either an Auslander-Reiten sequence with an indecomposable non-projective-injective middle term (if $|J|=1$, and so we are done) or is not an Auslander-Reiten sequence. The result for the latter case follows by similar arguments to (a). It follows that $M$ is the non-simple kernel of irreducible epimorphism that is not right minimal almost split. By \cite[Theorem 1]{Brenner} and \cite[p. 57, Theorem]{KernelMap}, this implies that the Auslander-Reiten sequence starting in $M$ has precisely one indecomposable middle term, which is non-projective-injective by \cite[Proposition 3.7]{KernelMap} and the fact that $A$ is self-injective.
	\end{proof}
	
	Recall that $\tau = \Omega^2$ for any (weakly) symmetric algebra. This fact along with the above lemma can be used to prove the following.
	
	\begin{prop} \label{prop:MchiMiddleTerm}
		Let $A=KQ/I$ be a symmetric special multiserial algebra corresponding to a Brauer configuration $\chi$ and let $M \in \mathcal{M}_\chi$. Then the Auslander-Reiten sequence
		\begin{equation*}
			0 \rightarrow \tau M \rightarrow E \rightarrow M \rightarrow 0
		\end{equation*}
		has either
		\begin{enumerate}[label=(\roman*)]
			\item an indecomposable middle term $E$ that is not projective-injective; or
			\item a middle term $E \cong P \oplus U$ for some indecomposable uniserial projective-injective module $P$ and some indecomposable uniserial non-projective-injective module $U$.
		\end{enumerate}
	\end{prop}
	
	\begin{proof}
		Case 0: Suppose $M=S(x) \in \Mchi^0$. Note that we have exact sequences
		\begin{equation*}
			0 \rightarrow \Omega(S(x)) \rightarrow P(x) \rightarrow S(x) \rightarrow 0,
		\end{equation*}
		\begin{equation*}
			0 \rightarrow \tau S(x) \rightarrow P' \rightarrow \Omega(S(x)) \rightarrow 0.
		\end{equation*}
		Since $P(x)$ is uniserial, we have $\Omega(S(x)) = \rad P(x)$, which is the unique maximal non-projective uniserial submodule of $P(x)$. In addition, we have $P'=P(\tp (\rad P(x)))$. There are now two subcases to consider: either $P'$ is uniserial or it is not uniserial.
		
		Case 0a: If $P'$ is uniserial then $\tau S(x) \cong \soc P'$ and is thus a simple module $S(y)$ for some truncated edge $y \in \Pchi$. In particular, by considering the Green hyperwalk from the germ $g$ associated to the truncated vertex of $x=\{g,g'\}$ and by using Theorem~\ref{ProjRes}(a), we can conclude that $\sigma(g') \in y$ and hence, there exists precisely one arrow $\alpha\colon x \rightarrow y$ in $Q$. This implies that we have $\dim_K \Ext_A^1(S(x),S(y)) = 1$ (c.f. \cite[III.2.12(b)]{BlueBookI}). But since $\tau S(x) \cong S(y)$, this means that the exact sequence
		\begin{equation*}
			0 \rightarrow S(y) \rightarrow M(\alpha) \rightarrow S(x) \rightarrow 0
		\end{equation*}
		is an Auslander-Reiten sequence. The middle term is indecomposable, and thus the Auslander-Reiten sequence ending in $M$ has the form given by (i) of the proposition statement.
		
		Case 0b: If $P'$ is not uniserial, then we note that since $\Omega(S(x))$ is a maximal uniserial epimorphic image of $P'$, it follows that $\tau S(x)$ is a submodule of $P'$ such that
		\begin{equation*}
			\tau S(x)=\sum_{i=1}^{m} M_i,
		\end{equation*}
		where each $M_i$ is a maximal uniserial submodule of $P'$. Since $P'$ is not uniserial, the Auslander-Reiten sequence starting in $\tau M$ has an indecomposable middle-term by Lemma~\ref{lem:EpiImage}(b). Thus, we have the Auslander-Reiten sequence given by (i) in the proposition statement.
	
		Case 1: Suppose we instead have $M \in \Mchi^1$, then we have two subcases to consider. The first case is where the projective cover $P \rightarrow M$ is such that $P$ is not uniserial, in which case the the result follows directly from Lemma~\ref{lem:EpiImage}(a). The second subcase is where $P$ is uniserial, in which case $M \cong P / \soc P$ and thus by \cite[IV.3.11]{BlueBookI} we have an Auslander-Reiten sequence
		\begin{equation*}
			0 \rightarrow \rad P \rightarrow P \oplus U \rightarrow M \rightarrow 0,
		\end{equation*}
		where $U$ is the indecomposable uniserial module $\rad P / \soc P$. This is precisely the Auslander-Reiten sequence given by (ii) in the Proposition statement.
		
		 Case 2: If $M=M(\alpha\inv\beta) \in \Mchi^2$, then the result is a straightforward consequence of Lemma~\ref{lem:EpiImage}(a), since $M$ is an epimorphic image of the $n$-serial (with $n>2$) indecomposable projective $P(s(\beta))$ and
		\begin{equation*}
			M=M(\alpha)+M(\beta),
		\end{equation*}
		where $M(\alpha)$ and $M(\beta)$ are maximal uniserial epimorphic images of $P(s(\beta))$. So in this case, we have the Auslander-Reiten sequence given by (i) in the Proposition statement.
		
		Case 3: Finally, suppose that $M=M(\alpha\beta\inv) \in \Mchi^3$. Then $M = \omega\inv(X_0)$ for some Green hyperwalk step $X_0 \in \hstep^3$. Let $(X_i)_{i \in \mathbb{Z}_{\geq 0}}$ be the Green hyperwalk of $\chi$ from $X_0$. Necessarily, $X_1 \in \hstep^2$. There are then two possibilities for $X_2$: either the Green hyperwalk terminates precisely at $X_2$ or it does not.
		
		Case 3a: If the Green hyperwalk terminates at $X_2$, then by Theorem~\ref{ProjRes}(d), $\tau M = \Omega^2(M)=\omega\inv(X_2)$ is a sum of some maximal uniserial submodules of some indecomposable (non-uniserial) projective-injective $P$. The result then follows by Lemma~\ref{lem:EpiImage}(b), where we obtain the sequence given by (i) of the Proposition statement.
		
		Case 3b: If the Green hyperwalk does not terminate at $X_2$, then $X_1 \subset x \in \Pchi$ with $3 \leq |x| \leq 4$ by Lemma~\ref{QuadserialWalk} and the definition of $\hstep^2$. Thus by Lemma~\ref{QuadserialWalk} and Lemma~\ref{TriserialWalk}, we have $X_2 \in \hstep^1 \cup \hstep^3$.
		
		Case 3bi: If $X_2 \in \hstep^3$, then Theorem~\ref{ProjRes}(a) implies $\tau M = \Omega^2(M)=\omega\inv(X_2)=M(\gamma\delta\inv) \in \Mchi^3$ for some string $\gamma\delta\inv$. In particular $\tau M = M(\gamma) + M(\delta)$, and $M(\gamma)$ and $M(\delta)$ are maximal uniserial submodules of the projective-injective module $P(x)$. Thus, we obtain sequence (i) from the Proposition statement by applying Lemma~\ref{lem:EpiImage}(b).
		
		Case 3bii: If we instead have $X_2 = \{g\} \in \hstep^1$ then $x$ must be of the form given in Lemma~\ref{TriserialWalk} and thus $\kappa(g)$ is non-truncated. So $\tau M = \Omega^2(M)=\omega\inv(X_2)=M(w) \in \Mchi^1$ for some maximal direct string $w$. In particular, $\tau M$ is a maximal non-projective uniserial submodule of the projective-injective $P(t(w))$. In fact, $t(w)=x$, which is not uniserial. So we can use Lemma~\ref{lem:EpiImage}(b) to conclude that the Auslander-Reiten sequence ending in $M$ is of the form given by (i) of the Proposition statement. This is the final case to consider, and so we are done.
	\end{proof}
	
	In light of Theorem~\ref{ProjRes} and the fact that $\tau = \Omega^2$, we make the following definition.
	\begin{defn}
		A \emph{double-stepped Green hyperwalk} of $\chi$ is a subsequence $(X_{2i})_{i \in \mathbb{Z}_{\geq 0}}$ of a Green hyperwalk $(X_i)_{i \in \mathbb{Z}_{\geq 0}}$ of $\chi$.
	\end{defn}
	
	Clearly if a Green hyperwalk is periodic, any double-stepped subsequence is also periodic.
	
	\begin{defn}
		Let $(X_i)_{i \in \mathbb{Z}_{\geq 0}}$ and $(X'_i)_{i \in \mathbb{Z}_{\geq 0}}$ be periodic Green hyperwalks of $\chi$ (either both single-stepped or both double-stepped). We say $(X_i)$ and $(X'_i)$ are \emph{distinct} if there exists no integer $j>0$ such that $X_i = X'_{j+i}$ for all $i \geq 0$.
	\end{defn}
	
	\begin{thm}
		Let $A$ be a connected symmetric special multiserial algebra of infinite-representation type corresponding to a Brauer configuration $\chi$.
		\begin{enumerate}[label=(\alph*)]
			\item Let $X=(X_i)_{i \in \mathbb{Z}_{\geq 0}}$ be a periodic double-stepped Green hyperwalk of $\chi$ of period $n$. Then there exists a corresponding tube $\mathcal{T}_X$ of the form $\mathbb{Z}\mathbb{A}_\infty / \langle \tau^n \rangle$ in $_s\Gamma_A$.
			\item Given a tube $\mathcal{T}_X$ in $_s\Gamma_A$ corresponding to a periodic double-stepped Green hyperwalk $X=(X_i)_{i \in \mathbb{Z}_{\geq 0}}$, the modules at the mouth of $\mathcal{T}_X$ are precisely the modules $\omega^{-1}(X_i)$ for each $i$.
			\item Let $X=(X_i)_{i \in \mathbb{Z}_{\geq 0}}$ and $X'=(X'_i)_{i \in \mathbb{Z}_{\geq 0}}$ be distinct periodic double-stepped Green hyperwalks of $\chi$. Then $\mathcal{T}_X$ and $\mathcal{T}_{X'}$ are distinct tubes in $_s\Gamma_A$.
		\end{enumerate}
	\end{thm}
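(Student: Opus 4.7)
The plan proceeds in three stages: translate the double-stepped walk into a $\tau$-orbit of modules in $\Mchi$, use the classification of stable Auslander--Reiten components containing periodic modules to produce a tube, and then identify the modules at its mouth. To begin, I would fix a periodic double-stepped Green hyperwalk $X=(X_{2i})_{i\in\mathbb{Z}_{\geq 0}}$ of minimal period $n$ and set $M_i := \omega^{-1}(X_{2i})$. Since $A$ is symmetric one has $\tau = \Omega^2$, and Theorem~\ref{ProjRes}(a) then yields $\tau^i M_0 = \Omega^{2i}(M_0) = \omega^{-1}(X_{2i}) = M_i$ for every $i\geq 0$. Bijectivity of $\omega$ forces the minimal $\tau$-period of $M_0$ to be exactly $n$, so the double-stepped walk records precisely the $\tau$-orbit of $M_0$ in $\Mchi$.

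For part (a), each $M_i$ is non-projective (all of (M0)--(M3) explicitly exclude indecomposable projectives) and $\tau$-periodic. Since $A$ is of infinite representation type, Auslander's theorem rules out finite components of ${}_s\Gamma_A$, so the component $\mathcal{T}_X$ of $M_0$ is infinite, and by the Happel--Preiser--Ringel theorem it is therefore a stable tube of the form $\mathbb{Z}\mathbb{A}_\infty / \langle \tau^k \rangle$ for some $k$ dividing $n$. To pin down $k = n$, I would invoke Proposition~\ref{prop:MchiMiddleTerm} at $M_0$: the AR sequence ending in $M_0$ has a single indecomposable middle term, which in a stable tube characterises mouth modules. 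Since the mouth of a rank-$k$ tube consists of a single $\tau$-orbit of $k$ indecomposables and $M_0$ has $\tau$-period exactly $n$, we must have $k = n$.

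Part (b) then follows at once: applying Proposition~\ref{prop:MchiMiddleTerm} to each $M_i \in \Mchi$ places every $M_i$ at the mouth of $\mathcal{T}_X$, so $\{M_0, \ldots, M_{n-1}\} = \{\omega^{-1}(X_{2i}) : 0 \leq i < n\}$ is a $\tau$-orbit of $n$ mouth modules and must therefore be the entire mouth. For part (c), if $\mathcal{T}_X = \mathcal{T}_{X'}$ for periodic double-stepped Green hyperwalks $X$ and $X'$, then $\omega^{-1}(X_0)$ and $\omega^{-1}(X'_0)$ both lie at the mouth and hence belong to a common $\tau$-orbit; writing $\omega^{-1}(X'_0) = \tau^j \omega^{-1}(X_0) = \omega^{-1}(X_{2j})$ and applying $\tau$ repeatedly yields $X'_{2i} = X_{2(j+i)}$ for all $i$, contradicting distinctness.

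The main obstacle I anticipate is the appeal to the Happel--Preiser--Ringel classification to guarantee that the stable AR component of a non-projective $\tau$-periodic module in an infinite-representation-type algebra is a tube rather than some other infinite translation quiver. A more self-contained alternative would knit outward from the $\tau$-orbit of $M_0$ using the middle-term control of Proposition~\ref{prop:MchiMiddleTerm} together with analogous control at the second row, but invoking the classical result is considerably cleaner.
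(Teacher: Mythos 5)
Your proposal follows essentially the same route as the paper's proof: use Theorem~\ref{ProjRes} and $\tau=\Omega^2$ to exhibit the $\tau$-orbit, invoke the Happel--Preiser--Ringel/Todorov classification of $\tau$-periodic components to obtain a tube, rule out the finite-Dynkin case by connectedness/infinite-representation-type, and use Proposition~\ref{prop:MchiMiddleTerm} to place the $\Mchi$-modules at the mouth for parts (b) and (c). Your extra step of invoking Proposition~\ref{prop:MchiMiddleTerm} already in part (a) to pin down $k=n$ is harmless but unnecessary, since in a tube $\mathbb{Z}\mathbb{A}_\infty/\langle\tau^k\rangle$ every module (not just a mouth module) has $\tau$-period exactly $k$, which is how the paper concludes.
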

	\begin{proof}
		(a) Let $X=(X_i)_{i \in \mathbb{Z}_{\geq 0}}$ be a periodic double-stepped Green hyperwalk of $\chi$. By Theorem~\ref{ProjRes}(c), we know that for each $i$, the modules $\omega\inv(X_i) \in \Mchi$ have periodic projective resolutions. In particular, we know by Theorem~\ref{ProjRes}(a) that $\omega\inv(X_{i+1})=\tau\omega\inv(X_i)$ and hence the modules $\omega\inv(X_i) \in \Mchi$ are $\tau$-periodic. Thus, the modules $\omega\inv(X_i) \in \Mchi$ all belong to the same stable component $\mathcal{T}_X$ of $_s\Gamma_A$.
		
		By the Riedtmann structure theorem, $\mathcal{T}_X$ is a valued translation quiver of the form $\mathbb{Z}\Delta / G$ for some valued quiver $\Delta$ and group $G$ of automorphisms of $\mathbb{Z}\Delta$. Since $\mathcal{T}_X$ contains a $\tau$-periodic module, it follows from the main result in \cite[pp. 602-603, Theorem]{Todorov} that the underlying graph of $\Delta$ is either a (finite) Dynkin diagram or an infinite tree. But the algebra $A$ is connected and of infinite-representation type. So $\Delta$ cannot be finite Dynkin diagram, as $\mathcal{T}_X$ would otherwise be a disconnected component of $_s\Gamma_A$. It thus follows directly from \cite[p. 280, Theorem]{Vinberg} that $\Delta = \mathbb{A}_\infty$. Finally, it follows from Theorem~\ref{ProjRes}(a) that if the period of the double-stepped Green hyperwalk $X$ is $n$, then for any step $X_i$ of $X$, we have $\tau^n \omega\inv(X_i) \cong \omega\inv(X_i)$ and there exists no $m < n$ such that $\tau^m \omega\inv(X_i) \cong \omega\inv(X_i)$. Thus $G= \langle \tau^n \rangle$.
		
		In conclusion, we have shown that each step $X_i$ of $X$ corresponds to a module $\omega\inv(X_i)$, and every such step resides in the same component $\mathcal{T}_X$ of $_s\Gamma_A$, and $\mathcal{T}_X$ is a tube of the form $\mathbb{Z}\mathbb{A}_\infty / \langle \tau^n \rangle$, as required.
		
		(b) This is an immediate consequence of Theorem~\ref{ProjRes} and Proposition~\ref{prop:MchiMiddleTerm}.
		
		(c) This follows immediately from (b), since a tube in an algebra of infinite-representation type has only one mouth.
	\end{proof}
	
	\begin{exam}
		Let $\chi$ be the Brauer configuration from Example~\ref{ex:GWs} with the additional assumption (for the sole purpose of being explicit) that $\mult(v_2)=\mult(v_8)=1$ and $\mult(v_7)=3$. Let $A$ be the symmetric special multiserial algebra corresponding to $\chi$. Then $\chi$ has only one periodic Green hyperwalk of period 6, which is given by
		\begin{equation*}
			(\{y_4^{v_2}\}, \{y_5^{v_7}\}, \{y_6^{v_8}\}, \{x^{v_2}\}, \{y_2^{v_3},y_3^{v_5}\}, \{x^{v_3},x^{v_5}\}, \{y_4^{v_2}\},\{y_5^{v_7}\}, \{y_6^{v_8}\},\ldots).
		\end{equation*}
		However, this splits into two distinct double-stepped Green hyperwalks of period 3. Namely,
		\begin{equation*}
			(\{y_4^{v_2}\}, \{y_6^{v_8}\}, \{y_2^{v_3},y_3^{v_5}\},\{y_4^{v_2}\},\ldots) \qquad \text{and} \qquad (\{y_5^{v_7}\}, \{x^{v_2}\}, \{x^{v_3},x^{v_5}\},\{y_5^{v_7}\},\ldots).
		\end{equation*}
		Thus, there are two tubes of rank 3 in $_s\Gamma_A$ arising from Green hyperwalks in $\chi$. The modules at the mouth of the former tube are
		\begin{equation*}
			\begin{smallmatrix}
				y_4 \\ y_1 \\ y_6 \\ x
			\end{smallmatrix}, \quad
			\begin{smallmatrix}
				y_6 \\ z \\ y_5
			\end{smallmatrix} \quad
			\text{ and } \quad
			\begin{smallmatrix}
				y_2 &  & y_3 \\ & x &
			\end{smallmatrix},
		\end{equation*}
		whereas the modules at the mouth of the latter tube are
		\begin{equation*}
			\begin{smallmatrix}
				y_5 \\ y_4 \\ y_5 \\ y_4 \\ y_5 \\ y_4
			\end{smallmatrix}, \quad
			\begin{smallmatrix}
				x \\ y_4 \\ y_1 \\ y_6
			\end{smallmatrix} \quad
			\text{ and } \quad
			\begin{smallmatrix}
				& x & \\
				y_2 &  & y_3
			\end{smallmatrix}.
		\end{equation*}
	\end{exam}
	
	\section{Additional Rank 2 Tubes Containing String Modules} \label{sec:rank2}
	As we have seen previously, Brauer subconfiguations of the form given in Lemma~\ref{TriserialWalk} are useful in the context of Green hyperwalks (since they potentially permit periodic behaviour) and Auslander-Reiten theory (since they potentially allow for tubes). Informally, this is because the representation theory of the associated symmetric special multiserial algebra exhibits a similar local behaviour to a quiver of type $\mathbb{D}$ or $\widetilde{\mathbb{D}}$ around these polygons. We will formalise this notion and show how one obtains additional tubes of rank 2 that contain string modules from these Brauer subconfigurations. The constructions in this section are again of a combinatorial nature, and so we will provide numerous examples throughout.
	
	\begin{defn} \label{defn:Dtriple}
	Let $\chi$ be a Brauer configuration. An ordered triple $(x, y_1, y_2)$ of polygons of $\chi$ is called a \emph{$\mathbb{D}$-triple} if $x=\{g_0,g_1,g_2\}$, $y_1=\{\sigma(g_1),g'_1\}$ and $y_2=\{\sigma(g_2),g'_2\}$ such that $x$ is not self-folded, $y_1$ and $y_2$ are truncated, $\val(\kappa(g_1))=\val(\kappa(g_2))=2$, and $\mathfrak{m}(\kappa(g_1))=\mathfrak{m}(\kappa(g_2))=1$. We denote the set of all $\mathbb{D}$-triples of $\chi$ by $\Dchi$.
	\end{defn}
	Essentially, the polygons $x$, $y_1$ and $y_2$ in the definition above are the same as those in Lemma~\ref{TriserialWalk}.
	
	\begin{defn} \label{defn:Wchi}
		Let $\chi$ be a Brauer configuration associated to a Brauer configuration algebra $A=KQ/I$. Let $d=(x,y_1,y_2)$ and $d'=(x',y'_1,y'_2)$ be two (possibly equal) $\mathbb{D}$-triples in $\chi$. We define a set of strings $\mathcal{W}(d, d')$ by $w \in \mathcal{W}(d, d')$ if and only if $w=\alpha \beta_1 \ldots \beta_n \gamma$ satisfies all of the following properties.
		\begin{enumerate}[label=(W\arabic*)]
			\item $|w| >2$
			\item For any substring $\beta_i\beta_{i+1}$ of $w$ with $\beta_i \in Q_1$ and $\beta_{i+1} \in Q^{-1}_1$, we have $|t(\beta_i)|=|s(\beta_{i+1})|=2$.
			\item For any substring $\beta_i\beta_{i+1}$ of $w$ with $\beta_i \in Q^{-1}_1$ and $\beta_{i+1} \in Q_1$, we have $|t(\beta_i)|=|s(\beta_{i+1})|=2$.
			\item $s(\alpha)=y_1$ and $t(\gamma)=y'_1$.
		\end{enumerate}
		Finally, define
		\begin{equation*}
			\Wchi=\bigcup_{d,d' \in \Dchi} \mathcal{W}(d,d') .
		\end{equation*}
	\end{defn}
	
	Property (W1) of the above definition ensures that the substring $\beta_1\ldots\beta_n$ of $w$ is non-zero. Properties (W2) and (W3) ensure that the polygons at a deep or peak of $\beta_2\ldots\beta_{n-1}$ are edges in $\chi$. It is easy to see that the sets $\mathcal{W}(d, d')$ and $\mathcal{W}(d', d)$ are equivalent, in the sense that each string in the latter set is the inverse of some string in the former set.
	
	\subsection{Involutions on $\Wchi$} \label{sec:Involutions}
	There exist three involution operations $\mu_0$, $\mu_1$ and $\mu_2$ on the set $\Wchi$ that are useful in describing the projective resolutions of string modules of strings in $\Wchi$. We will define them below.
	
	Let $w=\alpha w' \gamma \in \Wchi$. We will define a corresponding string $\mu_0(w) \in \Wchi$ such that $w \neq \mu_0(w)$. First note that since $s(w)$ and $t(w)$ are truncated, there exist unique symbols $\overline{\alpha}, \overline{\gamma} \in Q_1 \cup Q_1^{-1}$ such that $\overline{\alpha} \neq \alpha$, $\overline{\gamma} \neq \gamma$, $s(\overline{\alpha})=s(\alpha)$ and $t(\overline{\gamma})=t(\gamma)$. Now note that $w'$ can be written as a concatenation of direct substrings $w^+_i$ and inverse strings $w^-_i$ such that $w' = w^{\pm}_1 w^{\mp}_2\ldots$. Recall that each vertex $v \in \Vchi$ induces a cycle of arrows $\cycle_v$ in $Q_1$ and a cycle of formal inverses $\cycle\inv_v$. Thus, for each $w^+_i$, there exists a string $\overline{w}^+_i$ such that $w^+_i \overline{w}^+_i$ is a rotation of $\cycle_{v}^{\mult(v)}$, where $v=\kappa(\widehat{s}(w^+_i))$. Similarly, for each $w^-_i$, there exists a string $\overline{w}^-_i$ such that $w^-_i\overline{w}^-_i$ is a rotation of $\cycle_{v}^{-\mult(v)}$, where $v=\kappa(\widehat{s}(w^-_i))$. We may then define
	\begin{equation*}
		\mu_0(w)=\overline{\alpha} \overline{w}' \overline{\gamma},
	\end{equation*}
	where
	\begin{equation*}
		\overline{w}'=(\overline{w}^\pm_1)\inv (\overline{w}^\mp_2)\inv\ldots.
	\end{equation*}
	It is easy to see that $\mu_0^2(w)=w$, and thus is an involution. Moreover, if $w \in \mathcal{W}(d,d')$ for some $d,d' \in \Dchi$, then $\mu_0(w) \in \mathcal{W}(d,d')$.
	
	We will now define strings $\mu_1(w)$ and $\mu_2(w)$. If $w=\alpha w' \gamma \in \mathcal{W}(d,d')$ for some $\mathbb{D}$-triples $d=(x,y_1,y_2)$ and $d'=(x',y'_1,y'_2)$, then there exist unique symbols $\alpha^\ast,\gamma^\ast \in Q_1 \cup Q_1\inv$ such that $s(\alpha^\ast)=y_2$ and $t(\gamma^\ast)=y'_2$ and such that $\alpha^\ast w'\gamma^\ast$ is a string. This is because $|x|=|x'|=3$. We thus define
	\begin{equation*}
		\mu_1(w)=\alpha^\ast w'\gamma \qquad \text{and} \qquad \mu_2(w)=\alpha w'\gamma^\ast.
	\end{equation*}
	In addition, there exist $\mathbb{D}$-triples $\widehat{d}=(x,y_2,y_1)$ and $\widehat{d}'=(x',y'_2,y'_1)$ such that $\mu_1(w) \in \mathcal{W}(\widehat{d},d')$ and $\mu_2(w) \in \mathcal{W}(d,\widehat{d}')$. The operations $\mu_1$ and $\mu_2$ are clearly involutions. It is also easy to verify that $\mu_0$, $\mu_1$ and $\mu_2$ are pairwise commutative. 
	
	\begin{exam} \label{ex:WchiFinite}
		Consider the symmetric special multiserial algebra $A=KQ/I$ corresponding to the following Brauer configuration $\chi$ with $\mathfrak{m} \equiv 1$ (and with the arrows of $Q$ superimposed).
		\begin{center}
			\begin{tikzpicture}
				\draw[pattern = north west lines, pattern color=gray] (-2,-0.4) -- (-2,0.4) -- (-1.2,0) -- (-1.2,0) -- (-2,-0.4);
				\draw (-2.5,0.9) -- (-2,0.4);
				\draw (-2.5,-0.9) -- (-2,-0.4);
				\draw[pattern = north west lines, pattern color=gray] (2,-0.4) -- (2,0.4) -- (1.2,0) -- (1.2,0) -- (2,-0.4);
				\draw (2.5,0.9) -- (2,0.4);
				\draw (2.5,-0.9) -- (2,-0.4);
				\draw (-1.2,0) -- (1.2,0);
				\draw[pattern = north west lines, pattern color=gray] (0,1.1) -- (0.4,1.7) -- (-0.4,1.7) -- (-0.4,1.7) -- (0,1.1);
				\draw [pattern = north west lines, pattern color=gray](0,0) .. controls (-0.3,0.4) and (-0.5,0.8) .. (0,1.1) .. controls (-0.7,0.2) and (0.7,0.2) .. (0,1.1) .. controls (0.5,0.8) and (0.3,0.4) .. (0,0);
				\draw (-0.9,2.2) -- (-0.4,1.7);
				\draw (0.9,2.2) -- (0.4,1.7);
				
				\draw (-1.7,0) node {\footnotesize$x$};
				\draw (-2.1,0.8) node {\footnotesize$y_1$};
				\draw (-2.1,-0.8) node {\footnotesize$y_2$};
				\draw (1.7,0) node {\footnotesize$x'$};
				\draw (2.1,0.8) node {\footnotesize$y'_1$};
				\draw (2.1,-0.8) node {\footnotesize$y'_2$};
				\draw (0,1.5) node {\footnotesize$x''$};
				\draw (-0.9,1.8) node {\footnotesize$y''_1$};
				\draw (0.9,1.8) node {\footnotesize$y''_2$};
				\draw (-0.6,-0.2) node {\footnotesize$z_1$};
				\draw (0,0.3) node {\footnotesize$z_2$};
				\draw (0.6,-0.2) node {\footnotesize$z_3$};
				\draw[red] (-2.4,0.3) node {\footnotesize$\alpha_1$};
				\draw[red] (-1.6,0.6) node {\footnotesize$\alpha_2$};
				\draw[red] (-1.7,-0.7) node {\footnotesize$\alpha_3$};
				\draw[red] (-2.4,-0.3) node {\footnotesize$\alpha_4$};
				\draw[red] (-1.2,-0.4) node {\footnotesize$\beta_1$};
				\draw[red] (-1.2,0.4) node {\footnotesize$\beta_2$};
				\draw[red] (0,-0.4) node {\footnotesize$\beta_3$};
				\draw[red] (0.4,0.2) node {\footnotesize$\beta_4$};
				\draw[red] (-0.4,0.2) node {\footnotesize$\beta_5$};
				\draw[red] (1.2,-0.4) node {\footnotesize$\beta_6$};
				\draw[red] (1.2,0.4) node {\footnotesize$\beta_7$};
				\draw[red] (0,0.7) node {\footnotesize$\beta_8$};
				\draw[red] (0.4,1.1) node {\footnotesize$\beta_9$};
				\draw[red] (-0.5,1.1) node {\footnotesize$\beta_{10}$};
				\draw[red] (1.6,0.6) node {\footnotesize$\gamma_1$};
				\draw[red] (2.4,0.3) node {\footnotesize$\gamma_2$};
				\draw[red] (2.4,-0.3) node {\footnotesize$\gamma_3$};
				\draw[red] (1.7,-0.7) node {\footnotesize$\gamma_4$};
				\draw[red] (-0.7,1.5) node {\footnotesize$\delta_1$};
				\draw[red] (-0.4,2.1) node {\footnotesize$\delta_2$};
				\draw[red] (0.7,1.5) node {\footnotesize$\delta_3$};
				\draw[red] (0.3,2.1) node {\footnotesize$\delta_4$};
				\draw [fill=black] (-2.5,0.9) ellipse (0.04 and 0.04);
				\draw [fill=black] (-2,0.4) ellipse (0.04 and 0.04);
				\draw [fill=black] (-2.5,-0.9) ellipse (0.04 and 0.04);
				\draw [fill=black] (-2,-0.4) ellipse (0.04 and 0.04);
				\draw [fill=black] (-1.2,0) ellipse (0.04 and 0.04);
				\draw [fill=black] (2.5,0.9) ellipse (0.04 and 0.04);
				\draw [fill=black] (2,0.4) ellipse (0.04 and 0.04);
				\draw [fill=black] (2.5,-0.9) ellipse (0.04 and 0.04);
				\draw [fill=black] (2,-0.4) ellipse (0.04 and 0.04);
				\draw [fill=black] (1.2,0) ellipse (0.04 and 0.04);
				\draw [fill=black] (0,0) ellipse (0.04 and 0.04);
				\draw [fill=black] (0,1.1) ellipse (0.04 and 0.04);
				\draw [fill=black] (-0.4,1.7) ellipse (0.04 and 0.04);
				\draw [fill=black] (-0.9,2.2) ellipse (0.04 and 0.04);
				\draw [fill=black] (0.4,1.7) ellipse (0.04 and 0.04);
				\draw [fill=black] (0.9,2.2) ellipse (0.04 and 0.04);
			
				\draw [->,red](-2.1732,0.5) arc (149.9993:255:0.2);
				\draw [->,red](-1.8068,0.3482) arc (-15.0089:120:0.2);
				\draw [->,red](-2.1,-0.5732) arc (-120.0007:15:0.2);
				\draw [->,red](-2.0518,-0.2068) arc (105.0089:210:0.2);
				\draw [->,red](-1.3414,-0.1414) arc (-135:-15:0.2);
				\draw [->,red](-1.0068,0.0518) arc (15.0089:135:0.2);
				\draw [->,red](1.0068,-0.0518) arc (-164.9911:-45:0.2);
				\draw [->,red](1.3414,0.1414) arc (45:165:0.2);
				\draw [->,red](1.8068,-0.3482) arc (164.9911:300:0.2);
				\draw [->,red](2.1732,-0.5) arc (-30.0007:75:0.2);
				\draw [->,red](2.1,0.5732) arc (59.9993:195:0.2);
				\draw [->,red](2.0518,0.2068) arc (-74.9911:30:0.2);
				\draw [->,red](-0.1932,-0.0518) arc (-164.9911:-15:0.2);
				\draw [->,red](0.2,0) arc (0:45:0.2);
				\draw [->,red](-0.1414,0.1414) arc (135:180:0.2);
				\draw [->,red](-0.1414,1.2414) arc (135:210:0.2);
				\draw [->,red](-0.1,0.9268) arc (-120.0007:-60:0.2);
				\draw [->,red](0.1732,1) arc (-30.0007:45:0.2);
			
				\draw [->,red](0.3482,1.5068) arc (-105.0089:30:0.2);
				\draw [->,red](0.5,1.8732) arc (59.9993:165:0.2);
			\draw [->,red](-0.5732,1.8) arc (149.9993:285:0.2);
			\draw [->,red](-0.2,1.7) arc (0:120:0.2);
			\end{tikzpicture}
		\end{center}
		There are precisely 6 distinct $\mathbb{D}$-triples in $\chi$. Namely, we have
		\begin{align*}
			d_1 &= (x,y_1,y_2), & d'_1 &= (x',y'_1,y'_2), & d''_1 &= (x'',y''_1,y''_2)\\
			d_2 &= (x,y_2,y_1), &	d'_2 &= (x',y'_2,y'_1), & d''_2 &= (x'',y''_2,y''_1).
		\end{align*}
		In addition, we have
		\begin{align*}
			\mathcal{W}(d_1,d'_1) &= \{w=\alpha_1\beta_2^{-1}\beta_3\beta_7^{-1}\gamma_2, \mu_0(w)=\alpha_2^{-1}\beta_1\beta_5^{-1}\beta_4^{-1}\beta_6\gamma_1^{-1}\} \\
			\mathcal{W}(d_1,d'_2) &= \{\mu_2(w)=\alpha_1\beta_2^{-1}\beta_3\beta_7^{-1}\gamma_4, \mu_0\mu_2(w)=\alpha_2^{-1}\beta_1\beta_5^{-1}\beta_4^{-1}\beta_6\gamma_3^{-1}\} \\
			\mathcal{W}(d_2,d'_1) &= \{\mu_1(w)=\alpha_3\beta_2^{-1}\beta_3\beta_7^{-1}\gamma_2, \mu_0\mu_1(w)=\alpha_4^{-1}\beta_1\beta_5^{-1}\beta_4^{-1}\beta_6\gamma_1^{-1}\} \\
			\mathcal{W}(d_2,d'_2) &= \{\mu_1\mu_2(w)=\alpha_3\beta_2^{-1}\beta_3\beta_7^{-1}\gamma_4, \mu_0\mu_1\mu_2(w)=\alpha_4^{-1}\beta_1\beta_5^{-1}\beta_4^{-1}\beta_6\gamma_3^{-1}\} \\
			\mathcal{W}(d_i, d''_j) &= \mathcal{W}(d'_i, d''_j) = \mathcal{W}(d_i, d_j)=\mathcal{W}(d'_i, d'_j)=\mathcal{W}(d''_i, d''_j)=\emptyset
		\end{align*}
		for any $i,j \in \{1,2\}$. Thus, we have $|\Wchi|=8$.
	\end{exam}

	\subsection{Periodic projective resolutions arising from $\Wchi$}
	The strings in $\Wchi$ give rise to string modules with periodic minimal projective resolutions of period 4, which in turn give rise to tubes of rank 2 in the Auslander-Reiten quiver, as the next few results will show.
	
	\begin{thm}
		Let $w=\alpha w' \gamma \in \Wchi$. Then $M(w)$ has a periodic minimal projective resolution
		\begin{equation*}
			\cdots \rightarrow P_3 \rightarrow P_2 \rightarrow P_1 \rightarrow P_0 \rightarrow P_3 \rightarrow P_2 \rightarrow P_1 \rightarrow P_0 \rightarrow M(w) \rightarrow 0
		\end{equation*}
		with
		\begin{equation*}
			\Omega^2(M(w)) = M(\mu_1\mu_2(w)) \qquad \text{and} \qquad \Omega^4(M(w)) = M(w).
		\end{equation*}
		In addition, $\Omega(M(w))$ and $\Omega^3(M(w))$ are the following.
		\begin{enumerate}[label=(\alph*)]
			\item If $\alpha,\gamma \in Q_1$ then
			\begin{equation*}
				\Omega(M(w)) = M(\mu_0\mu_2(w)) \qquad \text{and} \qquad 	\Omega^3(M(w)) = M(\mu_0\mu_1(w)).
			\end{equation*}
			\item If $\alpha,\gamma \in Q_1\inv$ then
			\begin{equation*}
				\Omega(M(w)) = M(\mu_0\mu_1(w)) \qquad \text{and} \qquad 	\Omega^3(M(w)) = M(\mu_0\mu_2(w)).
			\end{equation*}
			\item If $\alpha \in Q_1\inv$ and $\gamma \in Q_1$ then
			\begin{equation*}
				\Omega(M(w)) = M(\mu_0\mu_1\mu_2(w)) \qquad \text{and} \qquad 	\Omega^3(M(w)) = M(\mu_0(w)).
			\end{equation*}
			\item If $\alpha \in Q_1$ and $\gamma \in Q_1\inv$ then
			\begin{equation*}
				\Omega(M(w)) = M(\mu_0(w)) \qquad \text{and} \qquad \Omega^3(M(w)) = M(\mu_0\mu_1\mu_2(w)).
			\end{equation*}
		\end{enumerate}
	\end{thm}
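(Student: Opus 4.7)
The plan is to compute $\Omega(M(w))$ directly in each sub-case (a)--(d) and then deduce the formulas for $\Omega^2(M(w))$ and periodicity by iteration. Since $\mu_0, \mu_1, \mu_2$ are pairwise commuting involutions on $\Wchi$ (as established in Section~\ref{sec:Involutions}), once the formula for $\Omega$ is in hand in each case, the remaining equalities $\Omega^2(M(w)) = M(\mu_1\mu_2(w))$ and $\Omega^4(M(w)) = M(w)$ reduce to straightforward manipulations with these involutions. Specifically, applying $\Omega$ in Case~(a) produces a string falling under Case~(b), and two such applications give $\Omega^2(M(w)) = M(\mu_0\mu_1\mu_0\mu_2(w)) = M(\mu_1\mu_2(w))$; two further applications close the cycle at $\Omega^4$.

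First I would identify the projective cover $\pi\colon P \twoheadrightarrow M(w)$. The top of $M(w)$ has a simple summand at each peak of the string: each interior position $i$ with $\alpha_i \in Q_1^{-1}$ and $\alpha_{i+1} \in Q_1$, together with $b_0$ precisely when $\alpha \in Q_1$ and $b_n$ precisely when $\gamma \in Q_1^{-1}$. By properties (W2) and (W3), every interior peak and valley of $w$ sits at an edge of $\chi$, so the associated projective is biserial with both uniserial branches consumed by $M(w)$. At the endpoints the $\mathbb{D}$-triple structure forces $P(y_1)$ and $P(y'_1)$ to be uniserial of composition $S(y_i), S(x), S(y_i)$, while $P(x)$ and $P(x')$ are triserial with the two ``$y$-arms'' of length exactly $2$ (thanks to $\val = 2$ and $\mathfrak{m}=1$ at the $\mathbb{D}$-triple vertices).

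Next I would identify $\Omega(M(w)) = \Ker \pi$ as an explicit string module. In the interior, at each peak sitting at an edge $e$ the local kernel contribution is $\soc P(e)$, and at each valley the two adjacent peak-projectives are identified modulo the cycle-completion at that vertex; the net effect on the interior substring $w'$ is exactly the $\mu_0$-operation, producing $\overline{w}'$. For the boundary in Case~(a), the uniserial $P(y_1)$ contributes its socle, joined into the interior by the other letter at $y_1$, which is $\overline{\alpha}$---this is the $\mu_0$-flip on the left. On the right in Case~(a), the last interior peak sits at $x'$ and its $y'_1$-arm is consumed by $M(w)$; the only arm of $P(x')$ available to the kernel is the $y'_2$-arm, so the syzygy's string terminates at $y'_2$ with the letter dictated by $\mu_2$. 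Combining, $\Omega(M(w)) = M(\mu_0\mu_2(w))$ in Case~(a). Cases (b)--(d) proceed by symmetric boundary analyses: a direct endpoint letter contributes $\mu_0$ via a uniserial $P(y_i)$'s socle, while an inverse endpoint letter contributes $\mu_1$ or $\mu_2$ (depending on which side) by forcing the tail into the free arm of $P(x)$ or $P(x')$.

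The main obstacle will be the boundary analysis at the triserial projectives $P(x)$ and $P(x')$. Verifying that the kernel ``exits'' through the $y_2$- or $y'_2$-arm (rather than via the third non-$\mathbb{D}$-triple arm or terminating prematurely) requires careful bookkeeping of which branches of the triserial projective are used by $M(w)$ and which are free, and then tracing the cycle-completion along the free arm. The hypotheses $\val = 2$ and $\mathfrak{m} = 1$ in the definition of a $\mathbb{D}$-triple are essential here, since they bound the length of each $y$-arm of $P(x)$ at exactly $2$ and thereby force the kernel tail into precisely the complementary $\mathbb{D}$-arm. Once the boundary calculation is carried out in Case~(a), the remaining cases and the iteration statements follow by symmetry and by the involution relations.
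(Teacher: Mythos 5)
Your proposal is correct and follows essentially the same route as the paper's own proof: in each of the four sign-cases you identify the projective cover as a direct sum of peak projectives (uniserial $P(y_1)$, $P(y_1')$ at direct endpoints, triserial $P(x)$, $P(x')$ at inverse endpoints, biserial at interior peaks by (W2)/(W3)), compute the kernel by tracing cycle completions to obtain $\Omega(M(w))$ in terms of $\mu_0$, $\mu_1$, $\mu_2$, and then iterate using the observation that $\Omega$ moves between cases (a) and (b), or between (c) and (d), together with commutativity of the involutions. The one place you wave your hands a little, the boundary analysis at the triserial projectives, is exactly where the paper also does the bulk of its work (exhibiting $\rad P(x')/\soc P(x')$ explicitly and tracking which arm the syzygy enters), so your identification of that step as the crux is accurate.
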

	\begin{proof}
		Suppose $w=\alpha w' \gamma \in \mathcal{W}(d,d')$ for some $\mathbb{D}$-triples $d=(x,y_1,y_2)$ and $d'=(x',y'_1, y'_2)$. We will begin by calculating the first syzygy in all four cases, and then show that this implies the results for the second, third and fourth syzygies.
	
		Case (a): $\alpha,\gamma \in Q_1$. In this case, $w'=w^-_1 w^+_2 \ldots w^-_n$ for some direct strings $w^+_i$ and inverse strings $w^-_i$. We then have
		\begin{equation*}
			P_0= P(y_1) \oplus \bigoplus_{i=1}^{\frac{n-1}{2}} P(s(w^+_{2i})) \oplus P(x').
		\end{equation*}
		The indecomposable projective $P(y_1)$ is uniserial radical cube zero such that we have an exact sequence
		\begin{equation} \label{eq:seqP(y_1)}
			0 \rightarrow \rad M(\overline{\alpha}) \rightarrow P(y_1) \rightarrow M(\alpha) \rightarrow 0,
		\end{equation}
		where $\overline{\alpha}$ is as defined in Section~\ref{sec:Involutions}. Each $P(s(w^+_{2i}))$ is biserial such that we have exact sequences
		\begin{equation} \label{eq:seq-+-}
			0 \rightarrow \rad M((\overline{w}^-_{2i-1})\inv(\overline{w}^+_{2i})\inv) \rightarrow P(s(w^+_{2i})) \rightarrow M(w^-_{2i-1}w^+_{2i}) \rightarrow 0,
		\end{equation}
		where $\overline{w}^-_{2i-1}$ and $\overline{w}^+_{2i}$ are as defined in Section~\ref{sec:Involutions}. The indecomposable projective $P(x')$ is triserial such that
		\begin{equation} \label{eq:heartP(x')}
			\rad P(x') / \soc P(x') = M(w'') \oplus S(y'_1) \oplus S(y'_2)
		\end{equation}
		where $w''$ is the inverse string obtained from the word $\overline{w}^-_n w^-_n$ by removing the first and last symbols.
		
		Since $M(w)$ is a string module, one can deduce from the above exact sequences that 
		\begin{equation*}
			\Omega(M(w)) = M(\overline{\alpha} (\overline{w}^-_1)\inv(\overline{w}^+_2)\inv\ldots(\overline{w}^-_n)\inv\overline{\gamma}^\ast),
		\end{equation*}
		where $\overline{\gamma}^\ast$ is the unique formal inverse such that $t(\overline{\gamma}^\ast)=y'_2$. Thus, $\Omega(M(w))=M(\mu_0\mu_2(w))$, as required.
		
		Case (b): $\alpha,\gamma \in Q_1\inv$. This is similar to Case (a). We instead have $w'=w^+_1 w^-_2 \ldots w^+_n$ for some direct strings $w^+_i$ and inverse strings $w^-_i$. So
		\begin{equation*}
			P_0= P(x) \oplus \bigoplus_{i=1}^{\frac{n-1}{2}} P(t(w^-_{2i})) \oplus P(y'_1).
		\end{equation*}
		In addition, we have 
		\begin{equation} \label{eq:heartP(x)}
			\rad P(x) / \soc P(x) = S(y_1) \oplus S(y_2) \oplus M(w''),
		\end{equation}
		where $w''$ is the direct string obtained from the word $w^+_1\overline{w}^+_1$ by removing the first and last symbols. In addition, we have exact sequences
		\begin{equation} \label{eq:seq+-+}
			0 \rightarrow \rad M((\overline{w}^-_{2i})\inv(\overline{w}^+_{2i+1})\inv) \rightarrow P(t(w^-_{2i})) \rightarrow M(w^-_{2i}w^+_{2i+1}) \rightarrow 0,
		\end{equation}
		\begin{equation} \label{eq:seqP(y'_1)}
			0 \rightarrow \rad M(\overline{\gamma}) \rightarrow P(y'_1) \rightarrow M(\gamma) \rightarrow 0,
		\end{equation}
		where $\overline{w}^\pm_{i}$ and $\overline{\gamma}$ are as defined in Section~\ref{sec:Involutions}. It follows that we have
		\begin{equation*}
			\Omega(M(w)) = M(\overline{\alpha}^\ast (\overline{w}^+_1)\inv(\overline{w}^-_2)\inv\ldots(\overline{w}^+_n)\inv\overline{\gamma}),
		\end{equation*}
		where $\overline{\alpha}^\ast$ is the unique arrow such that $s(\overline{\alpha}^\ast)=y_2$. Thus, $\Omega(M(w))=M(\mu_0\mu_1(w))$, as required.
		
		Case (c): $\alpha \in Q_1\inv$ and $\gamma \in Q_1$. We have $w'=w^+_1 w^-_2 w^+_3 \ldots w^-_n$ for some direct strings $w^+_i$ and inverse strings $w^-_i$. So
		\begin{equation*}
			P_0= P(x) \oplus \bigoplus_{i=1}^{\frac{n-2}{2}} P(t(w^-_{2i})) \oplus P(x').
		\end{equation*}
		Note that $\rad P(x') / \soc P(x')$ and $\rad P(x) / \soc P(x)$ are as in (\ref{eq:heartP(x')}) and (\ref{eq:heartP(x)}) respectively and we have exact sequences as in (\ref{eq:seq+-+}). Thus, we have
		\begin{equation*}
			\Omega(M(w)) = M(\overline{\alpha}^\ast (\overline{w}^+_1)\inv(\overline{w}^-_2)\inv(\overline{w}^+_3)\inv\ldots(\overline{w}^-_n)\inv\overline{\gamma}^\ast),
		\end{equation*}
		where $\overline{\alpha}^\ast$ is the unique arrow such that $s(\overline{\alpha}^\ast)=y_2$ and $\overline{\gamma}^\ast$ is the unique formal inverse such that $t(\overline{\gamma}^\ast)=y'_2$. Thus, $\Omega(M(w))=M(\mu_0\mu_1\mu_2(w))$, as required.
		
		Case (d): $\alpha \in Q_1$ and $\gamma \in Q_1\inv$. We have $w'=w^-_1 w^+_2 w^-_3 \ldots w^+_n$ for some direct strings $w^+_i$ and inverse strings $w^-_i$. So
		\begin{equation*}
			P_0= P(y_1) \oplus \bigoplus_{i=1}^{\frac{n}{2}} P(s(w^+_{2i})) \oplus P(y'_1).
		\end{equation*}
		Now note that we have exact sequences as in (\ref{eq:seqP(y_1)}), (\ref{eq:seq-+-}) and (\ref{eq:seqP(y'_1)}). So we have
		\begin{equation*}
			\Omega(M(w)) = M(\overline{\alpha} (\overline{w}^-_1)\inv(\overline{w}^+_2)\inv(\overline{w}^-_3)\inv\ldots(\overline{w}^+_n)\inv\overline{\gamma})=M(\mu_0(w)),
		\end{equation*}
		as required.
		
		To calculate the remaining syzygies we note that if $w$ is as in Case (a), then $\mu_0\mu_2(w) \in \Wchi$ is a string of the same form as in Case (b). On the other hand, if $w$ is as in Case (b), then $\mu_0\mu_1(w) \in \Wchi$ is a string of the same form as in Case (a). Thus, we have for Case (a)
		\begin{align*}
			\Omega^2(M(w)) &= M(\mu_0\mu_1\mu_0\mu_2(w))= M(\mu_1\mu_2(w)), \\
			\Omega^3(M(w)) &= M(\mu_0\mu_2\mu_1\mu_2(w))= M(\mu_0\mu_1(w)), \\
			\Omega^4(M(w)) &= M(\mu_0\mu_1\mu_0\mu_1(w))= M(w)
		\end{align*}
		and we have for Case (b)
		\begin{align*}
			\Omega^2(M(w)) &= M(\mu_0\mu_2\mu_0\mu_1(w))= M(\mu_1\mu_2(w)), \\
			\Omega^3(M(w)) &= M(\mu_0\mu_1\mu_1\mu_2(w))= M(\mu_0\mu_2(w)), \\
			\Omega^4(M(w)) &= M(\mu_0\mu_2\mu_0\mu_2(w))= M(w).
		\end{align*}
		Similarly, if $w$ is as in Case (c), then $\mu_0\mu_1\mu_2(w) \in \Wchi$ is a string of the same form as in Case (d). Likewise, if $w$ is as in Case (d), then $\mu_0(w) \in \Wchi$ is a string of the same form as in Case (c). Thus, we have for Case (c)
		\begin{align*}
			\Omega^2(M(w)) &= M(\mu_0\mu_0\mu_1\mu_2(w))= M(\mu_1\mu_2(w)), \\
			\Omega^3(M(w)) &= M(\mu_0\mu_1\mu_2\mu_1\mu_2(w))= M(\mu_0(w)), \\
			\Omega^4(M(w)) &= M(\mu_0\mu_0(w))= M(w)
		\end{align*}
		and we have for Case (d)
		\begin{align*}
			\Omega^2(M(w)) &= M(\mu_0\mu_1\mu_2\mu_0(w))= M(\mu_1\mu_2(w)), \\
			\Omega^3(M(w)) &= M(\mu_0\mu_1\mu_2(w)),\\
			\Omega^4(M(w)) &= M(\mu_0\mu_1\mu_2\mu_0\mu_1\mu_2(w))= M(w).
		\end{align*}
	\end{proof}

	\begin{cor}
		Let $w \in \Wchi$. Then $\tau M(w) = M(\mu_1\mu_2(w))$ and $\tau^2 M(w)= M(w)$.
	\end{cor}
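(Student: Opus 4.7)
The plan is to deduce the corollary directly from the preceding theorem using the fact that $A$ is symmetric. Recall that for any (weakly) symmetric algebra the Auslander–Reiten translate coincides with the double syzygy, i.e.\ $\tau \cong \Omega^2$ on the stable category (this was already invoked just before Definition of double-stepped Green hyperwalks in Section~\ref{sec:ProjResolutions}). Since $A = KQ/I$ is a Brauer configuration algebra, it is symmetric, so this identification applies to every module in $\Mod* A$, and in particular to $M(w)$ for $w \in \Wchi$.

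First I would recall the theorem just proved: for any $w \in \Wchi$, the string module $M(w)$ has a minimal projective resolution which is periodic of period $4$, with
\begin{equation*}
	\Omega^{2}(M(w)) = M(\mu_1\mu_2(w)) \qquad \text{and} \qquad \Omega^{4}(M(w)) = M(w).
\end{equation*}
Applying $\tau = \Omega^2$ once then yields
\begin{equation*}
	\tau M(w) = \Omega^{2}(M(w)) = M(\mu_1\mu_2(w)),
\end{equation*}
which is the first assertion. Applying it twice gives
\begin{equation*}
	\tau^{2} M(w) = \Omega^{4}(M(w)) = M(w),
\end{equation*}
which is the second assertion.

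There is no real obstacle here: the content is entirely in the theorem, which has already done the syzygy bookkeeping in all four sign-pattern cases (a)–(d) for the leading and trailing letters of $w$. The only point worth remarking on in the write-up is that although $\Omega(M(w))$ depends on the case, $\Omega^2(M(w))$ is uniformly $M(\mu_1\mu_2(w))$ because the pairwise commuting involutions $\mu_0,\mu_1,\mu_2$ combine in such a way that the two applications of $\Omega$ introduce $\mu_0$ twice (hence cancel it, since $\mu_0^2 = \mathrm{id}$) and leave exactly one copy each of $\mu_1$ and $\mu_2$. Thus the corollary follows with one line of computation from the theorem together with the symmetric-algebra identity $\tau = \Omega^2$.
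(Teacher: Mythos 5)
Your proof is correct and is exactly the (unstated) reasoning the paper relies on: the paper leaves the corollary without proof, treating it as an immediate consequence of the preceding theorem together with the identity $\tau = \Omega^2$ for symmetric algebras, which is recalled earlier in Section~\ref{sec:HyperwalkTubes}. Nothing more needs to be said.
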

	
	\begin{exam}
		Let $\chi$ be the Brauer configuration in Example~\ref{ex:WchiFinite} and let $A$ be the symmetric special multiserial algebra corresponding to $\chi$. Let $w=\alpha_1\beta_2^{-1}\beta_3\beta_7^{-1}\gamma_2 \in \Wchi$. Then we have a projective resolution
		\begin{multline*}
			\cdots\rightarrow P(x) \oplus P(z_3) \oplus P(y'_1) \rightarrow P(y_2) \oplus P(z_1) \oplus P(x') \rightarrow  \\
			\rightarrow P(x) \oplus P(z_3) \oplus P(y'_2) \rightarrow P(y_1) \oplus P(z_1) \oplus P(x') \rightarrow M(w) \rightarrow 0
		\end{multline*}
		with syzygies
		\begin{align*}
			\Omega(M(w))&=M(\mu_0\mu_2(w)),		&	\Omega^3(M(w))&=M(\mu_0\mu_1(w)),\\
			\Omega^2(M(w))&=M(\mu_1\mu_2(w)),	&	\Omega^4(M(w))&=M(w).
		\end{align*}
	\end{exam}
	
	\subsection{Direct hyperstrings} \label{sec:Hyperstrings}
	In special mulitserial algebras (and other generalisations of special biserial algebras), one can obtain generalisations of string modules. We will need a particular class of such generalisations in the subsection that follows, which we call \emph{(direct) hyperstring modules} and will present here.
	
	Given a symmetric special multiserial algebra $A$ associated to a Brauer configuration $\chi$, we will define a set of relations $\gg$ and $\ggg$ on the set $\Wchi$. For any $w,w'\in\Wchi$, we say that $w \gg w'$ if all of the following hold.
	\begin{enumerate}[label=(R\arabic*)]
		\item There exists $(x,t(w),s(w')) \in \Dchi$.
		\item $w=w_1 w_0 \alpha$ and $w'=\alpha' w'_0 w'_1$ for some non-zero substrings $w_1$ of $w$ and $w'_1$ of $w'$, and for some substring $w'_0=w_0\inv$ common to both $w$ and $w'$ (up to inverse) that is of maximal length.
		\item The last symbol of $w_1$ is an arrow and the first symbol of $w'_1$ is an arrow.
	\end{enumerate}
	We say that $w \ggg w'$ if and only if $w,w'\in\Wchi$ and $w'=\mu_1\mu_2(w\inv)$.
	
	\begin{defn}
		Let $\bw=(w_1, \ldots w_m)$ be a tuple such that for each $i$, we have $w_i \in \Wchi$ and $w_i \gg w_{i+1}$ or $w_i \ggg w_{i+1}$. Then we call $\bw$ a \emph{direct hyperstring} of the algebra $A$.
	\end{defn}
	
	Given a direct hyperstring $\bw=(w_1, \ldots w_m)$, one obtains a \emph{hyperstring module} $M(\bw)$ as follows. The underlying vector space structure of $M(\bw)$ is the same as
	\begin{equation*}
		\bigoplus_{i=1}^m M(w_i) = \bigoplus_{i=1}^m \langle b_{i,0}, \ldots, b_{i,n_i} \rangle
	\end{equation*}
	For each $i$, write
	\begin{equation*}
		w_i = \alpha_i \beta_{i, 2}\ldots \beta_{i,n_i-1} \gamma_i
	\end{equation*}
	and note that $\gamma_i \in Q_1$ if and only if $\alpha_{i+1} \in Q_1\inv$. We can then define linear maps $\phi_i \in \Hom_K(M(w_i), M(w_{i+1}))$ by
	\begin{equation*}
		\phi_i(b_{i,j})=
		\begin{cases}
			b_{i+1, 0}	& \text{if } \alpha_{i+1}\in Q_1\inv \text{ and } j=n_i-1 \\
			b_{i+1, 1}	& \text{if } \gamma_i\in Q_1\inv \text{ and } j=n_i \\
			0				& \text{otherwise.}
		\end{cases}
	\end{equation*}
	The action of $\delta \in A$ on $M(\bw)$ is then defined by
	\begin{equation*}
		(v_1,\ldots,v_m) \delta = 
		\begin{cases}
			(v_1\delta,\ldots, v_i\delta, v_{i+1}\delta + \phi_i(v_i), \ldots, v_m\delta)	&
			\text{if } \delta=\gamma_i\inv \text{ or } \delta=\alpha_{i+1}\inv, \\
			(v_1 \delta,\ldots,v_m \delta)	&	\text{otherwise},
		\end{cases}
	\end{equation*}
	for each $(v_1,\ldots,v_m) \in M(\bw)$, where $v_i \delta$ is defined by the action of $\delta$ on $M(w_i)$. Thus, each $\delta \in A$ that is neither $\gamma_i\inv$ nor $\alpha_{i+1}\inv$ for some $i \geq 1$ has a diagonal action on $M(\bw)$, whereas each $\gamma_i\inv$ or $\alpha_{i+1}\inv$ has a lower triangular action on $M(\bw)$. One can therefore view a (direct) hyperstring module as a number of string modules that have been `glued' together by the action of some arrow incident to a truncated edge in some $\mathbb{D}$-triple.
	
	The reasoning for calling these objects \emph{direct} hyperstrings is that we have defined them as strings (or string modules) glued together in some sort of `direct' arrangement (analogous to direct strings). One could easily generalise this definition to more of a `string-like' arrangement, where we can glue together string modules and `formal inverses' of string modules. We will be exploring this notion in a forthcoming paper. For now, we remark that direct hyperstrings modules correspond to symmetric clannish band modules in a related clannish algebra (see Appendix~\ref{sec:AppClans} and in particular Section~\ref{sec:HyperstringBand} for details). Consequently, we have the following.
	
	\begin{thm}[\cite{Clans},\cite{Deng},\cite{ClanMaps}] \label{thm:indecHyperstring}
		The module $M(\bw)$ is indecomposable for any direct hyperstring $\bw$.
	\end{thm}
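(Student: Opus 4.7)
The plan is to translate the hyperstring data $\bw$ into the clannish setting outlined in Appendix~\ref{sec:AppClans} and to invoke the classification of indecomposable clannish modules from \cite{Clans,Deng,ClanMaps}. The first step is to associate to $A$ a clannish algebra $A^{\mathrm{cl}}$ whose underlying quiver is obtained by collapsing each $\mathbb{D}$-triple $(x,y_1,y_2)$ in $\chi$: the two truncated edges $y_1,y_2$ become a single vertex carrying a \emph{special} loop, and the remaining structure of $\chi$ persists. The point of this reduction is that the ``choice of $y_1$ versus $y_2$'' at the endpoints of a string $w_i \in \Wchi$ (controlled by $\mu_1,\mu_2$) becomes precisely the choice of eigenvalue/orientation at a special loop, which is the defining combinatorial feature of clannish band data.

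Next, I would check that a direct hyperstring $\bw=(w_1,\dots,w_m)$ corresponds to a well-defined $A^{\mathrm{cl}}$-word in the sense of clannish combinatorics. The two gluing relations $w_i\gg w_{i+1}$ and $w_i\ggg w_{i+1}$ are designed for exactly this: condition (R1) forces the concatenation point to live at a special loop coming from a $\mathbb{D}$-triple, (R2) ensures the maximal common subword that would collapse is cancelled so that no repetition of the clan-letter occurs, and (R3) guarantees the concatenation is an honest alternation of direct/inverse arrows on the clan side. The $\ggg$-case, where $w'=\mu_1\mu_2(w^{-1})$, similarly corresponds to traversing a special loop in the opposite orientation at one end of a clannish symmetric word. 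With these checks in place, $\bw$ lifts to a clannish (possibly asymmetric) string or to a symmetric band in $A^{\mathrm{cl}}$.

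I would then show that the vector-space description of $M(\bw)$ --- the direct sum $\bigoplus M(w_i)$ with the diagonal action of all arrows except the lower-triangular action of the gluing arrows $\gamma_i^{-1}$ or $\alpha_{i+1}^{-1}$ --- is exactly the image of the corresponding clannish module under the equivalence between $\Mod*A^{\mathrm{cl}}$ and $\Mod*A$ discussed at the end of Appendix~\ref{sec:AppClans}. In other words, gluing along a truncated edge of a $\mathbb{D}$-triple is precisely what happens when one passes a clannish string through a special loop and expresses the result as a module over the admissible presentation $A=KQ/I$. Once this identification is set up, the indecomposability of $M(\bw)$ is immediate from the classification cited in the theorem statement: clannish strings and symmetric bands yield indecomposable modules.

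The main obstacle will be the dictionary between the two presentations: one must verify carefully that the linear maps $\phi_i$ in the definition of $M(\bw)$ really do arise from the relation identifying $\varepsilon_{y_1}+\varepsilon_{y_2}$ with a single clannish idempotent and from the special-loop action, and that no additional endomorphisms of $M(\bw)$ are introduced in the translation (beyond those already present for the underlying clannish string or band). This is essentially a bookkeeping verification; the conceptual content is already in the references, but the identification of the admissible-ideal presentation used throughout this paper with the inadmissible one used for clans has to be spelled out, which is precisely what the later part of Appendix~\ref{sec:AppClans} is designed to do.
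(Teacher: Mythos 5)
Your strategy is the right one and matches the paper's: translate hyperstrings into clannish combinatorics and then cite the classification of indecomposable clannish modules from \cite{Clans}, \cite{Deng}, \cite{ClanMaps} (which is exactly what the paper does, with Appendix~\ref{sec:AppClans} serving as the bridge and Remark~\ref{rem:RelCorrespondence} linking the relation $\gg$ to the action of a special loop). However, there is a gap in your construction of $A^{\mathrm{cl}}$. You propose a \emph{global} reduction, collapsing every $\mathbb{D}$-triple and keeping the rest of $\chi$ intact. For a general symmetric special multiserial $A$ this does not produce a clannish algebra: wherever $\chi$ has an $n$-gon with $n>2$ away from the $\mathbb{D}$-triples, the quiver of $A^{\mathrm{cl}}$ has more than two arrows at the corresponding vertex, violating conditions (C2)/(C3). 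Since splitting/unsplitting special loops preserves the module category (and hence representation type), a wild $A$ would give a wild $A^{\mathrm{cl}}$, so $A^{\mathrm{cl}}$ cannot be clannish. Consequently you cannot directly invoke the clan classification for $A^{\mathrm{cl}}$-modules.

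The paper sidesteps this by working \emph{locally}: in Section~\ref{sec:rank2} it builds, for each relevant $w$, a small algebra $A_w = KQ^w/I^w$ (hereditary of type $\Dtilde$ or skewed-gentle) that genuinely is clannish, together with an exact functor $T_w\colon\Mod*A_w\rightarrow\Mod*A$ that preserves indecomposability but is \emph{not} an equivalence (only a restriction to a subcategory); the explicit example in Section~\ref{sec:ClanExample} carries out the admissible-versus-inadmissible dictionary you flag at the end. Your proposal would be repaired by replacing the global $A^{\mathrm{cl}}$ with such a local $A_w$ supported only on the polygons and vertices visited by $\bw$ (which, by the conditions entering $\gg$ and $\ggg$ together with (W2)/(W3), force all interior deeps and peaks to sit at $2$-gons), and by weakening your claimed ``equivalence'' to an indecomposability-preserving functor. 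Also note that the clannish words arising this way are always symmetric bands rather than asymmetric strings, because the gluings only ever occur at the special loops coming from $\mathbb{D}$-triples.
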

	
	\subsection{Extensions between string modules from $\Wchi$} \label{sec:WchiExt}
	Suppose $w \in \Wchi$. We will explicitly describe the spaces $\Ext_A^1(M(w),\tau M(w))$ in terms of extensions. By the Auslander-Reiten formula,
	\begin{equation*}	
		\dim_K \uEnd_A(M(w)) = \dim_K \Ext_A^1(M(w),\tau M(w)),
	\end{equation*}
	where $\uEnd_A(M(w))$ is the algebra of stable endomorphisms of $M(w)$ (that is, endomorphisms that do not factor through a projective-injective module). More generally, we will denote the stable space of homomorphisms between string modules $M(w)$ and $M(w')$ by $\uHom_A(M(w),M(w'))$, and we denote the corresponding set of admissible pairs by $\sap(w,w')$. Since each pair $(w_+,w_-) \in \sap(w,w)$ corresponds to a basis element of $\uEnd_A(M(w))$, each pair $(w_+,w_-) \in \sap(w,w)$ also corresponds to a basis element of $\Ext_A^1(M(w),\tau M(w))$. We will describe this correspondence.
	
	Let $(w_+, w_-) \in \sap(w,w)$. We may (up to inverse) write $w=w_1 w_+ w_2 w_- w_3$ for some (possibly zero) substrings $w_1$, $w_2$ and $w_3$ of $w$. We will use this notation throughout the remainder of this subsection, and we will begin with some straightforward consequences of this notation.
	
	\begin{lem} \label{lem:pemWchi}
		Suppose $w=w_1 w_+ w_2 w_- w_3 \in \Wchi$ and that $(w_+,w_-) \in \sap(w,w)$ with $w_+=w_-$ and $|w_+|, |w_-|\neq 0$. Then $w_1$ and $w_3$ are not zero strings.
	\end{lem}
	\begin{proof}
		For readability, we will write $w_+=w_-=w_0$. By (W4) in the definition of $\Wchi$, there exist $(x,y_1,y_2),(x',y'_1,y'_2)\in\Dchi$ such that $s(w)=y_1$ and $t(w)=y'_1$. Thus, if $w_1$ is a zero string then $s(w_0)=y_1$. On the other hand, if $w_3$ is a zero string, then $t(w_0)=y'_1$.
		
		By the definition of $\Dchi$, both $y_1$ and $y'_1$ are truncated edges and incident to respective non-truncated vertices $v$ and $v'$ with $\mathfrak{m}(v) = \mathfrak{m}(v')=1$. Thus, the word $\beta\alpha$ is not a string, where $\beta\alpha$ is the rotation of $\mathfrak{C}_v$ with $s(\alpha) = y_1$. Now if we assume that $w_1$ is a zero string, then this implies that we must have either $\beta\alpha$ or $\beta\inv\alpha\inv$ as a subword of $w_0w_2w_0$, as we have $s(w_0)=y_1$ and there exists no other arrow $\beta'\neq \beta$ such that $t(\beta')=y_1$. This means $w$ is not a string, so our assumption that $w_1$ is a zero string must be false. We can also deduce that $w_3$ cannot be a zero string by the same reasoning with the cycle $\mathfrak{C}_{v'}$.
	\end{proof}
	
	\begin{lem} \label{lem:pemiWchi}
		Suppose $w=w_1 w_+ w_2 w_- w_3 \in \Wchi$ and that $(w_+,w_-) \in \sap(w,w)$ with $w_+=w_-\inv$ and $|w_+|,|w_-|\neq 0$. Then $w_1$ is zero if and only if $w_3$ is zero. Moreover, $w_2$ is non-zero.
	\end{lem}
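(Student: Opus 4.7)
The plan is a direct combinatorial analysis exploiting the rigidity of truncated-edge neighborhoods in $\mathbb{D}$-triples, together with the conditions (W2) and (W3) of Definition~\ref{defn:Wchi}.

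First, I would handle $w_2 \neq 0$. Since $w_+ = w_-^{-1}$ and we may assume $w_-$ is non-zero (a zero string equals its own inverse, collapsing to the case $w_+ = w_-$ already handled by Lemma~\ref{lem:pemWchi}), write $w_- = \beta_1 \ldots \beta_k$. A vanishing $w_2$ would make $w_+ w_-$ appear as a subword of $w$ exhibiting the forbidden cancellation $\beta_1^{-1} \beta_1$ at the interface between the last letter of $w_+ = w_-^{-1}$ and the first letter of $w_-$. This contradicts the string definition, so $w_2 \neq 0$.

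Next, for the biconditional $w_1 = 0 \iff w_3 = 0$, I expect to show that $w_1 = 0$ is in fact incompatible with the hypotheses, so that $w_1 = 0 \Rightarrow w_3 = 0$ holds vacuously; the reverse then follows by the dual argument applied at the opposite end of $w$. Assume $w_1 = 0$. Then the first letter of $w_+$ coincides with the first letter $\alpha$ of $w$, with $s(\alpha) = y_1$ lying in some $\mathbb{D}$-triple $(x, y_1, y_2)$. The vertex $\kappa(g_1)$ has valency $2$ and multiplicity $1$, producing a length-$2$ cycle $\mathfrak{C}_{\kappa(g_1)}$ made up of the arrows $c_1 \colon x \to y_1$ and $d_1 \colon y_1 \to x$, together with the truncated-edge relation $d_1 c_1 d_1 \in \rho$. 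Case-splitting on whether $\alpha = d_1$ is direct or $\alpha = c_1^{-1}$ is inverse, and combining (W2) and (W3) (which forbid peaks and valleys at the $3$-gon $x$) with the length-$2$ zero relations at $\kappa(g_1)$ and the fact that $c_1 d_1$ is a maximal direct string at $x$ in the direction of $y_1$ (so $c_1 d_1 c_1$ is not a valid direct string, cf.\ Remark~\ref{M1Strings}), one checks that in either case the admissible choices of subsequent letters in $w_+$ force $|w_+| \leq 2$. Each of the two remaining sub-cases $|w_+| \in \{1, 2\}$ is then dispatched by producing either the truncated-edge relation or a cancelling pair as a subword of $w$, giving the contradiction. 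The dual argument for $w_3 = 0$ uses the $\mathbb{D}$-triple $(x', y'_1, y'_2)$ at the end of $w$.

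The main obstacle is the simultaneous management of the various combinatorial constraints: the $\mathbb{D}$-triple data (truncation, valency $2$, multiplicity $1$), the explicit form of the cycle and truncated-edge relation at $\kappa(g_1)$, the admissibility conditions that force both the first and last letters of $w_2$ to be direct, the conditions (W2) and (W3) at the $3$-gon $x$, and the string-avoids-relations condition at the valency-$2$ vertex. Confining $|w_+|$ to at most $2$ and then ruling out both remaining sub-cases without overlooking a configuration is the technical heart of the argument.
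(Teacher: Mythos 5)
There is a genuine gap in the biconditional part. You claim that $w_1 = 0$ is outright incompatible with the hypotheses, so that the equivalence holds vacuously at both ends. This is false: the paper's later development crucially relies on the case $|w_1| = |w_3| = 0$ with $w_+ = w_-^{-1}$ non-zero being realisable --- this is precisely the set $\sap^3_w$ of Section~\ref{sec:WchiExt}, and Lemma~\ref{lem:sap3} explicitly notes that $|w_+| > 2$ in that case. The corresponding hyperstring modules $M(\bw_3)$ and the exact sequences in $\es^3_w$ (and the skewed-gentle model built on $\ub_2$ in Section~\ref{sec:ClanExample}) are all non-degenerate. So your conclusion that ``$|w_+| \leq 2$, then contradiction'' cannot be right for arbitrary $w_1 = 0$.

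The root of the error is the scope of conditions (W2) and (W3) in Definition~\ref{defn:Wchi}: writing $w = \alpha\beta_1\ldots\beta_n\gamma$, these only constrain the interior pairs $\beta_i\beta_{i+1}$, \emph{not} the boundary pairs $\alpha\beta_1$ or $\beta_n\gamma$. Your bound $|w_+| \leq 2$ comes from forbidding a peak or deep at the $3$-gon $x$ immediately after $\alpha$, but that peak/deep sits at the pair $\alpha\beta_1$ and is therefore permitted; e.g.\ $w_+ = d_1\,d_j^{-1}\,c_j^{-1}\ldots$ with a peak at $x$ right after $\alpha = d_1$ is allowed and is how $|w_+| > 2$ occurs. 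The paper's proof does not try to rule out $w_1 = 0$; instead it argues $w_1 = 0 \Rightarrow w_3 = 0$ directly, using that $w_1 = 0$ forces $t(w_-) = s(w) = y_1$ to be the truncated edge of the $\mathbb{D}$-triple, after which a short (W2)/(W3)-and-relations check (applied one step \emph{inside} $w_-$, where (W2)/(W3) do hold) shows the string cannot continue beyond $w_-$, so $w_3 = 0$. Separately, your deferral of the zero case $w_+ = w_- = 0$ to Lemma~\ref{lem:pemWchi} in the $w_2 \neq 0$ argument is not quite sufficient on its own, since that lemma concludes $w_1, w_3 \neq 0$ rather than $w_2 \neq 0$, though this is a minor point compared with the main issue above.
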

	\begin{proof}
		Note that the first symbol of $w_-$ is the formal inverse of the last symbol of $w_+$. Now if $w_2$ is a zero string, then there exists a subword $\alpha\alpha\inv$ of $w$, which means that $w$ is not a string --- a contradiction. Thus, $w_2$ is non-zero.
	
		It remains to prove that $w_1$ is zero if and only if $w_3$ is zero. If $|w_1|=0$, then $s(w_+)=t(w_-)$ is a truncated edge in a $\mathbb{D}$-triple by the definition of $\Wchi$. This is possible only if $|w_3|=0$. The converse argument is identical, thus proving the lemma.
	\end{proof}
	
	\begin{lem}\label{lem:pmzWchi}
		Suppose $w=w_1 w_+ w_2 w_- w_3 \in \Wchi$ and that $(w_+,w_-) \in \sap(w,w)$ with $|w_+|=|w_-|=0$. Then $w_1$, $w_2$ and $w_3$ are non-zero.
	\end{lem}
	\begin{proof}
		Write $w_0=w_+=w_-$ and suppose for a contradiction that $w_2$ is a zero string. Then $w=w_1 w_0 w_3$, and $w_1$ and $w_3$ would then also have to be zero strings for $(w_0,w_0)$ to be an admissible pair of $w$, since $w_0$ cannot otherwise be a factor substring and image substring simultaneously. But then this would imply $|w|=0$, which violates property (W1) of $\Wchi$. So $|w_2| \neq 0$, as required.
	
		Now suppose for a contradiction that either $|w_1|=0$ or $|w_3|=0$. It then follows from the combinatorics defining elements of $\Dchi$ that we can only have $(w_0,w_0)\in\ap(w,w)$ if $(x,y_1,y_2)=(x',y'_1,y'_2)$ and both $|w_1|=0$ and $|w_3|=0$. In particular, the map corresponding to $(w_0,w_0)\in\ap(w,w)$ factors through $P(x)$, so $(w_0,w_0)\not\in\sap(w,w)$, which contradicts the lemma statement. Thus, neither $w_1$ nor $w_3$ are zero strings.
	\end{proof}
	
	The lemmata above show that we may partition the set $\sap(w,w)$ into five subsets, defined as follows.
	\begin{enumerate}[label = (\roman*)]
		\item $\sap^0_w = \{(w_+,w_-) \in \sap(w,w) : |w_+|=|w_-|=0\}$
		\item $\sap^1_w = \{(w_+,w_-) \in \sap(w,w) : w_+=w_- \neq w  \text{ and } |w_+|,|w_-|\neq0\}$
		\item $\sap^2_w = \{(w_+,w_-) \in \sap(w,w) : w_+=w_-^{-1} \text{ and } |w_+|,|w_-|,|w_1|, |w_3|\neq 0\}$
		\item $\sap^3_w = \{(w_+,w_-) \in \sap(w,w) : w_+=w_-^{-1},  |w_1|=|w_3|=0 \text{ and } |w_+|,|w_-|\neq0\}$
		\item $\sap^4_w = \{(w,w)\}$
	\end{enumerate}
	This defines five cases that we must consider. We will now prove a number of technical results that will be used to construct the corresponding short exact exact sequences.
		
	\begin{lem} \label{lem:sap1}
		Suppose $w=w_1 w_+ w_2 w_- w_3 \in \Wchi$ and that $(w_+,w_-) \in \sap^1_w$. Then there exist strings
		\begin{align*}
			w'_1 &= w_1 w_0 w_3\in \Wchi \text{ and} \\
			w'_2 &= w_1 w_0 w_2 w_0 w_2 w_0 w_3\in \Wchi,
		\end{align*}
		where $w_0=w_+=w_-$.
	\end{lem}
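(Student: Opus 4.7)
The plan is to verify directly that $w'_1$ and $w'_2$ lie in $\Wchi$ by establishing they are strings in $\str_A$ and satisfy each of the conditions (W1)--(W4). The common starting observation is that every length-two juxtaposition appearing in $w'_1$ or $w'_2$ already occurs as a length-two subword of $w$: the boundary transitions $w_1|w_0$, $w_0|w_2$, $w_2|w_0$ and $w_0|w_3$ are exactly the transitions $w_1|w_+$, $w_+|w_2$, $w_2|w_-$ and $w_-|w_3$ of $w$, while the internal transitions of $w_0$ and $w_2$ are unchanged. In particular, the length-two vanishing relations of $I$ are automatically avoided; since the extremal letters of $w'_1,w'_2$ coincide with those of $w$, and since every peak/deep transition is likewise inherited, conditions (W2), (W3), and (W4) follow directly.

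For (W1), I would split on $|w_0|$. When $|w_0|\geq 1$, the inequalities $|w'_1|,|w'_2|>2$ follow at once from Lemma~\ref{lem:pemWchi}. When $|w_0|=0$, the factor-substring condition on $w_+$ combined with (W3) excludes $|w_1|=1$: a single-letter $w_1=\alpha$ would force $\alpha\in Q_1^{-1}$ by the factor condition at position $2$, and would force the first letter of $w_2$ to lie in $Q_1$ (again by the factor condition), yielding an inverse-to-arrow transition at $t(\alpha)$; but $t(\alpha)=x$ is the $3$-gon of the $\mathbb{D}$-triple associated to the prefix of $w$, so $|t(\alpha)|=3\neq 2$, contradicting (W3). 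A symmetric argument with the image condition on $w_-$ and (W2) gives $|w_3|\geq 2$; moreover $|w_2|\geq 1$, since otherwise $w_+$ and $w_-$ would occupy the same position in $w$, which at an interior vertex is incompatible with simultaneously being factor and image, and at a boundary position would force $|w_1|=0$ or $|w_3|=0$, contradicting Lemma~\ref{lem:pemWchi}. Thus $|w'_1|,|w'_2|>2$ in all cases.

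The remaining task is to verify that no cycle-power relation in $I$ is violated, which is the genuine difficulty for $w'_2$ since the duplicated block $w_0w_2$ is precisely the kind of operation that could push a maximal directed sub-walk past the bound $\mathfrak{m}(v)\cdot\val(v)$ at some cycle vertex $v$. The key ingredient is the factor/image boundary structure of $w_0$ inside $w$: the letter of $w$ immediately preceding $w_+$ and the letter immediately following $w_-$ are both in $Q_1^{-1}$, while the letter immediately following $w_+$ and the letter immediately preceding $w_-$ both lie in $Q_1$. Using this, any potentially overlong directed subword in $w'_2$ must consist of consecutive arrow-letters from several $w_0$ and $w_2$ blocks, and by tracking how such a subword sits inside $w$ one reduces to showing that the stability of $(w_+,w_-)\in\sap^1_w$ forbids the extremal configuration: if the putative over-long directed sub-walk were realisable, the morphism of $\End_A M(w)$ attached to $(w_+,w_-)$ would factor through a projective-injective $P(v)$, contradicting $(w_+,w_-)\in\sap^1_w$.

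The main obstacle of the proof is precisely this stability step. The inherited length-two transitions handle all local obstructions by combinatorial bookkeeping, and (W1)--(W4) are routine once the right case split is chosen; what requires genuine use of the hypothesis $(w_+,w_-)\in\sap^1_w$ is the verification that the cycle-power relations are not activated upon duplicating $w_0w_2$. This is where the non-factorisation-through-projectives condition is essential, and it is the argument that carries the actual content of the lemma.
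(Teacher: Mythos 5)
Your high-level strategy (split on the shape of $w_0$, use the stability hypothesis to rule out cycle-power relations, and invoke Lemma~\ref{lem:pemWchi} for the boundary lengths) is aligned with the paper's approach, but several steps as written don't hold up.

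The ``common starting observation'' that every length-two juxtaposition in $w'_1, w'_2$ already occurs in $w$ is false when $|w_0|=0$. In that case $w'_1 = w_1 w_3$ acquires the new junction between the last letter of $w_1$ (an inverse, by the factor condition on $w_+$) and the first letter of $w_3$ (an inverse, by the image condition on $w_-$), a juxtaposition never present in $w$; similarly $w'_2$ acquires a new $w_2|w_2$ junction. This is exactly the subcase the paper works hardest on, and it cannot be waved away as inherited. The same objection then propagates to your claim that (W2)--(W4) ``follow directly.''

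Your argument that $|w_1|\geq 2$ when $|w_0|=0$ misreads (W2)/(W3). Those conditions constrain only the $\beta_i\beta_{i+1}$ transitions inside the middle block of $w=\alpha\beta_1\cdots\beta_n\gamma$; they do \emph{not} constrain the junction $\alpha|\beta_1$. Indeed, in Example~\ref{ex:WchiFinite} the string $\mu_0(w)=\alpha_2^{-1}\beta_1\cdots$ has $\alpha_2^{-1}\in Q_1^{-1}$, $\beta_1\in Q_1$, and $t(\alpha_2^{-1})=s(\beta_1)=x$ a $3$-gon, yet $\mu_0(w)\in\Wchi$. So the step ``$\ldots$ contradicting (W3)'' is not available, and the argument for (W1) breaks.

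You restrict the cycle-power difficulty to $w'_2$, but the paper's proof needs the stability contradiction for $w'_1$ as well: when $w_0$ is inverse (including zero), the terminal inverse run of $w_1$, then $w_0^{-1}$, then the initial inverse run of $w_3$ merge into a single longer inverse subword of $w'_1$ that was not present in $w$, and this is exactly where a relation $\mathfrak{C}_{v}^{\mathfrak{m}(v)}$ could be exceeded. Finally, you never address the case where $w_0$ is neither direct nor inverse --- the paper dispatches this separately (then $w_0w_2$ is a band or band-power and the result is immediate) --- and the key factoring-through-$P(v)$ argument is only sketched, whereas carrying it out in each of the remaining subcases is the substance of the lemma. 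As it stands, the proposal identifies the right ingredients but has genuine gaps in the case $|w_0|=0$, in the (W1) bound, and in the treatment of $w'_1$.
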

	\begin{proof}
		If $w_0$ is neither a direct nor inverse string, then the result is straightforward, as then the existence of $w_0w_2w_0 \in \str_A$ implies that $w_0w_2$ is a band (or a proper power of a cyclic substring), and hence $w_1(w_0w_2)^r w_0 w_3 \in \Wchi$ for any $r \geq 0$. Thus we will assume that $w_0$ is direct or inverse.
		
		Firstly, we will prove the existence of $w'_1$. By the properties of admissible pairs, note that there exists an inverse substring $\delta_k\inv\ldots\delta_1\inv$ at the end of $w_1$, where $k$ is assumed to be maximal. Further note that there exists an inverse substring $\zeta_l\inv\ldots\zeta_1\inv$ at the start of $w_3$, where $l$ is assumed to be maximal. There are now two subcases to consider in this proof --- either $w_0$ is direct or $w_0$ is an inverse string. In the former case $s(w_0)$ is a peak of $w$ and $t(w_0)$ is a deep of $w$, and since $w_1 w_0, w_0 w_3 \in \str_A$, it follows that $w'_1=w_1 w_0 w_3 \in \str_A$.
		
		Now suppose instead that $w_0$ is an inverse string. In this subcase, we suppose for a contradiction that $w'_1 \not\in \str_A$. This is possible only if
		\begin{equation*}
			\zeta_1\ldots\zeta_l w_0\inv \delta_1\ldots\delta_k \not\in \str_A,
		\end{equation*}
		which is possible only if
		\begin{equation*}
			\cycle_{v,\zeta_1}^{\mult(v)} = \zeta_1\ldots\zeta_l w_0\inv \delta_1\ldots\delta_i
		\end{equation*}
		for some $0 \leq i < k$, where $v = \kappa(\sgerm(\zeta_1))$. It is easy to see from this that we have a commutative diagram
		\begin{equation*}
			 \xymatrix{
			 	M(w)
			 		\ar[r]^f
			 		\ar@{->>}[d]
				& M(w)	\\
			 	M(w_0\inv \delta_1\ldots\delta_i)
			 		\ar@{^{(}->}[r]
				& P(s(\zeta_1))
					\ar[u]
			}
		\end{equation*}
		where $f \in \End_A(M(w))$ corresponds to $(w_+,w_-) \in \ap(w,w)$. But then $(w_+,w_-) \not\in \sap(w,w)$, which completes the contradiction. So we must have $w'_1 \in \str_A$.  It is straightforward to see that we also have $w'_1 \in \Wchi$.
		
		Now we will prove the existence of $w'_2$. By the properties of admissible pairs, note that the symbols immediately succeeding $w_+$ and preceding $w_-$ in $w$ are both arrows. We have two cases to consider --- either $w_0$ is inverse or $w_0$ is a direct string. In the former case, we simply note that $t(w_0)$ is a peak of $w$ and that $t(w_2)$ is a deep of $w$. Since $w_0w_2w_0 \in \str_A$, it follows from this that $(w_0w_2)^r \in \str_A$ for any $r \geq 1$, which implies that $w'_2 \in \str_A$.
		
		Now suppose that $w_0$ is a direct string. This splits into two subcases. Namely, either $w_2$ is direct or not direct. We will work with the subcase that $w_2$ is not direct first. Let $\delta_1\ldots\delta_k$ and $\zeta_1\ldots \zeta_l$ be the direct strings at the start and end of $w_2$, where $k$ and $l$ are assumed to be maximal. Suppose for a contradiction that $w'_2$ does not exist. Since we know that $w_0w_2w_0 \in \str_A$, this is possible only if $w_2 w_0 w_2 \not\in \str_A$, which implies
		\begin{equation*}
			\cycle_{v,\zeta_1}^{\mult(v)} = \zeta_1\ldots\zeta_l w_0 \delta_1\ldots\delta_i
		\end{equation*}
		for some $0 \leq i < k$, where $v = \kappa(\sgerm(\zeta_1))$. It is then easy to see that the map corresponding to $(w_+,w_-) \in \ap(w,w)$ factors through $P(s(\zeta_1))$ and so $(w_+,w_-) \not\in \sap(w,w)$, which is a contradiction.
		
		We have the final subcase where $w_2$ is direct to consider. Write $w_2=\delta_1\ldots\delta_k$ (or write $k=0$ if $w_2$ is a zero string) and $w_0=\delta_{k+1}\ldots\delta_l$. Suppose for a contradiction that $w'_2 \not\in \str_A$. This is possible only if
		\begin{equation*}
			\cycle_{v,\delta_{k+1}}^{\mult(v)} = w_0 w_2 w_0 \delta_1\ldots\delta_i
		\end{equation*}
		for some $0 \leq i < l$, where $v = \kappa(\sgerm(\delta_{k+1}))$. It is then easy to see that the map corresponding to $(w_+,w_-) \in \ap(w,w)$ factors through $P(s(w_0))$ and so $(w_+,w_-) \not\in \sap(w,w)$, which is a contradiction. All possible cases have been considered, and hence we must have $w'_2 \in \str_A$. It is straightforward to see that we also have $w'_2 \in \Wchi$.
	\end{proof}
	
	\begin{lem} \label{lem:sap2}
		Suppose $w=w_1 w_+ w_2 w_- w_3 \in \Wchi$ and that $(w_+,w_-) \in \sap^2_w$. Then there exist strings
		\begin{align*}
			w'_1 &= w_3\inv w_+ w_2\inv w_-w_3 \in \Wchi \text{ and} \\
			w'_2 &= w_1 w_+ w_2 w_- w_1\inv \in \Wchi.
		\end{align*}
	\end{lem}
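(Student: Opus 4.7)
The plan is to verify for each of $w'_1$ and $w'_2$ separately that (i) the proposed expression lies in $\str_A$, and (ii) the resulting string satisfies the defining properties (W1)--(W4) of $\Wchi$. A preliminary observation that I would record first is that the admissibility of $(w_+, w_-) \in \sap^2_w$ combined with $w_+ = w_-^{-1}$ forces the junction identities
\begin{equation*}
s(w_3) = t(w_1) = s(w_+) = t(w_-) \qquad \text{and} \qquad s(w_2) = t(w_2) = t(w_+) = s(w_-).
\end{equation*}
The second identity, asserting that $w_2$ is a closed walk at $t(w_+)$, is precisely what permits $w_+ w_2\inv w_-$ to be concatenated vertex-wise inside $w'_1$; the first identity ensures that $w_3\inv$ (resp.\ $w_1\inv$) attaches correctly to the remainder of $w'_1$ (resp.\ $w'_2$).

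For (i), I would argue by contradiction using the template of the proof of Lemma~\ref{lem:sap1}. The only junctions in $w'_1$ and $w'_2$ not already present in $w$ are those at which $w_3\inv$ or $w_2\inv$ (resp.\ $w_1\inv$) meets an adjacent piece. If any such junction were to produce a subpath equal to a rotation of $\cycle_v^{\pm\mult(v)}$ for some vertex $v$, the resulting commutative square would express the endomorphism of $M(w)$ corresponding to $(w_+, w_-)$ as a map factoring through the projective-injective module $P(v)$, contradicting $(w_+, w_-) \in \sap^2_w$. The main obstacle here is the case analysis over whether the first and last symbols of $w_+$ (equivalently of $w_-$) are direct or inverse, since each of the four combinations produces a slightly different factorisation diagram and a different candidate cycle $\cycle_v$ at the vertex $t(w_+) = s(w_2) = t(w_2)$ or at $s(w_+) = t(w_-) = s(w_3) = t(w_1)$.

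Finally, for (ii): (W1) is immediate from $|w_1|, |w_3|, |w_+|, |w_-| > 0$. (W4) follows because the first and last symbols of $w'_1$ both arise from $w_3$, whence $s(w'_1) = t(w'_1) = t(w) = y'_1$ and $w'_1 \in \mathcal{W}(d', d')$; dually $w'_2 \in \mathcal{W}(d, d)$. For (W2) and (W3), peaks and deeps internal to each of the substrings $w_1, w_2, w_3, w_\pm$ and their formal inverses are inherited from $w$ at the same polygons, since inversion may exchange a peak with a deep but does not move the underlying vertex. The only candidate new peaks or deeps in $w'_1$ occur at the junction vertices $s(w_3)$ and $t(w_2)$; a short case analysis (again based on the direct/inverse character of the endpoints of $w_+$, using the admissibility constraints on $(w_+, w_-)$ in $w$) shows that each such junction in $w'_1$ coincides with an existing peak or deep of $w$ at the same polygon. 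Properties (W2)--(W3) for $w$ then yield the same for $w'_1$, and the argument for $w'_2$ is symmetric, this time via the junction vertex $t(w_1) = s(w_3)$.
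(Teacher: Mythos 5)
Your proposal follows the same overall strategy as the paper: take inverses of substrings of $w$ to see that most of $w'_1$ (and $w'_2$) already lies in $\str_A$, observe that any failure of the full concatenation to lie in $\str_A$ must come from a long direct or inverse path crossing the junctions, and derive a contradiction by exhibiting a factorization of the endomorphism corresponding to $(w_+,w_-)$ through an indecomposable projective, so that $(w_+,w_-)\notin\sap(w,w)$. The main difference is organizational. The paper splits into only two cases: either $w_-$ contains an arrow, in which case any would-be offending inverse path through $w_-$ is broken and the conclusion is immediate, or $w_-$ is purely inverse, which is the only case requiring the factoring-through-projective contradiction. Your four-way case analysis by the direct/inverse type of the first and last symbols of $w_+$ is strictly coarser than this: the case ``both endpoints are arrows'' mixes together the genuinely hard subcase ($w_+$ entirely direct, i.e.\ $w_-$ purely inverse) with easy subcases, so it does not cleanly isolate where the contradiction argument is actually needed. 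That is an inefficiency rather than an error, but it would make the write-up noticeably longer than the two-case split. One small slip: the projective you factor through should be $P(s(\zeta_1))$, the indecomposable projective indexed by a \emph{polygon} of $\chi$ (equivalently a vertex of $Q$), not ``$P(v)$'' for a vertex $v\in\Vchi$ of the Brauer configuration. Your verification of (W1) and (W4) is correct, and your reduction of (W2)--(W3) to the observation that inversion exchanges peaks with deeps at the same polygon, together with a check of the finitely many new junction pairs, is sound; the paper leaves this part at ``straightforward,'' so your extra detail is fine.
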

	\begin{proof}
		We will first prove the existence of $w'_1$. Since $w_+ w_2 w_- w_3 \in \str_A$, we know that
		\begin{equation*}
			w_3\inv w_-\inv w_2\inv w_+\inv =w_3\inv w_+ w_2\inv w_- \in \str_A.
		\end{equation*}
		Since $w_- w_3 \in \str_A$ and the first symbol of $w_3$ is a formal inverse, if $w_-$ has an arrow, it is easy to see that $w'_1 \in \str_A$. On the other hand, if $w_-$ is an inverse string, then let $\delta_k\inv\ldots\delta_1\inv$ be the inverse string (which is necessarily non-zero) at the end of $w_2\inv$ and let $\zeta_l\inv\ldots\zeta_1\inv$ be the inverse substring at the start of $w_3$, where $k$ and $l$ are assumed to be maximal. Suppose for a contradiction that $w'_1 \not\in \str_A$. Then
		\begin{equation*}
			\cycle_{v,\zeta_1}^{\mult(v)} = \zeta_1\ldots\zeta_l w_+ \delta_1\ldots\delta_i
		\end{equation*}
		for some $0 \leq i <k$, where $v = \kappa(\sgerm(\zeta_1))$. It is then easy to see that the map corresponding to $(w_+,w_-) \in \ap(w,w)$ factors through $P(s(\zeta_1))$ and so $(w_+,w_-) \not\in \sap(w,w)$, which is a contradiction. So $w'_1 \in \str_A$.
		
		The proof for the existence of $w'_2$ is similar to the proof for $w'_1$. It is also straightforward to see that both strings belong to $\Wchi$.
	\end{proof}
	
	\begin{rem} \label{rem:sap2}
		For any $(w_+,w_-) \in \sap^0_w \cup \sap^2_w$, we know that the first and last symbols of $w_2$ (which is non-zero) are arrows, and both the last symbol of $w_1$ and first symbol of $w_3$ are formal inverses. In particular, the symbol immediately preceding $w_\pm$ in $w$ (resp. $w\inv$) is a formal inverse if and only if the symbol immediately succeeding $w_\pm$ in $w$ (resp. $w\inv$) is an arrow.  In contrast to this, the symbols immediately preceding and succeeding $w_-$ in $w'_1$ and $w'_2$ of Lemma~\ref{lem:sap2} are either both arrows or both formal inverses. A consequence of this is that $w'_1 \neq w, w\inv$ and $w'_2 \neq w, w\inv$.
		
		By extension to this, $\mu_1(w'_1)\neq w, w\inv$ and $\mu_2(w'_2) \neq w, w\inv$, since $\mu_1$ and $\mu_2$ respectively replace the first and last symbols of a string in $\Wchi$ with a corresponding symbol (arrows are replaced with arrows, and formal inverses are replaced with formal inverses).
	\end{rem}
	
	\begin{lem}\label{lem:sap0}
		Suppose $w=w_1 w_+ w_2 w_- w_3 \in \Wchi$ and that $(w_+,w_-) \in \sap^0_w$. Then precisely one of the following statements is true.
		\begin{enumerate}[label=(\roman*)]
			\item There exist strings $w'_1 = w_1 w_3\in \Wchi$ and $w'_2 = w_1 w^2_2 w_3\in \Wchi$.
			\item There exist strings $w'_1 = w_3\inv w_2\inv w_3 \in \Wchi$ and $w'_2 = w_1 w_2 w_1\inv \in \Wchi$.
		\end{enumerate}
	\end{lem}
	\begin{proof}
		By Lemma~\ref{lem:pmzWchi}, $w_1$, $w_2$ and $w_3$ are non-zero strings. By Remark~\ref{rem:sap2}, $w_+$ is at a peak of the string, which by property (W3), implies that $|s(w_+)|=|s(w_-)|=2$. A consequence of this is that if $\tgerm(w_1) = \sgerm(w_3)$ then $\sgerm(w_2) = \tgerm(w_2) \neq \tgerm(w_1) = \sgerm(w_3)$.  On the other hand, if $\tgerm(w_1) \neq \sgerm(w_3)$, then $\sgerm(w_2) = \sgerm(w_3)$ and $\tgerm(w_2) = \tgerm(w_1)$. This outlines the two cases that we must consider.
		
		 So first suppose that $\tgerm(w_1) = \sgerm(w_3)$. Then the last symbol $\alpha$ of $w_2$ and the first symbol $\beta$ of $w_1\inv$ is such that $\alpha\beta$ is not a subpath of any cycle $\mathfrak{C}_v$ for any vertex $v \in \Vchi$. Thus, $\alpha\beta$ is a relation of the algebra and the string $w'_2$ in (ii) cannot exist. On the other hand, the string $w'_1$ from (i) exists by an identical proof to that used in Lemma~\ref{lem:sap1} with $w_0=w_+=w_-$ assumed to be inverse. Similarly, the string $w'_2$ from (i) exists by an identical proof to that used in Lemma~\ref{lem:sap1} with $w_0=w_+=w_-$ assumed to be direct.
		 
		Now suppose that $\tgerm(w_1) \neq \sgerm(w_3)$. Then the last symbol $\alpha$ of $w_1$ and the first symbol $\beta$ of $w_3$ is such that the path $\beta\inv\alpha\inv$ is not a subpath of any cycle $\mathfrak{C}_v$ for any vertex $v \in \Vchi$. Thus, $\beta\inv\alpha\inv$ is a relation of the algebra and the string $w'_1$ in (i) cannot exist. On the other hand, the strings $w'_1$ and $w'_2$ from (ii) exist by an identical proof to that used in Lemma~\ref{lem:sap2}.
	\end{proof}
	
	\begin{lem} \label{lem:sap3card}
		$|\sap^3_w| \leq 1$
	\end{lem}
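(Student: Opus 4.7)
The plan is to argue by contradiction: suppose $\sap^3_w$ contains two distinct pairs. Every element $(w_+, w_-) \in \sap^3_w$ is determined by its common length $k := |w_+| = |w_-|$, since the conditions $|w_1| = |w_3| = 0$ together with $w_+ = w_-\inv$ force $w_+$ to be the length-$k$ prefix of $w$ and $w_-$ to be the length-$k$ suffix (whose letters are the formal inverses of those of $w_+$ read in reverse). So I may take two distinct lengths $k_1 < k_2$ and write $w = \alpha_1 \alpha_2 \ldots \alpha_m$, noting that the factor substrings are nested as prefixes.

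First I would extract two local constraints from the admissibility of the shorter pair $(w_+^{(1)}, w_-^{(1)})$. The factor substring condition at the right boundary of $w_+^{(1)}$ forces $\alpha_{k_1 + 1} \in Q_1$, while the image substring condition at the left boundary of $w_-^{(1)}$ forces $\alpha_{m - k_1} \in Q_1$. The indices are meaningful because $1 \leq k_1 < k_2 \leq m$, and in fact $k_2 < m$ since the middle piece $w_2^{(2)}$ between $w_+^{(2)}$ and $w_-^{(2)}$ cannot be a zero string (otherwise the last letter of $w_+^{(2)}$ and the first letter of $w_-^{(2)}$ would be formal inverses of each other, violating the definition of a string).

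Next I would use the longer pair to link these two letters. The relation $w_-^{(2)} = (w_+^{(2)})\inv$ identifies the length-$k_2$ suffix of $w$ with the reversed formal inverse of the length-$k_2$ prefix, yielding
\begin{equation*}
\alpha_{m - k_2 + j} = \alpha_{k_2 - j + 1}\inv \qquad \text{for each } 1 \leq j \leq k_2.
\end{equation*}
Specializing to $j = k_2 - k_1$ (which lies in the required range because $1 \leq k_2 - k_1 < k_2$) gives the key identity $\alpha_{m - k_1} = \alpha_{k_1 + 1}\inv$.

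Combining the two steps produces the contradiction: since $\alpha_{k_1 + 1} \in Q_1$, we have $\alpha_{k_1 + 1}\inv \in Q_1\inv$, but this is the same letter as $\alpha_{m - k_1}$, which was already forced to lie in $Q_1$. Hence no two distinct pairs can coexist in $\sap^3_w$, as required. I do not anticipate any serious obstacle: the argument is essentially a direct consequence of the factor/image substring conventions clashing with the palindromic-inverse structure that the defining conditions of $\sap^3_w$ impose on $w$, and the only mild book-keeping is confirming the index bounds above.
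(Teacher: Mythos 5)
Your proof is correct and follows essentially the same approach as the paper's: the paper writes $w_+ = z_+ z_0$ and observes that admissibility forces the first symbol of $z_0$ to be simultaneously an arrow and a formal inverse, which is precisely the identity $\alpha_{m-k_1} = \alpha_{k_1+1}^{-1}$ that you derive from the palindromic structure of the longer pair. Your more explicit index bookkeeping (including the check that $k_2 < m$) is sound and matches the paper's reasoning step for step.
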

	\begin{proof}
		Suppose that $\sap^3_w$ is non-empty and let $(w_+,w_-) \in \sap^3_w$. Suppose for a contradiction that there exists a pair $(z_+,z_-) \in \sap^3_w$ distinct from $(w_+,w_-)$. Then either $z_+$ is a substring of $w_+$ or vice versa. If $z_+$ is a substring of $w_+$, then we may write $w_+ = z_+ z_0$ and $w_- = z_0\inv z_-$. Since $(z_+,z_-)$ is an admissible pair, the first symbol of $z_0$ and the last symbol of $z_0\inv$ are arrows. But this implies that the first symbol of $z_0$ is both an arrow and formal inverse, which is impossible. So $(z_+,z_-) \not\in \sap^3_w$. A similar argument follows if $w_+$ is a substring of $z_+$. Hence, $|\sap^3_w| \leq 1$.
	\end{proof}
	
	\begin{lem} \label{lem:sap3}
		Suppose $w=w_+ w_2 w_- \in \Wchi$ and that $(w_+,w_-) \in \sap^3_w$. Then there exists a direct hyperstring $\bw=(w,\mu_1\mu_2(w))$.
	\end{lem}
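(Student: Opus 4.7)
The plan is to show $w \gg \mu_1\mu_2(w)$, which by definition of a direct hyperstring immediately yields that $\bw=(w,\mu_1\mu_2(w))$ is one.

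The starting observation is structural. Since $w_-=w_+\inv$ and $|w_1|=|w_3|=0$, the string $w=w_+ w_2 w_+\inv$ retraces itself, so $s(w)=s(w_+)=t(w_+\inv)=t(w)$. A truncated edge uniquely determines the $\mathbb{D}$-triple containing it (its unique non-truncated vertex has valency $2$ and multiplicity $1$, pinning down the adjacent triangle), so the two triples $d,d'\in\Dchi$ required by Definition~\ref{defn:Wchi} for $w\in\Wchi$ must coincide, say as $(x,y_1,y_2)$. Writing $w_+=\alpha w_+'$ for the first letter $\alpha$, one has
\[
w=\alpha\, w_+'\, w_2\, (w_+')\inv \alpha\inv \qquad \text{and} \qquad \mu_1\mu_2(w)=\alpha^\ast w_+'\, w_2\, (w_+')\inv \gamma^\ast,
\]
where $\alpha^\ast$ and $\gamma^\ast$ are the alternative first and last symbols of the string, incident instead to $y_2$.

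Next I would verify the conditions (R1)--(R3) defining $\gg$. For (R1), $t(w)=y_1$ and $s(\mu_1\mu_2(w))=y_2$ are exactly the two truncated edges of the single triple $(x,y_1,y_2)\in\Dchi$, so the condition holds with indices $i=1,\,j=2$. For (R2), the candidate maximal common substring is $w_0=(w_+')\inv$ (a zero string when $|w_+|=1$); it occupies the segment of $w$ immediately before the last letter $\alpha\inv$, and its inverse $w_+'$ occupies the segment of $\mu_1\mu_2(w)$ immediately after the first letter $\alpha^\ast$. Maximality will be the central check: extending $w_0$ by one further symbol on the left would force the last symbol of $w_2$ (which lies in $Q_1$ by the image-substring condition on $w_-$) to equal the inverse of the first symbol of $w_2$ (which lies in $Q_1$ by the factor-substring condition on $w_+$), impossible since the inverse of an arrow is a formal inverse. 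Finally for (R3), with $w_1=\alpha w_+' w_2$ and $w'_1=w_2(w_+')\inv\gamma^\ast$, the first symbol of $w'_1$ is the first symbol of $w_2$, an arrow, so (R3) holds.

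The main bookkeeping obstacle I anticipate is the uniform handling of the two sub-cases $w_+'=\emptyset$ and $w_+'\neq\emptyset$. Happily the argument is identical in both: the maximality of $w_0$ in (R2) rests only on the arrow-orientation data guaranteed by the admissibility of $(w_+,w_-)$ in $\ap(w,w)$, and (R3) is established by the single feature---the first symbol of $w_2$ being an arrow---that is independent of $|w_+'|$. No deeper input beyond the combinatorics of $\sap^3_w$ and the definitions of $\mu_1,\mu_2$ is needed.
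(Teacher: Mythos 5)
Your proof is correct and follows the same overall route as the paper: reduce the statement to verifying (R1)--(R3) for the relation $w \gg \mu_1\mu_2(w)$, with the common substring in (R2) being $w_0 = (w_+')^{-1}$ placed just before the final symbol of $w$ (and $w'_0 = w_+'$ placed just after the initial symbol $\alpha^\ast$ of $\mu_1\mu_2(w)$). The one place where you genuinely diverge is the maximality check in (R2). The paper asserts $|w_+|>2$ and then says maximality ``follows from Lemma~\ref{lem:sap3card}'' (the statement $|\sap^3_w|\leq 1$), without elaboration. You instead give a direct combinatorial argument: extending $w_0$ by one symbol would force the last symbol of $w_2$ to be the inverse of the first symbol of $w_2$, but both are arrows (the first by the factor-substring condition on $w_+$, the last by the image-substring condition on $w_-$), so the extension is impossible. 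This is cleaner --- the appeal to Lemma~\ref{lem:sap3card} is somewhat opaque, since that lemma is really about distinct admissible pairs rather than about the length of the common substring, whereas your argument goes straight to the point and works even when $w_+'$ is the empty string, so you don't need the paper's unproved claim that $|w_+|>2$. You also spell out the verification of (R1) via the observation that $s(w)=t(w)$ forces the two $\mathbb{D}$-triples to coincide, which the paper simply declares ``automatically satisfied.'' All in all a correct proof, with a tidier maximality argument than the paper's.
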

	\begin{proof}
		Since we do not have $w \ggg \mu_1\mu_2(w)$, we are required to show that $w \gg \mu_1\mu_2(w)$. Property (R1) is automatically satisfied by the fact that $w \in \Wchi$ and the definition of $\mu_1$. Now note that $|w_+|>2$, which follows from property (W4) of the definition of $\Wchi$ and the combinatorics of elements in $\Dchi$. We may thus rewrite $w$ and $\mu_1\mu_2(w)$ as 
		\begin{equation*}
			w = w_+ w_2 w_0 \alpha \qquad \text{and} \qquad
			\mu_1\mu_2(w) = \alpha' w'_0 w_2 w'_-,
		\end{equation*}
		where $w_0$ is the substring of $w_-$ obtained by removing the last symbol, $w'_0$ is the substring of $w_+$ obtained by removing the first symbol, and $w'_-$ is obtained from $w_-$ by changing the last symbol in accordance with $\mu_2$. It then follows that $w_0=w_0\inv$ is the common substring (up to inverse) of $w$ and $\mu_1\mu_2(w)$ in property (R2), which is maximal due to Lemma~\ref{lem:sap3card}. Since $(w_+,w_-) \in \ap(w,w)$, we know that the first symbol of $w_2$ is an arrow (also, the last symbol). Since this is trivially the first symbol of $w_2 w'_-$ (also, the last symbol of $w_+ w_2$), (R3) is satisfied. Thus $w \gg \mu_1\mu_2(w)$ and hence $\bw=(w,\mu_1\mu_2(w))$ is a direct hyperstring.
	\end{proof}
	
	\subsubsection{The exact sequences} \label{sec:ExactSequences}
	For any $(w_+,w_-) \in \sap^1_w$, it follows from Lemma~\ref{lem:sap1} that there exists a corresponding non-split exact sequence
	\begin{equation} \tag{E1} \label{eq:E1}
		\xymatrix@1{
			0
				\ar[r]
			& M(\mu_1\mu_2(w))
				\ar[r]
			& M(\mu_1(w'_1)) \oplus M(\mu_2(w'_2))
				\ar[r]^-{\left(\begin{smallmatrix} f_1 & f_2 \end{smallmatrix}\right)}
			& M(w)
				\ar[r]
			& 0
		},
	\end{equation}
	where
	\begin{align*}
		w'_1 &= w_1 w_0 w_3, \\
		w'_2 &= w_1 w_0 w_2 w_0 w_2 w_0 w_3,
	\end{align*}
	and $f_1$ and $f_2$ correspond to the admissible pairs
	\begin{align*}
		(w_0 w_3, w_0 w_3) &\in \ap(\mu_1(w'_1),w), \\
		(w_1 w_0 w_2 w_0, w_1 w_0 w_2 w_0) &\in \ap(\mu_2(w'_2),w),
	\end{align*}
	respectively. We will denote the set of all exact sequences corresponding to elements of $\sap^1_w$ by $\es^1_w$.
	
	For any $(w_+,w_-) \in \sap^2_w$, it follows from Lemma~\ref{lem:sap2} and Remark~\ref{rem:sap2} that there exists a corresponding non-split exact sequence
	\begin{equation} \tag{E2} \label{eq:E2}
		\xymatrix@1{
			0
				\ar[r]
			& M(\mu_1\mu_2(w))
				\ar[r]
			& M(\mu_1(w'_1)) \oplus M(\mu_2(w'_2))
				\ar[r]^-{\left(\begin{smallmatrix} f_1 & f_2 \end{smallmatrix}\right)}
			& M(w)
				\ar[r]
			& 0
		},
	\end{equation}
	where
	\begin{align*}
		w'_1 &= w_3\inv w_+ w_2\inv w_-w_3 \\
		w'_2 &= w_1 w_+ w_2 w_- w_1\inv,
	\end{align*}
	and $f_1$ and $f_2$ correspond to the admissible pairs
	\begin{align*}
		(w_-w_3,w_-w_3) &\in \ap(\mu_1(w'_1),w), \\
		(w_1 w_+ w_2 w_-,w_1 w_+ w_2 w_-) &\in \ap(\mu_2(w'_2),w),
	\end{align*}
	respectively. We will denote the set of all exact sequences corresponding to elements of $\sap^2_w$ by $\es^2_w$.
	
	For any $(w_+,w_-) \in \sap^0_w$, it follows from Lemma~\ref{lem:sap0} and Remark~\ref{rem:sap2} that there exists a corresponding non-split exact sequence that is of the form of either (\ref{eq:E1}) or (\ref{eq:E2}). We will denote the set of all exact sequences corresponding to elements of $\sap^0_w$ by $\es^0_w$. 
	
	If there exists $(w_+,w_-) \in \sap^3_w$, we know from Theorem~\ref{thm:indecHyperstring}, Lemma~\ref{lem:sap3card} and Lemma~\ref{lem:sap3} that there exists a corresponding non-split exact sequence
	\begin{equation*}
		\xymatrix@1{
		0 
			\ar[r] 
		& M(\mu_1\mu_2(w)) 
			\ar[rrr]^-{\left(\begin{smallmatrix} 0 \\ 1_{M(\mu_1\mu_2(w))} \end{smallmatrix}\right)}
		&&& M(\bw_3) 
			\ar[rrr]^-{\left(\begin{smallmatrix} 1_{M(w)} & 0 \end{smallmatrix}\right)}
		&&& M(w) 
			\ar[r] 
		& 0},
	\end{equation*}
	where $\bw_3=(w,\mu_1\mu_2(w))$ and we use the fact that $M(\bw_3)$ has the vector space structure $M(w)  \oplus M(\mu_1\mu_2(w))$. We denote the (empty or singleton) set of the exact sequence corresponding to $(w_+,w_-) \in \sap^3_w$ by $\es^3_w$.
	
	Finally for the identity map, which corresponds to $(w,w) \in \sap^4_w$, we have a non-split exact sequence
	\begin{equation*}
		\xymatrix@1{
		0 
			\ar[r] 
		& M(\mu_1\mu_2(w)) 
			\ar[rrr]^-{\left(\begin{smallmatrix} 0 \\ 1_{M(\mu_1\mu_2(w))} \end{smallmatrix}\right)}
		&&& M(\bw_4) 
			\ar[rrr]^-{\left(\begin{smallmatrix} 1_{M(w)} & 0 \end{smallmatrix}\right)}
		&&& M(w) 
			\ar[r] 
		& 0},
	\end{equation*}
	where $\bw_4=(w,\mu_1\mu_2(w\inv))$ and we use the fact that $M(\bw_4)$ has the vector space structure $M(w)  \oplus M(\mu_1\mu_2(w\inv)) \cong M(w) \oplus M(\mu_1\mu_2(w))$. We denote the (singleton) set of the exact sequence corresponding to $(w,w) \in \sap^4_w$ by $\es^4_w$.
	
	\begin{rem}
		Recall that whilst $M(\bw_3)$ and $M(\bw_4)$ have a vector space structure isomorphic to $M(w) \oplus M(\mu_1\mu_2(w))$, this is not an isomorphism of $A$-modules. They are in fact indecomposable $A$-modules, as per Theorem~\ref{thm:indecHyperstring}.
	\end{rem}
	\begin{rem}
		Despite the similarities of $M(\bw_3)$ and $M(\bw_4)$, they are non-isomorphic. This is because they arise from distinct symmetric clannish bands. See Section~\ref{sec:ClanExample} for details, where $\bw_3$ in this section corresponds to the direct hyperstring $\bw_2$ in Section~\ref{sec:ClanExample} and $\bw_4$ in this section corresponds to the direct hyperstring $\bw_1$ in Section~\ref{sec:ClanExample}.
	\end{rem}
	
	\subsection{Tubes arising from $\Wchi$} \label{sec:WchiTubes}
	We will now show that as well as being $\tau$-periodic, the modules $M(w)$ arising from strings $w \in \Wchi$ are mouth modules of tubes of rank 2. To prove this, we will construct a quiver $Q^w$ and a (possibly trivial) admissible ideal $I^w$ such that the algebra $A_w = K Q^w / I^w$ is clannish. For a recollection on clannish algebras, see Appendix~\ref{sec:AppClans}. In the simplest cases, where the start and end of $w$ belong to distinct $\mathbb{D}$-triples in $\chi$, $A_w$ is simply a hereditary algebra of type $\Dtilde_{|w|+2}$. We will then construct a functor $T_w\colon \Mod* A_w \rightarrow \Mod*A$. The idea behind this construction is that $T_w$ will map the Auslander-Reiten sequences in the rank 2 tubes of $\Mod* A_w$ to Auslander-Reiten sequences in some rank 2 tubes of $\Mod* A$.
	
	So let $w=\alpha\beta_1\ldots\beta_n\gamma \in \mathcal{W}(d,d')$, for some $\mathbb{D}$-triples $d=(x,y_1,y_2)$ and $d'=(x',y'_1,y'_2)$. If $\es^3_w = \emptyset$, then proceed with the following construction. From $w$, first define the quiver $Q^w$ to be the following quiver of type $\Dtilde_{|w|+2}$
	\begin{equation*}
		\xymatrix{
		-1
			\ar@{-}[dr]^-{a}
		&&&& n+1
			\ar@{-}[dl]_-{c}
		\\
		& 0
			\ar@{-}[r]^-{b_1}
		& \cdots
			\ar@{-}[r]^-{b_n}
		& n	
		\\
		-2
			\ar@{-}[ur]^-{a^\ast}
		&&&& n+2
			\ar@{-}[ul]_-{c^\ast}
		}
	\end{equation*}
	where the orientation of both $a$ and $a^\ast$ point to the right if $\alpha \in Q_1$ and point to the left if $\alpha \in Q_1\inv$. Similarly, each $b_i$ points to the right (resp. left) if $\beta_i \in Q_1$ (resp. $\beta_i \in Q_1\inv$). Finally, both $c$ and $c^\ast$ point to the right (resp. left) if $\gamma \in Q_1$ (resp. $\gamma \in Q_1\inv$). We will define $I^w$ to be the zero ideal in this case.
	
	We note that in this construction, the algebra $A_w$ is clannish (in fact, skewed-gentle in the sense of \cite{Pena} and \cite{BekkertSkewGentle}). See Appendix~\ref{sec:AppClans} for a recollection. Specifically, in the first construction, $A_w$ is isomorphic to the clannish algebra $K\widehat{Q}^w / \langle \widehat{h}_1^2 - \widehat{h}_1, \widehat{h}_2^2 - \widehat{h}_2 \rangle$, where $\widehat{Q}^w$ is the quiver
	\begin{center}
		\begin{tikzpicture}
			\draw (0,0) node {$
				\xymatrix@1{
				-1
					\ar@{-}[r]^-{\widehat{a}}
				& 0
					\ar@{-}[r]^-{\widehat{b}_1}
				& \cdots
					\ar@{-}[r]^-{\widehat{b}_n}
				& n	
					\ar@{-}[r]^-{\widehat{c}}
				& n+1
			}$
			};
			\draw[<-] (3.1,0.05) arc (140.0037:-140:0.3);
			\draw (3.9,-0.15) node {\tiny$\widehat{h}_2$};
			\draw[->] (-3.1,0.05) arc (39.9963:320:0.3);
			\draw (-3.9,-0.15) node {\tiny$\widehat{h}_1$};
		\end{tikzpicture}
	\end{center}
	with the orientation of each non-loop arrow $\widehat{r} \in \widehat{Q}^w_1$ the same orientation as its corresponding arrow $r \in Q^w_1$.
	
	On the other hand, if we instead have $\es^3_w \neq \emptyset$, then note that there exists some $j<n$ such that 
	\begin{equation*}
		\alpha\beta_1\ldots \beta_j = \gamma\inv \beta_n\inv \ldots \beta_{n-j+1}\inv.
	\end{equation*}
	In this case, proceed with the following alternative construction. Let $Q^w$ be the quiver with underlying graph
	\begin{equation*}
		\xymatrix{
		-1
			\ar@{-}[dr]^-{a}
		&&&& n-j-1
			\ar@<1ex>@{-}[dr]^-{b_{n-j-1}}
		\\
		& 0
			\ar@{-}[r]^-{b_1}
		& \cdots
			\ar@{-}[r]^-{b_{j}}
		& j
			\ar@<-1ex>@{-}[dr]_-{b_{j+1}}
			\ar@<1ex>@{-}[ur]^-{b_{n-j}}
		&& \vdots
		\\
		-2
			\ar@{-}[ur]^-{a^\ast}
		&&&& j+1
			\ar@<-1ex>@{-}[ur]_-{b_{j+2}}
		}
	\end{equation*}
	The orientation of $Q^w$ is determined as follows. The orientation of both $a$ and $a^\ast$ point to the right if $\alpha \in Q_1$ and point to the left if $\alpha \in Q_1\inv$. Similarly for each $i \leq n-j$, $b_i$ points to towards $i$ (resp. towards $i-1$) if $\beta_i \in Q_1$ (resp. $\beta_i \in Q_1\inv$), where $n-j$ is identified with $j$. We know from the previous subsection that $\beta_{j+1}$ and $\beta_{n-j}$ are arrows, so define $I^w=\langle b_{n-j} b_{j+1} \rangle$.
	
	 The algebra $A_w$ in this alternative construction is isomorphic to the clannish (or skewed-gentle) algebra $K\widehat{Q}^w / \langle \widehat{h}^2 - \widehat{h}, \widehat{b}_{n-j} \widehat{b}_{j+1} \rangle$, where $\widehat{Q}^w$ is the quiver
	\begin{center}
		\begin{tikzpicture}
			\draw (0,0) node {$
				\xymatrix{
				&&&& n-j-1
					\ar@<1ex>@{-}[dr]^-{\widehat{b}_{n-j-1}}
				\\
				-1
					\ar@{-}[r]^-{\widehat{a}}
				& 0
					\ar@{-}[r]^-{\widehat{b}_1}
				& \cdots
					\ar@{-}[r]^-{\widehat{b}_{j}}
				& j
					\ar@<-1ex>@{->}[dr]_-{\widehat{b}_{j+1}}
					\ar@<1ex>@{<-}[ur]^-{\widehat{b}_{n-j}}
				&& \vdots
				\\
				&&&& j+1
					\ar@<-1ex>@{-}[ur]_-{\widehat{b}_{j+2}}
				}$
			};
			\draw[->] (-4.4,0.2) arc (39.9963:320:0.3);
			\draw (-5.2,0) node {\tiny$\widehat{h}$};
		\end{tikzpicture}
	\end{center}
	with the orientation of each non-loop arrow $\widehat{r} \in \widehat{Q}^w_1$ the same orientation as its corresponding arrow $r \in Q^w_1$. This is precisely the example given in Section~\ref{sec:ClanExample}.
	
	For technical purposes, we will establish some additional notation regarding $Q^w$ and its relationship with $Q$. Denote by $e_i$ the stationary path at vertex $i$ in $Q^w$ and by $\varepsilon_z$ the stationary path at vertex $z$ in $Q$ (where $z$ may also be considered as a polygon in $\chi$). In alignment with the notation of Section~\ref{sec:Involutions}, if $\alpha \in Q_1$ (resp. $\alpha \in Q^{-1}_1$), then denote by $\alpha^\ast$ the unique arrow in $Q_1$ (resp. formal inverse in $Q_1\inv$) of source $y_2$. Similarly, if $\gamma \in Q_1$ (resp. $\gamma \in Q^{-1}_1$), then denote by $\gamma^\ast$ the unique arrow in $Q_1$ (resp. formal inverse in $Q_1\inv$) of target $y'_2$. Define a map $\rho\colon Q_1 \cup Q_1\inv \rightarrow Q_1$ that maps each symbol to its corresponding arrow. Finally, define a morphism of quivers $\xi\colon Q^w \rightarrow Q$ by
	\begin{equation*}
		\xi(\xymatrix@1{j \ar[r]^-{r} & k}) =
		\begin{cases}
			\xymatrix@1{s(\rho(\alpha)) \ar[r]^-{\rho(\alpha)} & t(\rho(\alpha))}						&	\text{if } r=a, \\
			\xymatrix@1{s(\rho(\alpha^\ast)) \ar[r]^-{\rho(\alpha^\ast)} & t(\rho(\alpha^\ast))}						&	\text{if } r=a^\ast, \\
			\xymatrix@1{s(\rho(\beta_i)) \ar[r]^-{\rho(\beta_i)} & t(\rho(\beta_i))}	&	\text{if } r=b_i, \\
			\xymatrix@1{s(\rho(\gamma)) \ar[r]^-{\rho(\gamma)} & t(\rho(\gamma))}					&	\text{if } r=c, \\
			\xymatrix@1{s(\rho(\gamma^\ast)) \ar[r]^-{\rho(\gamma^\ast)} & t(\rho(\gamma^\ast))}					&	\text{if } r=c^\ast.
		\end{cases}
	\end{equation*}
	
	\begin{defn}\label{def:Tw}
		Define a functor $T_w\colon\Mod*A_w \rightarrow \Mod*A$ as follows. For a given module $V \in \Mod*A_w$, the underlying vector space of $M=T_wV$ is the same as $V$. That is, as a vector space, we define $M=V$. We then enrich $M$ with the structure of an $A$-module by defining the actions point-wise for each $m \in M$ as
		\begin{align*}
			m \varepsilon_{z} &= \sum_{i \in Q^w_0 : \xi(i)=z} m e_i	& m \delta &= \sum_{r \in Q^w_1 : \xi(r)=\delta} m r \\
		\end{align*}
		for each vertex $z \in Q_0$ and each arrow $\delta \in Q_1$. For any given morphism $q\colon V \rightarrow V'$ of $A_w$-modules, we simply define $f=T_w q$ as $f(m) = q(m)$ for all $m \in M$.
	\end{defn}
	    
	It is easy to see that $T_w$ is an exact functor. Also note that for any $V \in \Mod* A_w$ and any non-trivial idempotent $f \in \End_A(T_w V)$, there exists a non-trivial idempotent $q=f \in \End_{A_w}(V)$. Thus, $T_w$ preserves indecomposable objects.
	
	\begin{rem} \label{rem:AwString}
		Given $w \in \Wchi$. There exists a string $\widehat{w} \in \str_{A_w}$ such that $T_w M(\widehat{w})=M(w)$. This is precisely the string of maximal length such that $s(\widehat{w})=-1$ and $t(\widehat{w})=n+1$ (if $\es^3_w= \emptyset$) or $t(\widehat{w})=-1$ (if $\es^3_w\neq \emptyset$).
	\end{rem}
	
	\begin{lem} \label{lem:es4Image}
		Let $\widehat{w} \in \str_{A_w}$ be the string in the above remark. Then the almost split sequence
		\begin{equation*}
			\xymatrix@1{
				0
					\ar[r]
				& \tau M(\widehat w)
					\ar[r]^-q
				& V
					\ar[r]^-p
				& M(\widehat w)
					\ar[r]
				& 0
			}
		\end{equation*}
		ending in $M(\widehat{w})$ has an indecomposable middle term. Moreover, we have
		\begin{equation*}
			\xymatrix@1{
				0
					\ar[r]
				& T_w \tau M(\widehat w)
					\ar[r]^-{T_w q}
				& T_w V
					\ar[r]^-{T_w p}
				& T_w M(\widehat w)
					\ar[r]
				& 0
			} \in \es_w^4.
		\end{equation*}
	\end{lem}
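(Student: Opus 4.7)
My plan is to work entirely within the algebra $A_w$ first, where the representation theory is concrete and well-understood, and then transfer the resulting Auslander--Reiten sequence to $\Mod* A$ via the exact, indecomposable-preserving functor $T_w$. By Remark~\ref{rem:AwString}, $M(\widetilde w)$ is the string module over $A_w$ corresponding to the ``longest'' string running across the full trunk of $Q^w$. When $\es^3_w=\emptyset$, $A_w$ is a hereditary algebra of type $\Dtilde_{|w|+2}$; when $\es^3_w\neq\emptyset$, $A_w$ is the skewed-gentle quotient with the single relation $b_{n-j-2}b_{j+1}$. In both cases $A_w$ is tame, with a well-understood AR quiver containing tubes of rank $2$ at the two branching ends (or the analogous ``loop'' end in the second case).

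The first step is to verify that $M(\widetilde w)$ sits at the mouth of a rank-$2$ tube of $_s\Gamma_{A_w}$, so that the AR sequence ending in $M(\widetilde w)$ has a single indecomposable middle term $V$. In the hereditary case this follows from the standard classification of the regular components of tame hereditary algebras of type $\Dtilde$, identifying $M(\widetilde w)$ by its dimension vector as a mouth module of one of the rank-$2$ tubes; in the relational case, the corresponding statement follows from the skewed-gentle classification (c.f. Appendix~\ref{sec:AppClans}), using that the relation still yields the same local mouth structure. From the structure of the tube, $\tau M(\widetilde w)$ is the other mouth module, which is $M(\mu_1\mu_2(\widetilde w))$ (where $\mu_1,\mu_2$ are interpreted on strings over $A_w$ by swapping the two end branches). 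The middle term $V$ can then be described as the unique indecomposable lying directly above these two mouth modules in the tube.

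Having established the AR sequence in $\Mod* A_w$, the second step is to apply $T_w$. Exactness of $T_w$ immediately gives an exact sequence in $\Mod*A$, and the identifications $T_w M(\widetilde w) = M(w)$ (Remark~\ref{rem:AwString}) and $T_w M(\mu_1\mu_2(\widetilde w)) = M(\mu_1\mu_2(w))$ follow by the same construction applied to the companion strings. The main remaining task — and the main obstacle — is to identify $T_wV$ with the hyperstring module $M(\bw_4)$ where $\bw_4=(w,\mu_1\mu_2(w\inv))$. The approach is direct: as a $K$-vector space, $V$ decomposes as $M(\widetilde w)\oplus M(\mu_1\mu_2(\widetilde w))$ with extra non-diagonal actions along the arrows incident to the branching vertices $-1,-2,n+1,n+2$; pushing forward through the quiver morphism $\xi$, these branching arrows are precisely the arrows of $Q$ incident to the truncated edges $y_1,y_2,y'_1,y'_2$ of the two $\mathbb{D}$-triples, and their off-diagonal actions translate exactly into the lower-triangular gluing actions $\phi_i$ that define $M(\bw_4)$ in Section~\ref{sec:Hyperstrings}. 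A careful case analysis on whether $\alpha,\gamma\in Q_1$ or $Q_1\inv$ confirms that the $\ggg$-relation $w\ggg \mu_1\mu_2(w\inv)$ matches the off-diagonal actions on $V$, giving $T_wV\cong M(\bw_4)$. Finally, the vertical inclusion/projection maps $T_wq$ and $T_wp$ are the canonical inclusion and projection of the direct-summand vector space decomposition, which is exactly the description of the distinguished exact sequence defining $\es_w^4$.
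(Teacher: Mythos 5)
Your proposal follows essentially the same two-stage strategy as the paper: establish the Auslander--Reiten sequence ending in $M(\widetilde w)$ inside $\Mod*A_w$ (a hereditary $\Dtilde$ algebra if $\es^3_w=\emptyset$, the one-relation skewed-gentle algebra of Section~\ref{sec:ClanExample} otherwise), and then transport it to $\Mod*A$ via the exact, indecomposable-preserving functor $T_w$. The paper's own proof is terser --- it simply cites \cite[XIII.2.6.~Table]{BlueBookII} for the identification of the mouth modules of the rank-2 tube, \cite[X.2]{BlueBookII} for $T_wV\cong M(\bw_4)$ in the hereditary case, and the worked example of Section~\ref{sec:ClanExample} (where $V$ is the module called $M(\bw_1)$ there) for the skewed-gentle case --- whereas you sketch the contents of those citations: locating $M(\widetilde w)$ at the mouth by dimension vector, describing $V$ as a triangular extension with off-diagonal actions at the branching arrows, and pushing those actions through $\xi$ to match the gluing map $\phi$ in the definition of $M(\bw_4)$. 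This is the same route, just at a slightly higher resolution, and your observation that the inclusion/projection maps of the underlying vector-space decomposition of $V$ transport directly to the distinguished maps of the sequence in $\es_w^4$ is exactly what makes the final identification work. One small caution: the claim that one can identify the mouth module ``by its dimension vector'' is a shortcut that in principle requires the orientation of $Q^w$ to be tracked against the cited table, and similarly the ``careful case analysis'' confirming $T_wV\cong M(\bw_4)$ needs to keep straight at which end of $\bw_4=(w,\mu_1\mu_2(w^{-1}))$ the off-diagonal action lives (it is a single $\phi_1$, not one at each branching end); but these are details you flag rather than omit, so the argument is sound.
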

	\begin{proof}
		If $\es^3_w=\emptyset$, then this result follows from known results of representations of hereditary algebras of type $\Dtilde$. See \cite[XIII.2.6. Table]{BlueBookII} for an account of the mouth modules in the tube containing $M(\widehat w)$ and \cite[X.2]{BlueBookII} for the fact that $T_w V \cong M(\bw_4)$.
		
		If $\es^3_w\neq\emptyset$, then we are required to use the theory of clannish algebras. The result follows directly from Section~\ref{sec:AppClans}, with $KQ^w / I^w$ specifically being the algebra in the example of \ref{sec:ClanExample}, albeit with a slightly different labelling/notation --- the module $V$ is denoted by $M(\bw_1)$ in Section~\ref{sec:ClanExample}.
	\end{proof}
	
	\begin{lem} \label{lem:es3Image}
		Let $\widehat{w} \in \str_{A_w}$ be as in Remark~\ref{rem:AwString} and let $\bw_3$ be the hyperstring from the exact sequence in $\es^3_w$, which we assume to be non-empty. Then there exists an exact sequence
		\begin{equation*}
			\xymatrix@1{
				0
					\ar[r]
				& \tau M(\widehat w)
					\ar[r]^-q
				& V
					\ar[r]^-p
				& M(\widehat w)
					\ar[r]
				& 0
			}
		\end{equation*}
		in $\Mod*A_w$ with an indecomposable middle term that is not almost split. Moreover,
		\begin{equation*}
			\xymatrix@1{
				0
					\ar[r]
				& T_w \tau M(\widehat w)
					\ar[r]^-{T_w q}
				& T_w V
					\ar[r]^-{T_w p}
				& T_w M(\widehat w)
					\ar[r]
				& 0
			} \in \es_w^3.
		\end{equation*}
	\end{lem}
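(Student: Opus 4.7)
The plan is to mirror the strategy of Lemma~\ref{lem:es4Image}, exploiting the fact that when $\es^3_w \neq \emptyset$ the alternative construction of $(Q^w, I^w)$ yields precisely the clannish algebra analysed in Section~\ref{sec:ClanExample}. In that setting, the loop at the special vertex together with the special relation $\widehat h^2 - \widehat h$ produces two distinct symmetric clannish bands (indexed by the two fixed points of the involution associated to $\widehat h$). By the clannish classification recalled in Appendix~\ref{sec:AppClans}, these two symmetric bands give rise to two non-isomorphic indecomposable modules that sit directly above $M(\widetilde w)$ in the rank-$2$ tube of $\Mod* A_w$ containing $M(\widetilde w)$: one is the middle term of Lemma~\ref{lem:es4Image}, and I would denote the other by $V$.

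First I would pin down $V$ explicitly. By the description in Section~\ref{sec:ClanExample}, $V$ is the symmetric band module arising from the choice of parity at the loop opposite to the one in Lemma~\ref{lem:es4Image}. This module is indecomposable and satisfies $\tp V \cong \tp M(\widetilde w)$ and $\soc V \cong \soc \tau M(\widetilde w)$, so the clannish combinatorics furnish a short exact sequence
\begin{equation*}
0 \rightarrow \tau M(\widetilde w) \xrightarrow{q} V \xrightarrow{p} M(\widetilde w) \rightarrow 0
\end{equation*}
in $\Mod* A_w$. Since the Auslander--Reiten sequence ending in $M(\widetilde w)$ is unique up to isomorphism of short exact sequences, and since its middle term is the other of the two symmetric band modules by Lemma~\ref{lem:es4Image}, this new sequence cannot be almost split.

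Next I would apply the exact functor $T_w$. Exactness produces a short exact sequence of $A$-modules with outer terms $T_w \tau M(\widetilde w) = M(\mu_1\mu_2(w))$ and $T_w M(\widetilde w) = M(w)$, matching the outer terms prescribed by $\es_w^3$. Since $T_w$ preserves indecomposables, $T_w V$ is indecomposable; the explicit form of the action defining $T_w$, combined with the description of $M(\bw_3)$ as the hyperstring module obtained by gluing along the relation $\gg$ at the special vertex, identifies $T_w V$ with $M(\bw_3)$. Comparing the induced maps $T_w q$ and $T_w p$ against the canonical inclusion and projection used in the construction of the sequence in $\es_w^3$ (through the vector space decomposition $M(w) \oplus M(\mu_1\mu_2(w))$) then confirms the final claim.

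The main technical obstacle is the identification $T_w V \cong M(\bw_3)$, as opposed to $M(\bw_4)$. This amounts to tracking carefully how the two distinct symmetric clannish bands in $A_w$ translate, under $T_w$, into the two non-isomorphic hyperstring modules; concretely, one must verify that the two fixed points of the involution associated to $\widehat h$ correspond under the clannish classification to the direct hyperstrings $(w,\mu_1\mu_2(w))$ obtained via $\gg$ (yielding $M(\bw_3)$) and $(w,\mu_1\mu_2(w\inv))$ obtained via $\ggg$ (yielding $M(\bw_4)$), with the parity choice dictating whether the glued action at the special vertex is diagonal or carries the lower-triangular twist in the hyperstring module construction.
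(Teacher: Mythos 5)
Your identification of $V$ is incorrect, and the error propagates through the whole argument. You describe $V$ as "the symmetric band module arising from the choice of parity at the loop opposite to the one in Lemma~\ref{lem:es4Image}," and you claim that the two candidate middle terms "sit directly above $M(\widetilde w)$ in the rank-$2$ tube of $\Mod* A_w$ containing $M(\widetilde w)$." Neither is accurate. A single symmetric clannish band gives rise to two rank-$2$ tubes (Section~\ref{sec:ClanBandTubes}), and a parity flip at the special loop moves a module from one of those tubes to the other; it does not produce a second module lying above $M(\widetilde w)$ in the same tube. Above $M(\widetilde w)$ in its own tube there is exactly one module, namely the middle term of Lemma~\ref{lem:es4Image}, which is $F M(\ub_1,K^2,\cdot,\cdot)$. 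The module $V$ required here arises from an entirely different symmetric clannish band: in the notation of Section~\ref{sec:ClanExample}, $V = F M(\ub_2, K, \id_K, 0)$, where $\ub_2 = (\widehat{\uw}\widehat{\ueta})^2(\widehat{\uw}\inv\widehat{\ueta})^2$ has twice the length of $\ub_1 = \widehat{\uw}\widehat{\ueta}\widehat{\uw}\inv\widehat{\ueta}$, and this $V$ is a mouth module of a tube that does \emph{not} contain $M(\widetilde w)$. The remark following the construction of $\es^4_w$ makes this explicit: $M(\bw_3)$ and $M(\bw_4)$ "arise from distinct symmetric clannish bands," not from a parity choice at the loop, and your phrase about the "two fixed points of the involution associated to $\widehat h$" indexing the two bands has no meaning in the clannish classification --- there are infinitely many symmetric bands $\ub_i$.

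The rest of your argument inherits this gap. Your justification for the short exact sequence ("$\tp V \cong \tp M(\widetilde w)$ and $\soc V \cong \soc \tau M(\widetilde w)$, so the clannish combinatorics furnish a short exact sequence") is applied to the wrong module and is in any case too weak: matching top and socle does not by itself furnish a non-split extension. Similarly, your final paragraph's claim that the parity choice dictates "whether the glued action at the special vertex is diagonal or carries the lower-triangular twist" is not what distinguishes $M(\bw_3)$ from $M(\bw_4)$; both are hyperstring modules with a lower-triangular twist, differing instead in the gluing direction ($\gg$ versus $\ggg$), i.e.\ in whether the second entry is $\mu_1\mu_2(w)$ or $\mu_1\mu_2(w\inv)$. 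The paper's proof is shorter and rests on a different observation: Section~\ref{sec:ClanExample} shows directly that $F M(\ub_2,K,\id_K,0)$ is the $A_w$-hyperstring module on $(w,w')$, from which $T_w V = M(\bw_3)$ follows; the maps $p$ and $q$ are the projection and inclusion coming from the underlying vector-space decomposition of the hyperstring module; and non-almost-splitness is immediate since the almost split middle term is $M(\bw_4) \ne M(\bw_3)$ by Lemma~\ref{lem:es4Image}.
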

	\begin{proof}
		By the example in Section~\ref{sec:ClanExample}, we know that there exists a module $V \in \Mod*A_w$ such that $T_w V = M(\bw_3)$, where $\bw_3$ is the hyperstring defined in Section~\ref{sec:WchiExt}. This is the module $M(\bw_2)$ defined in Section~\ref{sec:ClanExample}. The existence of the maps $p$ and $q$ is then obvious from the definitions.
	\end{proof}
	
	\begin{lem} \label{lem:es12NotAS}
		None of the exact sequences in $\es_w^0 \cup \es^1_w \cup \es^2_w$ are almost split.
	\end{lem}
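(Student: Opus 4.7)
The plan is to exploit the uniqueness of almost split sequences together with the fact, already established in Lemma~\ref{lem:es4Image}, that the almost split sequence ending in $M(w)$ has an \emph{indecomposable} middle term isomorphic to $M(\bw_4)$. By contrast, every sequence in $\es^1_w \cup \es^2_w$ has a middle term of the form $M(\mu_1(w'_1)) \oplus M(\mu_2(w'_2))$, which I will show is a decomposable module (a direct sum of two non-zero indecomposable string modules). Since a module cannot be simultaneously indecomposable and a direct sum of two non-zero summands, the Krull-Schmidt theorem then forces such a sequence to be non-isomorphic to the almost split one.

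First I would invoke Lemma~\ref{lem:es4Image} to fix the almost split sequence ending in $M(w)$ and record that its middle term $T_wV \cong M(\bw_4)$ is indecomposable by Theorem~\ref{thm:indecHyperstring}. Next, take an arbitrary sequence in $\es^1_w$ corresponding to some $(w_+,w_-) \in \sap^1_w$. By Lemma~\ref{lem:sap1} we have $w'_1, w'_2 \in \Wchi$, and in particular the strings $\mu_1(w'_1)$ and $\mu_2(w'_2)$ are non-zero. The middle term $M(\mu_1(w'_1)) \oplus M(\mu_2(w'_2))$ is therefore a direct sum of two non-zero indecomposable string modules, hence decomposable. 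An entirely analogous argument using Lemma~\ref{lem:sap2} (together with Remark~\ref{rem:sap2} to ensure non-degeneracy of the involved substrings) disposes of the $\es^2_w$ case.

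Having shown that the middle term of any sequence in $\es^1_w \cup \es^2_w$ decomposes into at least two non-zero summands, while the middle term of the almost split sequence ending in $M(w)$ is indecomposable, I conclude by appealing to the uniqueness of almost split sequences up to isomorphism: if any sequence in $\es^1_w \cup \es^2_w$ were almost split, its middle term would be isomorphic to $M(\bw_4)$ and hence indecomposable, a contradiction.

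The only potential obstacle is essentially bookkeeping rather than mathematics: one must be sure that the explicit construction of the sequences in Section~\ref{sec:WchiExt} genuinely produces the claimed decomposable middle terms (rather than, say, an extension collapsing to a single indecomposable), but this is immediate from the definitions of $w'_1$ and $w'_2$ given there and from the fact that string modules over special multiserial algebras are indecomposable. The argument does not require computing $\Ext$ spaces or exhibiting the almost split sequences explicitly in $\es^1_w \cup \es^2_w$; it is purely a Krull-Schmidt comparison of middle terms.
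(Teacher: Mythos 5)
Your proposal has a genuine circularity. You invoke Lemma~\ref{lem:es4Image} as though it identified the almost split sequence in $\Mod*A$ ending in $M(w)$ as the one with indecomposable middle term $T_wV \cong M(\bw_4)$. But that lemma only asserts two things: (i) the almost split sequence in the auxiliary category $\Mod*A_w$ ending in $M(\widetilde w)$ has an indecomposable middle term $V$, and (ii) applying $T_w$ to this sequence produces the exact sequence belonging to $\es^4_w$. It does \emph{not} assert that this image is almost split in $\Mod*A$. The functor $T_w$ is exact and preserves indecomposables, but nothing in the paper establishes that it is full, so it is not automatic that it carries almost split sequences to almost split sequences. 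In fact, the first place the paper proves that the sequence in $\es^4_w$ is almost split in $\Mod*A$ is the proposition \emph{following} Lemma~\ref{lem:es12NotAS}, and that proposition's proof explicitly \emph{uses} Lemma~\ref{lem:es12NotAS}. Assuming it in the proof of the lemma is therefore circular.

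Your Krull--Schmidt comparison of middle terms — decomposable $M(\mu_1(w'_1)) \oplus M(\mu_2(w'_2))$ versus indecomposable $M(\bw_4)$ — is correct and is indeed the final punchline in the paper's argument as well, but you cannot reach it without first knowing which sequence is almost split. The paper gets around this by contradiction: one assumes a sequence in $\es^1_w \cup \es^2_w$ is almost split, factors the inclusions $\theta_1, \theta_2$ of $\im f_1, \im f_2$ into $M(w)$ through $T_wp$ (using that the preimages are substring inclusions into $M(\widetilde w)$ that factor through the right almost split map $p$ in $\Mod*A_w$), and deduces that $T_wp$ is then also right almost split in $\Mod*A$. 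This yields two non-isomorphic almost split sequences ending in $M(w)$ — one with decomposable middle term, one with the indecomposable middle term $T_wV$ — contradicting uniqueness. This factoring construction is the actual content of the lemma; to repair your proposal you would need to reproduce it rather than appeal to the (not yet available) identity of the almost split sequence.
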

	\begin{proof}
		Consider an exact sequence
		\begin{equation} \tag{$\ast$} \label{eq:E1E2seq}
			\xymatrix@1{
				0
					\ar[r] &
				M(\mu_1\mu_2(w))
					\ar[r]^-{
						\left(\begin{smallmatrix}
							g_1 \\ g_2
						\end{smallmatrix}\right)
					} &
				M(\mu_1(w'_1)) \oplus M(\mu_2(w'_2))
					\ar[r]^-{
						\left(\begin{smallmatrix}
							f_1 & f_2
						\end{smallmatrix}\right)
					} &
				M(w)
					\ar[r] &
				0
			}
			\in \es_w^0 \cup \es^1_w \cup \es^2_w.
		\end{equation}
		Let $f_1 = \theta_1 \psi_1$ and $f_2 = \theta_2 \psi_2$, where $\psi_1$ and $\psi_2$ are the canonical surjections onto $\im f_1$ and $\im f_2$ respectively, and $\theta_1$ and $\theta_2$ are the canonical inclusions of $\im f_1$ and $\im f_2$ into $M(w)$. Now $\im f_1$ and $\im f_2$ are submodules of $M(w)$ and thus are respectively isomorphic to string modules $M(w''_1)$ and $M(w''_2)$ for some substrings $w''_1$ and $w''_2$ of $w$. In particular, we have
		\begin{equation*}
			M(w''_1) = T_w M(\widehat{w}''_1)	\qquad \text{and} \qquad	M(w''_2) = T_w M(\widehat{w}''_2)
		\end{equation*}
		for some substrings $\widehat{w}''_1$ and $\widehat{w}''_2$ of $\widehat{w}$ in $A_w$, where $\widehat w$ is as in Remark~\ref{rem:AwString}. In addition, there exist inclusion morphisms
		\begin{equation*}		
			\widehat{\theta}_1 \in \Hom_{A_w}(M(\widehat{w}''_1),M(\widehat{w}))	\qquad \text{and} \qquad
			\widehat{\theta}_2 \in \Hom_{A_w}(M(\widehat{w}''_2),M(\widehat{w})).
		\end{equation*}
		such that
		\begin{equation*}
			\theta_1 = T_w \widehat{\theta}_1	\qquad \text{and} \qquad	\theta_2 = T_w \widehat{\theta}_2.
		\end{equation*}
		
		Suppose for a contradiction that (\ref{eq:E1E2seq}) is almost split. Then the morphism $f=\left(\begin{smallmatrix} f_1 & f_2 \end{smallmatrix}\right)$ is right almost split. So $f$ is not a retraction and for any object $X \in \Mod*A$ and any morphism $u \in \Hom_A(X,M(w))$ that is not a retraction, there exists a morphism $v \in \Hom_A(X, M(\mu_1(w'_1)) \oplus M(\mu_2(w'_2)))$ such that $u=fv$. Now consider the almost split sequence
		\begin{equation} \tag{$\ast\ast$} \label{eq:KQwass}
			\xymatrix@1{
				0
					\ar[r] &
				\tau M(\widehat{w})
					\ar[r]^-{q} &
				V
					\ar[r]^-{p} &
				M(\widehat{w})
					\ar[r] &
				0
			}
		\end{equation}
		from Lemma~\ref{lem:es4Image}. Since $\widehat{\theta}=\left(\begin{smallmatrix} \widehat{\theta}_1 & \widehat{\theta}_2 \end{smallmatrix}\right)$ and $\theta = \left(\begin{smallmatrix} \theta_1 & \theta_2 \end{smallmatrix} \right) = \left(\begin{smallmatrix} T_w \widehat{\theta}_1 & T_w \widehat{\theta}_2\end{smallmatrix} \right)$ are both maps between string modules, it is clear from the string combinatorics that neither of these maps are retractions. Since (\ref{eq:KQwass}) is almost split in $\Mod* A_w$, neither $p$ nor $T_w p$ is a retraction. Furthermore, there exists a morphism $\widehat{\varphi}$ such that $\widehat{\theta}=p \widehat{\varphi}$. Thus, we have the following commutative diagram.
		\begin{equation*}
			\xymatrix{
				X
					\ar[dr]^-u
					\ar[d]^-v
				\\
				M(\mu_1(w'_1)) \oplus M(\mu_2(w'_2))
					\ar[r]_-{
						\left(\begin{smallmatrix}
							f_1 & f_2
						\end{smallmatrix}\right)
					}
					\ar[d]^-{
						\left(\begin{smallmatrix}
							\psi_1 	& 0 \\
							0 		& \psi_2 
						\end{smallmatrix}\right)
					} &
				M(w)
					\ar@{=}[dd]
				\\
				T_w M(\widehat{w}''_1) \oplus T_w M(\widehat{w}''_2)
					\ar[dr]^-{\quad \left(\begin{smallmatrix}
							T_w \widehat{\theta_1} & T_w \widehat{\theta_2}
						\end{smallmatrix}\right)
					}
					\ar[d]^-{T_w \widehat{\varphi}}
				\\
				T_w V
					\ar[r]^-{T_w p} &
				T_w M(\widehat{w})
			}
		\end{equation*}	
		Hence, for any map $u \in \Hom_A(X,M(w))$ that is not a retraction, there exists a map
		\begin{equation*}
			h = T_w \widehat{\varphi}
			\begin{pmatrix}
				\psi_1	&	0	\\	0	&	\psi_2
			\end{pmatrix}
			v \in \Hom_A(X, T_w V)
		\end{equation*}
		such that $u=(T_w p) h$. So the morphism $T_w p \in \Mod*A$ is right almost split. It therefore follows that the exact sequence
		\begin{equation} \tag{$\ast\ast\ast$}\label{eq:Wchiass}
			\xymatrix@1{
				0
					\ar[r]
				& T_w \tau M(\widehat w)
					\ar[r]^-{T_w q}
				& T_w V
					\ar[r]^-{T_w p}
				& T_w M(\widehat w)
					\ar[r]
				& 0
			}
		\end{equation}
		is almost split in $\Mod*A$. By Lemma~\ref{lem:es4Image}, the exact sequences (\ref{eq:E1E2seq}) and (\ref{eq:Wchiass}) correspond to different basis elements of
		\begin{equation*}
			\Ext_A^1(M(w), \tau M(w)).
		\end{equation*}
		But this implies that there are two distinct almost split sequences ending in $M(w)$, which is not possible since almost split sequences are unique up to equivalence. So our original assumption that (\ref{eq:E1E2seq}) is almost split must be false.
	\end{proof}
	
	\begin{prop}
		The exact sequence in $\es^4_w$ is almost split.
	\end{prop}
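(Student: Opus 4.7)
The plan is to combine Lemma~\ref{lem:es4Image}, which identifies the sequence in $\es^4_w$ as the image under the exact, indecomposable-preserving functor $T_w$ of the almost split sequence in $\Mod*A_w$ ending in $M(\widetilde{w})$, with Lemma~\ref{lem:es12NotAS} and Auslander-Reiten duality to pin down this sequence as the almost split sequence ending in $M(w)$ in $\Mod*A$.

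The first step is to invoke the Auslander-Reiten formula
\[
	\dim_K \Ext^1_A(M(w), \tau M(w)) = \dim_K \uEnd_A(M(w)) = |\sap(w, w)|,
\]
and observe that the exact sequences constructed in $\es^1_w \cup \es^2_w \cup \es^3_w \cup \es^4_w$ are indexed by the admissible pairs in $\sap(w,w)$, so they provide representatives of a basis of $\Ext^1_A(M(w), \tau M(w))$ once linear independence is checked from the distinct middle-term structures and the explicit factorizations $f_i = \theta_i \psi_i$ used in their construction. The almost split class $\eta_{\mathrm{AR}} \in \Ext^1_A(M(w), \tau M(w))$ is, via the Auslander-Reiten duality $\Ext^1_A(M(w), \tau M(w)) \cong D\uEnd_A(M(w))$, the functional that is nonzero on the identity $1_{M(w)}$ and vanishes on $\rad \uEnd_A(M(w))$. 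Since $M(w)$ is indecomposable non-projective, $\uEnd_A(M(w))$ is local; the identity corresponds to the admissible pair $(w, w) \in \sap^4_w$, while every other $(w_+, w_-) \in \sap(w,w)$ represents a non-isomorphism, and hence lies in $\rad \uEnd_A(M(w))$. Consequently $\eta_{\mathrm{AR}}$ is, up to a scalar, the dual basis element associated with $(w, w)$, which by construction is represented by the unique sequence in $\es^4_w$.

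The hard part will be confirming the compatibility between the admissible-pair indexing used for the exact sequences in Section~\ref{sec:WchiExt} and the dual-basis structure induced by the Auslander-Reiten duality, i.e.\ verifying that the Yoneda pairing of the sequence in $\es^4_w$ with $1_{M(w)}$ is nonzero while its pairings with the remaining stable endomorphisms vanish. This should be traced by unwinding the Baer sums of the $\es^i_w$ representatives against the factorizations of each $(w_+, w_-)$ through the maps $\psi_i, \theta_i$ appearing in the constructions of $\es^1_w$ and $\es^2_w$, and --- when $\es^3_w$ is nonempty --- by invoking Lemma~\ref{lem:es3Image} to observe that the sequence in $\es^3_w$ descends from a non-almost-split exact sequence in $\Mod*A_w$, so it cannot coincide with the almost split image produced by Lemma~\ref{lem:es4Image}. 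Together with Lemma~\ref{lem:es12NotAS}, this forces $\eta_{\mathrm{AR}}$ to be the class of the sequence in $\es^4_w$, as required.
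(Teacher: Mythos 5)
Your proposal ultimately lands on the same elimination argument as the paper: Lemma~\ref{lem:es12NotAS} excludes $\es^1_w$ and $\es^2_w$, Lemma~\ref{lem:es3Image} together with Lemma~\ref{lem:es4Image} excludes $\es^3_w$ (since the pre-image of $\es^3_w$ under $T_w$ is not almost split while the pre-image of $\es^4_w$ is, and $T_w$ preserves indecomposability so the two image sequences have non-isomorphic indecomposable middle terms), and the fact that the sequences in $\es^1_w,\ldots,\es^4_w$ exhaust the extensions of $M(w)$ by $\tau M(w)$ pins down $\es^4_w$ as the almost split sequence. The supplementary Auslander--Reiten--duality framing you sketch --- that the AR class, viewed in $D\uEnd_A(M(w))$, vanishes on $\rad\uEnd_A(M(w))$, and that the basis $\sap(w,w)$ is adapted to the splitting $\uEnd_A(M(w)) = K\cdot 1 \oplus \rad$ because $(w,w)$ corresponds to the identity while all other admissible pairs correspond to endomorphisms with proper image --- is conceptually sound, but you are right to flag the compatibility step as the genuine gap: knowing that $\eta_{\mathrm{AR}}$ is the dual-basis functional at $(w,w)$ does not by itself show that the particular exact sequence the paper attaches to $(w,w)$ represents that dual-basis vector, since the bijection between $\sap(w,w)$ and the $\es^i_w$ is not \emph{a priori} the bijection induced by AR duality. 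As you do not carry out the Yoneda-pairing verification, the duality route remains a heuristic, and the load-bearing part of your argument is the elimination step, which matches the paper's proof. Both buy the same thing; the duality framing would, if completed, make the ``hence $\es^4_w$'' conclusion more self-contained, but the verification is nontrivial enough that the paper also prefers the elimination.
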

	\begin{proof}
		By Lemma~\ref{lem:es12NotAS}, we know that the exact sequences in $\es_w^1$ and $\es_w^2$ are not almost split. By Lemma~\ref{lem:es3Image} and Lemma~\ref{lem:es4Image}, we know that the exact sequences of $\es_w^3$ and $\es_w^4$ are in the image of $T_w$. Since the exact sequence in the pre-image of $\es_w^4$ is almost split in $A_w$, we know that $\es_w^3$ cannot be almost split by a similar argument to that used in the proof of Lemma~\ref{lem:es12NotAS}. Since the exact sequences in $\es_w^1,\ldots,\es_w^4$ determine all extensions of $M(w)$ by $\tau M(w)$, we conclude that the exact sequence in $\es_w^4$ is almost split.
	\end{proof}
	
	The above proposition implies that for any $w \in \Wchi$, the almost split sequence ending in $M(w)$ has an idecomposable middle term. Since we also know that $M(w)$ is $\tau$-periodic of period 2, with $\tau M(w) = M(\mu_1\mu_2(w))$, we have the following result.
	
	\begin{thm}
		Let $A$ be a symmetric special multiserial algebra corresponding to a Brauer configuration $\chi$.
		\begin{enumerate}[label=(\alph*)]
			\item Let $w \in \Wchi$. Then $M(w)$ is a mouth module of a tube of rank 2 in $_s\Gamma_A$.
			\item For any $w,w' \in \Wchi$, define an equivalence relation $w \sim w'$ if and only if $w = (\mu_1\mu_2)^i(w')$ for some $i \in\{0,1\}$. Then there exists a distinct tube $\mathcal{T}_{[w]}$ of rank 2 for each equivalence class $[w] \in {\Wchi}/{\sim}$.
		\end{enumerate}
	\end{thm}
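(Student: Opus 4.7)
My strategy is to combine the two technical ingredients just established --- the Corollary giving $\tau M(w)=M(\mu_1\mu_2(w))$ with $\tau^2 M(w)\cong M(w)$, and the Proposition giving that the almost split sequence ending in $M(w)$ has an indecomposable middle term --- with the Vinberg--Todorov classification of periodic stable components and the transport functor $T_w$ from Section~\ref{sec:WchiExt}.

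For part (a), I would first observe that the $\tau$-period of $M(w)$ is exactly $2$: it divides $2$ by the Corollary, and $M(w)\not\cong M(\mu_1\mu_2(w))$ since the strings $w$ and $\mu_1\mu_2(w)$ differ in both endpoint letters (with endpoints in distinct vertices of the respective $\mathbb{D}$-triples), and are not mutually inverse for the same reason. Hence $M(w)$ lies in a periodic stable component $\mathcal{T}$ of $_s\Gamma_A$, which by the classification used in the proof of the main theorem of Section~\ref{sec:HyperwalkTubes} has the form $\mathbb{Z}\Delta/G$ with $\Delta$ either $\mathbb{A}_\infty$ or a finite Dynkin diagram. The Proposition then forces $M(w)$ to lie at the mouth of $\mathcal{T}$. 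To exclude the finite Dynkin cases, I would invoke the exact, indecomposable-preserving functor $T_w\colon\Mod*A_w\to\Mod*A$ together with Lemma~\ref{lem:es4Image}: the well-understood tube of rank $2$ in $_s\Gamma_{A_w}$ containing $M(\widetilde w)$ (a $\mathbb{Z}\mathbb{A}_\infty/\langle\tau^2\rangle$-tube, arising from the hereditary $\widetilde{\mathbb{D}}$-type or clannish structure of $A_w$) maps under $T_w$ to an infinite family of pairwise non-isomorphic indecomposables all contained in $\mathcal{T}$, forcing $\Delta=\mathbb{A}_\infty$ and $G=\langle\tau^2\rangle$. Thus $\mathcal{T}$ is a tube of rank $2$ with $M(w)$ at the mouth.

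For part (b), the key observation is that a tube of rank $2$ in $_s\Gamma_A$ has exactly two indecomposables at its mouth, forming a single $\tau$-orbit. Combined with part (a), the mouth of $\mathcal{T}_{[w]}$ is therefore precisely $\{M(w),M(\mu_1\mu_2(w))\}$. Consequently, for $[w]\neq [w']$ in $\Wchi/{\sim}$ the mouths of $\mathcal{T}_{[w]}$ and $\mathcal{T}_{[w']}$ are disjoint, so these are distinct components of $_s\Gamma_A$.

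The main obstacle I anticipate is the last step of (a), namely excluding finite-Dynkin $\Delta$ for the component $\mathcal{T}$. Unlike the theorem of Section~\ref{sec:HyperwalkTubes}, here we have no connectedness or infinite-representation-type hypothesis on $A$ to invoke directly. The functor $T_w$ is what makes the argument go through: it transports the known tube structure of the auxiliary clannish algebra $A_w$ into $_s\Gamma_A$, and the technical care needed lies in verifying that the band modules in the $A_w$-tube map to pairwise non-isomorphic $A$-modules, which follows from $T_w$ being exact and preserving indecomposable objects as established in Section~\ref{sec:WchiExt}.
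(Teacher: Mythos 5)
The paper does not give an explicit proof of this theorem; it presents it as a direct consequence of the preceding Proposition (the sequence in $\es^4_w$ is almost split, so the AR sequence ending in $M(w)$ has indecomposable middle term) and the Corollary ($\tau M(w)=M(\mu_1\mu_2(w))$, $\tau^2 M(w)\cong M(w)$), implicitly invoking the same Vinberg--Todorov classification used in Section~\ref{sec:HyperwalkTubes}. Your proposal reconstructs exactly this implicit argument, and your treatment of part (b) is straightforwardly correct. The value added is that you identify two points the paper leaves unaddressed, and both are worth saying carefully.

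On the exact $\tau$-period being $2$: your heuristic that ``$w$ and $\mu_1\mu_2(w)$ differ in both endpoint letters, and are not mutually inverse for the same reason'' is not quite airtight. If $d=(x,y_1,y_2)$ and $d'=(x,y_2,y_1)$, then $\mu_1\mu_2(w)^{-1}$ has the same source polygon $y_1$ and the same target polygon $y_2$ as $w$, so the endpoints do not distinguish them. The rigorous observation is that the interior substring $w'=\beta_1\ldots\beta_n$ (which has $n\ge 1$ by (W1)) cannot satisfy $w'=(w')^{-1}$: a self-inverse nonzero string would force a central $\beta_m\beta_{m+1}$ with $\beta_{m+1}=\beta_m^{-1}$, which is not a string. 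Hence $w\neq\mu_1\mu_2(w)$ and $w\neq\mu_1\mu_2(w)^{-1}$, giving $M(w)\not\cong M(\mu_1\mu_2(w))$.

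On excluding the finite Dynkin case: you correctly notice there is no connectedness or infinite-representation-type hypothesis available here. Your instinct to use $T_w$ is the right one, but the specific claim that the $A_w$-tube maps ``under $T_w$ to an infinite family \ldots all contained in $\mathcal{T}$'' does not follow from what the paper establishes about $T_w$ — the paper only controls $T_w$ on the mouth AR sequence (Lemma~\ref{lem:es4Image}), not on irreducible maps higher in the tube, so it is not immediate that the images land in the same stable component. A cleaner route that uses only what the paper establishes: the modules $V_\pm^{(r)}$ at the $r$-th level of the rank-$2$ $A_w$-tube have strictly increasing dimension, $T_w$ preserves dimension (it is the identity on underlying vector spaces) and indecomposability, and the composition factors of $T_wV_\pm^{(r)}$ all lie in the finite set of simples indexed by the image of $\xi$. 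Hence the connected block of $A$ containing $M(w)$ admits infinitely many pairwise nonisomorphic indecomposables, so it is of infinite representation type; by Riedtmann's structure theorem this block cannot have a stable component of the form $\mathbb{Z}\Delta/G$ with $\Delta$ a finite Dynkin diagram, and $\Delta=\mathbb{A}_\infty$ is forced. With these two repairs your argument is complete.
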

	
	\appendix
	\section{Symmetric Clannish Band Modules} \label{sec:AppClans}
	Clannish and skewed-gentle algebras are important classes of tame algebras. They are certain generalisations the hereditary algebras of type $\mathbb{D}_n$ and $\Dtilde_n$ in a similar sense to the way string and gentle algebras generalise the hereditary algebras of type $\mathbb{A}_n$ and $\Atilde_n$. Clannish algebras are defined using the path algebra of a quiver modulo an inadmissible ideal. In particular, clannish algebras are defined with quivers that contain \emph{special loops} $\eta$ and ideals generated with zero relations and idempotent relations $\eta^2-\eta$. The indecomposable modules of clannish algebras have been classified in terms of indecomposable representations of clans. These are given in terms of \emph{clannish strings and bands}. However despite their name, clannish strings and bands have key differences with the strings and bands one would see in a special multiserial algebra. In this section, we will provide a brief summary on the representation theory of clannish algebras, with a particular focus on indecomposable modules that arise from symmetric clannish bands. The definitions and results in this section are sourced from \cite{Clans} and \cite{ClanMaps}.
	
	\subsection{Clannish algebras, words and bands}
	We will begin with the definition of a clannish algebra.
	\begin{defn}
		Let $Q$ be a finite quiver with a subset $Q_2 \subseteq Q_1$ of special loops and define a set of \emph{special relations}
		\begin{equation*}
			S = \{\eta^2-\eta : \eta \in Q_2\}.
		\end{equation*}
		An algebra $A$ is called \emph{clannish} if it is Mortia equivalent to $KQ/\langle Z \cup S\rangle$, where the following properties hold.
		\begin{enumerate}[label=(C\arabic*)]
			\item $Z$ is a set of zero-relations that do not start or end with a special loop, or involve the square of a special loop. \label{ClanLoopRel}
			\item For any vertex $v \in Q_0$, there are at most two arrows in $Q_1$ of source $v$, and at most two arrows in $Q_1$ of target $v$. \label{ClanSB1}
			\item For any arrow $\beta \in Q_1 \setminus Q_2$, there exists at most one arrow $\alpha \in Q_1$ such that $\alpha \beta \not\in Z$, and at most one arrow $\gamma \in Q_1$ such that $\beta \gamma \not\in Z$. \label{ClanSB2}
		\end{enumerate}
	\end{defn}
	
	\begin{rem}
		\ref{ClanSB1} implies that if there exists a special loop $\eta$ incident to two ordinary arrows $\alpha$ and $\beta$, then $s(\eta)$ is the source of precisely one of $\alpha$ or $\beta$ and the target of the other arrow. Moreover, \ref{ClanLoopRel} and \ref{ClanSB2} imply that that either $\alpha\beta \in Z$ (if $t(\alpha)=s(\eta)$) or $\beta\alpha \in Z$ (if $s(\alpha)=s(\eta)$).
	\end{rem}
	
	Properties \ref{ClanSB1} and \ref{ClanSB2} are somewhat analogous to the properties of a special biserial algebra. In fact there are many similarities in the combinatorics between clannish algebras and special biserial algebras. For example, the classification of indecomposable modules of clannish algebras is given by \emph{clannish strings and bands} and their associated modules. Clannish strings are not used in this paper, and so we will omit these from our discussion and instead focus on (symmetric) clannish bands.
	
	In what follows, it is important to clarify that by $\str_A$ and $\band_A$, we mean the set of strings and bands associated to $A$ in the sense of Section~\ref{sec:StringsBands} and that we are not necessarily referring to clannish strings and clannish bands. In addition, we allow $\eta\in\str_A$ but not $\eta^2$ for any special loop $\eta \in Q_2$, and we define the the formal inverse $\eta\inv = \eta$. That is, $\eta \in Q_1 \cap Q_1\inv$ for each $\eta \in Q_2$.
	
	Given a clannish algebra $A$ associated to a quiver $Q$, a \emph{clannish word} of $A$ is a concatenation of symbols $w=\alpha_1\ldots \alpha_n$ such that each $\alpha_i \in Q_1 \cup Q_1\inv$ and $w \in \str_A$. A clannish word $b$ is called a \emph{clannish band} if we also have $b \in \band_A$. A clannish band is called \emph{symmetric} if $b\inv$ is a cyclic permutation/rotation of $b$, or equivalently, if $b=w_1 \eta_1 w_1\inv w_2\inv \eta_2 w_2$ for some clannish subwords $w_1$ and $w_2$ and some special loops $\eta_1,\eta_2 \in Q_2$. Otherwise, the clannish band is called \emph{asymmetric}.
	
	\begin{rem}
		Note that we have deliberately avoided calling clannish words by the name `strings'. This is to avoid confusing the reader between strings and clannish strings, the latter of which must satisfy an additional maximality property over a \emph{clan} associated to the algebra. We refer the reader to \cite{Clans} and \cite{ClanMaps} for formal definitions and constructions of these notions rather than presenting them here, as they are not needed for this paper.
	\end{rem}
	
	For technical reasons, we will need to define a partial ordering $>$ on the set $(Q_1 \cup Q_1\inv) \setminus Q_2$. For any arrows $\alpha,\beta \in Q_1 \setminus Q_2$ such that $t(\alpha)=s(\beta)$ and $\alpha \neq \beta\inv$, but also $\alpha\beta \not\in \str_A$ (so $\alpha\beta$ does not avoid a relation of the algebra), we define $\beta\inv > \alpha$. This partial ordering extends to the set of all clannish words on $A$ in the following way. We say $w > w'$ if and only if $w = w_1 \alpha w_0$ and $w' = w_1' \alpha' w_0'$ with $w_0 = w'_0$ and $\alpha > \alpha'$.
	
	\subsection{Symmetric clannish band modules}
	Each symmetric clannish band gives rise to a family of indecomposable representations indexed by the finite-dimensional indecomposable modules of the algebra
	\begin{equation*}
		K\langle x,y \rangle / \langle x^2-x, y^2-y \rangle.
	\end{equation*}
	
	\begin{rem} \label{rem:clash}
		The full subcategory of $\MOD K\langle x,y \rangle / \langle x^2-x, y^2-y\rangle$ consisting of finite-dimensional indecomposable modules is equivalent to the full abelian subcategory of $\Mod* K\Dtilde_m$ (with $m>4$) consisting of all except one of the tubes in the stable Auslander-Reiten quiver (with $\Dtilde_m$ being of any orientation). We refer the reader to \cite{BlueBookII} for details, however we will remark that there are infinitely many homogeneous (rank 1) tubes indexed by $K \setminus \{0\}$ and precisely two tubes of rank 2 in this category. There is actually a sense of a clash in terminology here between clannish strings, clannish bands and classical strings and bands (over a special biserial algebra). The modules at the mouth of the tubes of $\Mod* K\Dtilde_4$ of rank two are actually string modules (in the classical sense), and yet they arise from symmetric clannish bands. This clash is present in Section~\ref{sec:rank2} of this paper, where we have string modules $M(w)$ with $w \in \Wchi$ that actually correspond to modules arising from symmetric clannish bands.
	\end{rem}
	
	Suppose $b=w\eta_1w\inv\eta_2$ is a symmetric clannish band of a clannish algebra $A$. From this, we will define a family of symmetric clannish band modules $M(b, V, \phi_x, \phi_y)$, where $V$ is an $m$-dimensional $K$-vector space and $\phi_x,\phi_y \in \End_K(V)$ such that $(V, \phi_x, \phi_y)$ is a representation of $K\langle x,y \rangle / \langle x^2 - x, y^2 - y \rangle$. First write
	\begin{equation*}
		w=\alpha_1\ldots\alpha_n.
	\end{equation*}
	The underlying vector space of $M(b, V, \phi_x, \phi_y)$ is then
	\begin{equation*}
		M(\ub, V, \phi_x, \phi_y) = \bigoplus_{i=1}^m \langle c_{i,0}, \ldots, c_{i,n} \rangle,
	\end{equation*}
	where for each $j$, we have
	\begin{equation*}
		\bigoplus_{i=1}^m \langle c_{i,j} \rangle = V.
	\end{equation*}
	$M(\ub, V, \phi_x, \phi_y)$ is given the structure of an $A$-module by defining the action of an ordinary arrow or stationary path $\beta \in A$ on $M(\ub, V, \phi_x, \phi_y)$ by
	\begin{equation*}
		c_{i,j}\beta = 
		\begin{cases}
			c_{i,j-1}	&	\text{if } \ubeta=\ualpha_j\inv, \\
			c_{i,j}		&	\text{if } \beta = \varepsilon_{t(\ualpha_j)} \text{ or } \varepsilon_{s(\ualpha_{j+1})}, \\
			c_{i,j+1}	&	\text{if } \ubeta = \ualpha_{j+1}, \\
			0			&	\text{otherwise},
		\end{cases}
	\end{equation*}
	and the action of a special loop $\eta \in Q_2$ by
	\begin{equation*}
		c_{i,j}\eta = 
		\begin{cases}
			\phi_x(c_{i,0})	&	\text{if } j=0 \text{ and } \eta=\eta_1, \\
			c_{i,j-1}			&	\text{if } 0<j<n\text{, } \ueta=\ualpha_j \text{ and } \ualpha_n\inv\ldots\ualpha_{j+1}\inv < \ualpha_1\ldots\ualpha_{j-1}, \\
			c_{i,j}				&	\text{if } 0<j<n\text{, } \ueta = \ualpha_{j} \text{ and } \ualpha_n\inv\ldots\ualpha_{j+1}\inv > \ualpha_1\ldots\ualpha_{j-1}, \\
								&	\text{or } 0<j<n\text{, } \ueta=\ualpha_{j+1} \text{ and } \ualpha_n\inv\ldots\ualpha_{j+2}\inv < \ualpha_1\ldots\ualpha_{j}, \\
			c_{i,j+1}			&	\text{if } 0<j<n \text{, } \ueta = \ualpha_{j+1} \text{ and } \ualpha_n\inv\ldots\ualpha_{j+2}\inv > \ualpha_1\ldots\ualpha_{j}, \\
			\phi_y(c_{i,n})	&	\text{if } j=n \text{ and } \eta=\eta_2, \\
			0					&	\text{otherwise}.
		\end{cases}
	\end{equation*}
	
	\begin{rem}
		Suppose for the symmetric clannish band $\ub = \uw \ueta_1 \uw\inv \ueta_2$, we have
		\begin{equation*}
			\uw=\ualpha_1\ldots\ualpha_{j-1} \ueta \ualpha_{j+1} \ldots \ualpha_n, 
		\end{equation*}
		for some $\eta \in Q_2$ and $1<j<n$. Then one may notice that if
		\begin{equation} \tag{$\ast$} \label{eq:WordGeq}
			\ualpha_n\inv\ldots\ualpha_{j+1}\inv > \ualpha_1\ldots\ualpha_{j-1},
		\end{equation}
		then for each $i$, $\eta$ acts on $c_{i,{j-1}}$ as though $\ueta$ were an arrow. Conversely, if
		\begin{equation} \tag{$\ast\ast$} \label{eq:WordLeq}
			\ualpha_n\inv\ldots\ualpha_{j+1}\inv < \ualpha_1\ldots\ualpha_{j-1},
		\end{equation}
		then for each $i$, $\eta$ acts on $c_{i,j}$ as though $\ueta$ were a formal inverse. However, note that the action of $\eta$ on $M(\ub,V,\phi_x,\phi_y)$ is still very different to the action of an ordinary arrow or formal inverse in the sense that we additionally have an idempotent action on $c_{i,j}$  in the case of (\ref{eq:WordGeq}) or on $c_{i,j-1}$ in the case of (\ref{eq:WordLeq}).
	\end{rem}
	
	\begin{rem} \label{rem:RelCorrespondence}
		Recall that the partial ordering $>$ is broadly defined (whenever it makes sense to do so) such that $\ubeta\inv > \ualpha$ for arrows $\alpha,\beta \in Q_1 \setminus Q_2$. Thus for a clannish word
		\begin{equation*}
			\uw=\ualpha_1\ldots\ualpha_{j-1} \ueta \ualpha_{j+1} \ldots \ualpha_n
		\end{equation*}
		with $\eta \in Q_2$ and $1<j<n$, we have $\ualpha_n\inv \ldots \ualpha_{j+1}\inv  > \ualpha_1 \ldots \ualpha_{j-1}$ if and only if $\uw = \uw_1 \uw_0 \ueta \uw_0\inv \uw'_1$, where the last symbol of $\uw_1$ is distinct from the first symbol of $\uw'_1$, and where the last symbol of $\uw_1$ is an arrow and the first symbol of $\uw'_1$ is an arrow. This is precisely where the relation $\gg$ comes from in Section~\ref{sec:Hyperstrings}.
	\end{rem}
	
	\begin{rem}
		A key difference between the bands of special biserial algebras and clannish bands is as follows. In the classical special biserial setting, the dimension of a band module is a multiple of the number of symbols of the corresponding band (that is, $\dim_K(M(b,m,\phi)) = mn$ for some band $b$ consisting of $n$ symbols). The same is not necessarily true for clannish band modules. In particular, if $b$ is a symmetric clannish band consisting of $n$ symbols, then $\dim_K(M(b,V,\phi_x,\phi_y)) = \frac{n}{2}\dim_K V$.
	\end{rem}
	
	\begin{thm}[\cite{Clans}, \cite{Deng}, \cite{ClanMaps}]
		The modules $M(\ub, V, \phi_x, \phi_y)$ are indecomposable. Moreover, $M(\ub, V, \phi_x, \phi_y) \cong M(\ub', V', \phi'_x, \phi'_y)$ if and only if $\ub$ is a rotation of $\ub'$ and $(V, \phi_x, \phi_y) \cong (V', \phi'_x, \phi'_y)$ in the category of finite-dimensional representations of $K\langle x,y\rangle / \langle x^2-x, y^2-y \rangle$.
	\end{thm}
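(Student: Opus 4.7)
The plan is to deduce the theorem from the equivalence of categories $\Mod*A \simeq \mathcal{R}(\clan_A)$ that is established in~\cite{ClanMaps} and implicit in~\cite{Clans}. Under this equivalence, the band representations of $\clan_A$ are already classified by a functorial filtration argument of Crawley-Boevey in~\cite{Clans}, refined by Deng in~\cite{Deng}: each symmetric clannish band $\ub$ indexes a family of indecomposable representations of $\clan_A$ parametrised (up to isomorphism) by finite-dimensional indecomposable modules of $K\langle x,y\rangle/\langle x^2 - x, y^2 - y\rangle$, and two such representations are isomorphic if and only if the bands agree up to rotation and the parametrising modules are isomorphic. What must be done here is verify that the concrete construction of $M(\ub, V, \phi_x, \phi_y)$ given in this subsection coincides, under the equivalence, with the standard clan band representation attached to the data $(\ub, V, \phi_x, \phi_y)$.

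First I would recall from~\cite{ClanMaps} the explicit functor $F\colon \mathcal{R}(\clan_A) \rightarrow \Mod*A$ realising the equivalence. Given a clan representation, one discards the auxiliary arrows of $Q_3$ (whose presence only enforces kernel and cokernel conditions), interprets the remaining data as a representation of $KQ$, and checks that the defining relations $Z$ and the idempotent relations $\eta^2 = \eta$ for $\eta \in Q_2$ hold. One then computes $F$ applied to the standard clan band representation on the underlying vector space $\bigoplus_{i,j} \langle c_{i,j}\rangle$. The actions of ordinary arrows and stationary paths fall out immediately: they are determined by $\ualpha_j, \ualpha_j^{-1}$ reading along $\ub$, and match the first three cases in the definition of $M(\ub, V, \phi_x, \phi_y)$ verbatim. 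The actions at the two distinguished occurrences of the special loops $\eta_1$ and $\eta_2$ encode $\phi_x$ and $\phi_y$ by construction of the clan band representation for symmetric bands.

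The main obstacle is reconciling the action of a special loop $\eta \in Q_2$ at an \emph{internal} occurrence inside $\uw$ with the rules coming from clan combinatorics. On the module side, the action is prescribed by the four subcases involving the linear order $\geq$ on letters at the relevant vertex; on the clan side, the functorial filtration forces a special letter $\ueta$ at position $j$ to behave like a direct arrow when $\ualpha_n^{-1}\ldots\ualpha_{j+1}^{-1} > \ualpha_1\ldots\ualpha_{j-1}$ (by Remark~\ref{rem:RelCorrespondence}, exactly when $\uw$ factors as $\uw_1\uw_0\ueta\uw_0^{-1}\uw'_1$ with an arrow at the appropriate junction) and like an inverse arrow in the opposite order, together with an idempotent summand at the fixed position from $\ueta^2 = \ueta$. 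Matching these prescriptions is a case-by-case bookkeeping exercise, but it is the only delicate step. Once the matching is established, $F$ sends the standard symmetric clan band representation to precisely $M(\ub, V, \phi_x, \phi_y)$, and both (1) indecomposability and (2) the isomorphism classification transport along the equivalence from the classical clan-theoretic statement in~\cite{Clans, Deng, ClanMaps} to the stated theorem.
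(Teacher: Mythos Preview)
The paper does not supply its own proof of this theorem: it is stated as a result quoted from \cite{Clans}, \cite{Deng}, and \cite{ClanMaps}, with no argument given beyond the citations. Your proposal --- to transport the classification of symmetric band representations along the equivalence $\Mod*A \simeq \mathcal{R}(\clan_A)$ from \cite{ClanMaps}, and to check that the explicit module $M(\ub, V, \phi_x, \phi_y)$ defined in this subsection is the image of the standard clan band representation --- is precisely how one would extract the statement from the cited sources, and it is correct in outline. The only substantive content beyond the citations is the bookkeeping you identify: matching the action of a special loop at an internal occurrence in $\uw$ with the order-dependent rules in the module definition, which the paper handles by simply writing down the module action in a form already compatible with the clan-theoretic conventions (cf.\ the remark preceding the theorem and Remark~\ref{rem:RelCorrespondence}).
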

	
	\begin{prop}[\cite{ClanMaps}] \label{prop:ClanBandIrred}
		Let $\ub$ be a symmetric clannish band in a clannish algebra $A$ and
		\begin{equation*}
			f \colon (V, \phi_x, \phi_y) \rightarrow (V', \phi'_x, \phi'_y)
		\end{equation*}
		be an irreducible morphism in the category of finite-dimensional representations of \begin{equation*} K\langle x,y\rangle / \langle x^2-x, y^2-y \rangle. \end{equation*} Then there exists a corresponding irreducible morphism
		\begin{equation*}
			f_A \colon M(\ub, V, \phi_x, \phi_y) \rightarrow M(\ub, V', \phi'_x, \phi'_y)
		\end{equation*}
		in $\Mod*A$.
	\end{prop}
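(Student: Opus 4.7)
The plan is to transport the irreducibility of $f$ across the categorical equivalence $\Mod*A \simeq \repclan$ that underpins the classification of clannish representations, via an intermediate functor associated to $\ub$. Specifically, define
\begin{equation*}
F_{\ub}\colon (V,\phi_x,\phi_y) \longmapsto M(\ub,V,\phi_x,\phi_y), \qquad f \longmapsto f_A,
\end{equation*}
where $f_A$ is the $K$-linear map that, with respect to the decomposition
\begin{equation*}
M(\ub,V,\phi_x,\phi_y) = \bigoplus_{j=0}^{n}\bigoplus_{i=1}^{m}\langle c_{i,j}\rangle
\end{equation*}
given in the construction preceding the proposition, acts on the $j$-th ``slice'' $\{c_{1,j},\ldots,c_{m,j}\}$ as $f$. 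The first step I would carry out is to verify that this assignment is a well-defined $K$-linear functor into $\Mod*A$. The diagonal nature of the action of each ordinary arrow and of each stationary path on $M(\ub,V,\phi_x,\phi_y)$ reduces this to checking compatibility with the two special loops $\eta_1,\eta_2$, which follows from the assumption that $f$ is a morphism of representations of $K\langle x,y\rangle/\langle x^2-x,y^2-y\rangle$, i.e.\ that $f\phi_x = \phi'_x f$ and $f\phi_y = \phi'_y f$.

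Next I would establish that $F_{\ub}$ is exact and fully faithful. Exactness is immediate from the levelwise construction, while full faithfulness is the content (restricted to a fixed band) of the morphism classification for clannish algebras in \cite{ClanMaps}: morphisms between symmetric clannish band modules built from the same band $\ub$ are in natural bijection with morphisms of the parametrising $K\langle x,y\rangle/\langle x^2-x,y^2-y\rangle$-modules. Combined with the fact (from the theorem immediately preceding the proposition) that $F_\ub$ sends non-isomorphic modules to non-isomorphic modules, this identifies the essential image of $F_\ub$ with the full additive subcategory of $\Mod*A$ spanned by the indecomposable summands of the $M(\ub,V,\phi_x,\phi_y)$.

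The main obstacle, and the last step, is to upgrade ``preserves irreducible morphisms'' from ``is a faithful additive functor between Krull--Schmidt categories'' (which is not enough on its own) to an actual statement about irreducibility in $\Mod*A$. Here I would invoke Auslander--Reiten theory. As recorded in Remark~\ref{rem:clash}, the category of finite-dimensional $K\langle x,y\rangle/\langle x^2-x,y^2-y\rangle$-modules is equivalent to the union of all but one tube in $_s\Gamma_{K\Dtilde_m}$; its almost split sequences are those in these tubes. Under $F_\ub$, together with the equivalence $\Mod*A \simeq \repclan$, this subcategory is identified with a union of tubes in $_s\Gamma_A$ that is closed under the Auslander--Reiten translate $\tau$ and under extensions. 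Any exact sequence of the form $0 \to F_\ub(V_\tau) \to F_\ub(V_{\text{mid}}) \to F_\ub(V) \to 0$ obtained by applying $F_\ub$ to an almost split sequence in the source category must therefore itself be the almost split sequence in $\Mod*A$ ending in $F_\ub(V)$, by uniqueness of almost split sequences and the fact that $\tau F_\ub(V) = F_\ub(\tau V)$ inside the identified subcategory. Consequently, $F_\ub$ sends almost split sequences to almost split sequences, and an irreducible morphism $f$ between indecomposable $(V,\phi_x,\phi_y)$ and $(V',\phi'_x,\phi'_y)$---characterised as appearing (up to scalar) as a component of the middle term of the almost split sequence ending in $(V',\phi'_x,\phi'_y)$---is carried to a morphism $f_A$ with the same characterising property in $\Mod*A$, proving irreducibility.
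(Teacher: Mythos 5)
The paper gives no proof of this proposition: it is stated with the attribution \texttt{[\cite{ClanMaps}]} and is simply cited from Geiss's article on maps between representations of clans, where it (or the morphism classification from which it follows) is established. So there is no internal proof to compare your argument against.

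Judged on its own merits, your sketch has the right shape but contains a genuine gap at the crucial last step. Defining $F_{\ub}$ and checking that it is exact and $K$-linear is fine; the internal actions of the special loops are ``universal'' (shifts and idempotent terms not involving $\phi_x,\phi_y$), so a slicewise morphism intertwines them, and at the two ends one uses $f\phi_x=\phi'_x f$ and $f\phi_y=\phi'_y f$. Invoking full faithfulness on morphisms between modules built from the same band is also acceptable as a citation to \cite{ClanMaps}. The problem is the inference that $F_{\ub}$ sends almost split sequences to almost split sequences. You argue by ``uniqueness of almost split sequences'' together with $\tau F_{\ub}(V) \cong F_{\ub}(\tau V)$, but uniqueness only identifies two sequences once both are known to be almost split. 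A priori, the almost split sequence in $\Mod*A$ ending in $F_{\ub}(V)$ could have middle term not in the essential image of $F_{\ub}$ (e.g.\ receiving irreducible morphisms from modules arising from other bands, strings, or projectives), in which case $F_{\ub}$ applied to the source-category almost split sequence would not be almost split. What is actually needed --- and what the cited classification in \cite{ClanMaps} provides --- is that the image of $F_{\ub}$ is closed under $\tau$ and that all irreducible maps into and out of $F_{\ub}(V)$ already lie in the image; you assert this closure but do not establish it. Until that is justified, the argument is circular: it assumes the very transfer of AR-structure along $F_{\ub}$ that the proposition is asserting.
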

	
	\subsection{Rank 2 tubes arising from symmetric clannish bands} \label{sec:ClanBandTubes}
	As per Remark~\ref{rem:clash}, the category of finite-dimensional representations of $K\langle x,y\rangle / \langle x^2-x, y^2-y \rangle$ is equivalent to the full subcategory of $K\Dtilde_m$ ($m>4$) that consists of the two tubes of rank 2, and infinitely many homogeneous tubes indexed by $K \setminus \{0\}$ (see \cite[XIII.2.6.Table]{BlueBookII} for an account). Of the rank 2 tubes in $K\langle x,y\rangle / \langle x^2-x, y^2-y \rangle$, one of these tubes has mouth representations
	\begin{equation*}
		(K,0, 0) \qquad \text{and} \qquad (K,\id_K, \id_K)
	\end{equation*}
	and the other tube has mouth representations
	\begin{equation*}
		(K,\id_K, 0) \qquad \text{and} \qquad (K,0, \id_K).
	\end{equation*}
	The almost split sequences of modules in these tubes are as follows.
	\begin{equation*}
		\xymatrix@1{
			0
				\ar[r]
			& V^{(m)}_-
				\ar[r]^-g
			& V^{(m-1)}_- \oplus V^{(m+1)}_+
				\ar[r]^-f
			& V^{(m)}_+
				\ar[r]
			& 0
		},
	\end{equation*}
	where
	\begin{align*}
		V_+^{(r)}&= \left(K^r,
		\left(\begin{smallmatrix}
			\phi_{x,+}	&	0			&	\cdots	&	0	\\
			0				&	\phi_{x,-}	&	\ddots	&	\vdots		\\
			\vdots			&	\ddots		&	\ddots	&	0	\\
			0				&	\cdots		&	0		&	\phi_{x,\sgn(r)}
		\end{smallmatrix}\right),
		\left(\begin{smallmatrix}
			\phi_{y,+}&	0			&	\cdots	&	0	\\
			\id_K		&	\phi_{y,-}	&	\ddots	&	\vdots		\\
			\vdots		&	\ddots		&	\ddots	&	0	\\
			0			&	\cdots		&	\id_K	&	\phi_{y,\sgn(r)}
		\end{smallmatrix}\right) \right), \\
		V_{-}^{(r)}&= \left(K^r,
		\left(\begin{smallmatrix}
			\phi_{x,-}	&	0				&	\cdots	&	0	\\
			0			&	\phi_{x,+}	&	\ddots	&	\vdots		\\
			\vdots		&	\ddots			&	\ddots	&	0	\\
			0			&	\cdots			&	0		&	\phi_{x,\sgn(r-1)}
		\end{smallmatrix}\right),
		\left(\begin{smallmatrix}
			\phi_{y,-}	&	0				&	\cdots	&	0	\\
			\id_K		&	\phi_{y,+}	&	\ddots	&	\vdots		\\
			\vdots		&	\ddots			&	\ddots	&	0	\\
			0			&	\cdots			&	\id_K	&	\phi_{y,\sgn(r-1)}
		\end{smallmatrix}\right) \right),
	\end{align*}
	\begin{equation*}
		\sgn(r) =
		\begin{cases}
			+	& \text{if } r \text{ is odd}, \\
			-	& \text{if } r \text{ is even},
		\end{cases}
	\end{equation*}
	and $\phi_{x,\pm},\phi_{y,\pm} \in \{\id_K,0\}$ with $\phi_{x,+} \neq \phi_{x,-}$ and $\phi_{y,+} \neq \phi_{y,-}$. The maps $f$ and $g$ are defined such that
	\begin{align*}
		f(v_-^{(1)},\ldots,v_-^{(m-1)},v_+^{(1)},\ldots,v_+^{(m+1)})&=(v_-^{(2)}+v_+^{(1)}, \ldots, v_-^{(m-1)}+v_+^{(m)})	\\
		g(v_-^{(1)},\ldots,v_-^{(m)})&=(v_-^{(1)}, \ldots, v_-^{(m-1)}, 0, v_-^{(1)}, \ldots, v_-^{(m)}).
	\end{align*}
	By Proposition~\ref{prop:ClanBandIrred}, this determines the two tubes of rank 2 corresponding to a symmetric clannish band in a clannish algebra.
	
	\subsection{Direct hyperstrings and symmetric clannish bands} \label{sec:HyperstringBand}
	Recall that given a symmetric special multiserial algebra $A=KQ/I$ corresponding to a Brauer configuration $\chi$, we defined direct hyperstrings $\bw$ in Section~\ref{sec:Hyperstrings} as tuples of strings in $\Wchi$. In this section, we discuss the relationship between direct hyperstrings and symmetric clannish band modules.
	
	Let $\bw=(w_1,\ldots, w_m)$ be a direct hyperstring of $A$, where we write each
	\begin{equation*}
		w_i=\alpha_i\beta_{i,1}\ldots\beta_{i,n_i}\gamma_i.
	\end{equation*}
	For each $i$, let $\widehat{Q}^{w_i}$ be the associated clannish quiver given in Section~\ref{sec:WchiTubes}. Next, rewrite each $w_i$ in the quiver $Q$ as a clannish word
	\begin{equation*}
		\widehat{w}_i= \widehat{h}_{i,1}\widehat{a}_{i}\widehat{b}_{i,1}\ldots\widehat{b}_{i,n_i}\widehat{c}_{i}\widehat{h}_{i,2}
	\end{equation*}
	in the quiver $\widehat{Q}^{w_i}$, where $\widehat{h}_{i,1}$ and $\widehat{h}_{i,2}$ are the (possibly equal) special loops of $\widehat{Q}^{w_i}$.
	
	From this we can define a connected clannish quiver $\widehat{Q}^{\bw}$ by glueing each $\widehat{Q}^{w_i}$ along arrows that correspond to certain common (up to inverse) substrings of each $w_i$. Firstly, if we have $\beta_{i-1,1}\ldots\beta_{i-1,n_{i-1}}$ equal to either $\beta_{i,1}\ldots\beta_{i,n_i}$ or $\beta_{i,n_i}\inv\ldots\beta_{i,1}\inv$ for some $i$, then we identify the whole quivers $\widehat{Q}^{w_{i-1}}=\widehat{Q}^{w_i}$. Otherwise, we must have $w_{i-1} \gg w_i$ and we begin by identifying the special loops $\widehat{h}_{i-1,2} = \widehat{h}_{i,1}$ and the symbols $\widehat{c}_{i-1}=\widehat{a}_{i}\inv$. This corresponds to identifying arrows in the quivers $\widehat{Q}^{w_{i-1}}$ and $\widehat{Q}^{w_{i}}$ in the natural way. Then we recall from property (R2) of the definition of $\gg$ that there exist common (up to inverse) substrings
	\begin{equation*}
		\beta_{i-1,n_{i-1}-j_i}\ldots\beta_{i-1,n_{i-1}} = \beta_{i,j_i+1}\inv\ldots\beta_{i,1}\inv
	\end{equation*}
	of $w_{i-1}$ and $w_i$ respectively, where $j_i$ is taken to be maximal. We thus make the additional identification of the symbols $\widehat{b}_{i-1,n_{i-1}-k}=\widehat{b}_{i,k+1}\inv$ for each $0 \leq k \leq j_i$. Performing this identification process for each $0 < i \leq m$, we obtain a connected clannish quiver $\widehat{Q}^{\bw}$, as required.
	
	Recall from Section~\ref{sec:WchiTubes} that for each quiver $\widehat{Q}^{w_i}$, we obtain a clannish algebra $\widehat{A}_{w_i} = K\widehat{Q}^{w_i} / \widehat{I}^{w_i}$. From this, we obtain a clannish algebra $\widehat{A}_{\bw} = K\widehat{Q}^{\bw} / \widehat{I}^{\bw}$ in the following way. We have $\widehat{h}_{i,1}^2 - \widehat{h}_{i,1}, \widehat{h}_{i,2}^2 - \widehat{h}_{i,2} \in \widehat{I}^{\bw}$ for each $0 < i \leq m$. In addition, if there exists a zero relation $\widehat{b}_{i,k}\widehat{b}_{i,l} \in \widehat{I}^{w_i}$ for some $i$, $k$ and $l$, then we have a zero relation $\widehat{b}_{i,k}\widehat{b}_{i,l} \in \widehat{I}^{\bw}$.
	
	Finally, if there exists a clannish subword $\widehat{b}_{i-1,k-1}\widehat{b}_{i-1,k}$ of $\widehat{w}_{i-1}$ and a clannish subword $\widehat{b}_{i,l}\widehat{b}_{i,l+1}$ of $\widehat{w}_{i}$ such that we have an identification $\widehat{b}_{i-1,k}=\widehat{b}_{i,l}\inv$ but also $\widehat{b}_{i-1,k-1}\neq\widehat{b}_{i,l+1}\inv$, then we note that $\widehat{b}_{i-1,k}$ and $\widehat{b}_{i,l}$ are respectively the first/last common (up to inverse) symbols of $\widehat{w}_{i-1}$ and $\widehat{w}_{i}$ resulting from the glueing procedure. Since these arise from common (up to inverse) substrings in a special multiserial algebra, it follows from property (SM1) of special multiserial algebras that if $\widehat{b}_{i-1,k} \in \widehat{Q}^{\bw}_1$, then we cannot have $\widehat{b}_{i-1,k-1} \in \widehat{Q}^{\bw}_1$ and $\widehat{b}_{i,l+1} \in (\widehat{Q}^{\bw}_1)\inv$ simultaneously. On the other hand, it also follows from properties (W2) and (W3) of $\Wchi$ that we cannot have $\widehat{b}_{i-1,k-1} \in (\widehat{Q}^{\bw}_1)\inv$ and $\widehat{b}_{i,l+1} \in \widehat{Q}^{\bw}_1$ simultaneously. A similar argument holds if $\widehat{b}_{i-1,k} \in (\widehat{Q}^{\bw}_1)\inv$. Thus, we conclude that $\widehat{b}_{i-1,k-1}$ and $\widehat{b}_{i,l+1}$ are either both arrows or both formal inverses. If they are both arrows, then we say $\widehat{b}_{i-1,k-1}\widehat{b}_{i,l+1} \in \widehat{I}^{\bw}$. Otherwise, we say $\widehat{b}_{i,l+1}\inv\widehat{b}_{i-1,k+1}\inv \in \widehat{I}^{\bw}$. This ensures conditions (C2) and (C3) of clannish algebras are met, and hence, that we have a clannish algebra $\widehat{A}_{\bw} = K\widehat{Q}^{\bw} / \widehat{I}^{\bw}$, as required.
	
	Now we can translate direct hyperstrings into symmetric clannish bands. For each clannish word $\widehat{w}_i$, let $\widehat{w}^-_i$ denote the clannish subword with the first special loop removed and let $\widehat{w}^+_i$ denote the clannish subword with the last special loop removed. Define a clannish word $\widehat{\bw}$ of $\widehat{A}_{\bw}$ by
	\begin{equation*}
		\widehat{\bw}=\widehat{w}^+_1\ldots\widehat{w}^+_m(\widehat{w}^-_1\ldots\widehat{w}^-_m)\inv.
	\end{equation*}
	We then note that $\widehat{\bw}$ is either a symmetric clannish band, which we denote by $\widehat{\mathbf{b}}_{\bw}$, or it is a proper power of a clannish word $\widehat{\mathbf{b}}_{\bw}$ that is a symmetric clannish band. In either case, we write $\widehat{\bw} = \widehat{\mathbf{b}}_{\bw}^r$ for some $r \geq 1$. The hyperstring module $M(\bw)$ then corresponds to the symmetric clannish band module $M(\widehat{\mathbf{b}}_{\bw}, K^r, \phi_x, \phi_y)$, where $(K^r, \phi_x, \phi_y)$ is the representation $V_+^{(r)}$ in Section~\ref{sec:ClanBandTubes}.
	
	\subsection{An example} \label{sec:ClanExample}
	Let $Q$ be the quiver with underlying graph
	\begin{equation*}
		\xymatrix{
		-1
			\ar@{-}[dr]^-{\alpha}
		&&&& n-1
			\ar@<1ex>@{-}[dr]^-{\beta_{n-1}}
		\\
		& 0
			\ar@{-}[r]^-{\beta_1}
		& \cdots
			\ar@{-}[r]^-{\beta_{j}}
		& j
			\ar@<-1ex>@{->}[dr]_-{\beta_{j+1}}
			\ar@<1ex>@{<-}[ur]^-{\beta_{n}}
		&& \vdots
		\\
		-2
			\ar@{-}[ur]^-{\alpha^\ast}
		&&&& j+1
			\ar@<-1ex>@{-}[ur]_-{\beta_{j+2}}
		}
	\end{equation*}
	where the vertex $0 \in Q_0$ either a source or a sink. Define $I=\langle \beta_{n} \beta_{j+1} \rangle$. The algebra $A=KQ/I$ is a special multiserial algebra that is isomorphic to the clannish (in fact, skewed-gentle in the sense of \cite{Pena} and \cite{BekkertSkewGentle}) algebra $\widehat{A} = K\widehat{Q} / \langle \widehat{\eta}^2 - \widehat{\eta}, \widehat{\beta}_{n} \widehat{\beta}_{j+1} \rangle$, where $\widehat{Q}$ is of the form
	\begin{center}
		\begin{tikzpicture}
			\draw (0,0) node {$
				\xymatrix{
				&&&& n-1
					\ar@<1ex>@{-}[dr]^-{\widehat{\beta}_{n-1}}
				\\
				-1
					\ar@{-}[r]^-{\widehat{\alpha}}
				& 0
					\ar@{-}[r]^-{\widehat{\beta}_1}
				& \cdots
					\ar@{-}[r]^-{\widehat{\beta}_{j}}
				& j
					\ar@<-1ex>@{->}[dr]_-{\widehat{\beta}_{j+1}}
					\ar@<1ex>@{<-}[ur]^-{\widehat{\beta}_{n}}
				&& \vdots
				\\
				&&&& j+1
					\ar@<-1ex>@{-}[ur]_-{\widehat{\beta}_{j+2}}
				}$
			};
			\draw[->] (-4.1,0.2) arc (39.9963:320:0.3);
			\draw (-4.9,0) node {\tiny$\widehat{\eta}$};
		\end{tikzpicture}
	\end{center}
	with the orientation of each non-loop arrow $\widehat{\gamma} \in \widehat{Q}_1$ the same orientation as its corresponding arrow $\gamma \in Q_1$.
	
	$\widehat{A}$ has infinitely many symmetric clannish bands. For example, one can see that we have distinct symmetric clannish bands
	\begin{equation*}
		\ub_i=(\widehat{\uw} \widehat{\ueta})^i (\widehat{\uw}\inv \widehat{\ueta})^i
	\end{equation*}
	for any $i\geq 1$, where $\widehat{w}$ is the (unique up to inverse) string of maximal length without the symbol $\widehat{\eta}$ such that $s(\widehat{w})=t(\widehat{w})=-1$.
	
	To each clannish band $\ub_i$, one obtains two distinct tubes of rank 2 in $\Mod*\widehat{A}$, which are described by the almost split sequences in Section~\ref{sec:ClanBandTubes}. In particular, we have almost split sequences
	\begin{equation*}
		\xymatrix@1{
			0
				\ar[r]
			& M(\ub_i,K,0,0)
				\ar[r]^-{\left(\begin{smallmatrix} 0 \\ \id_K \end{smallmatrix}\right)}
			& M\left(\ub_i,K^2,{\left(\begin{smallmatrix} \id_K & 0 \\ 0 & 0 \end{smallmatrix}\right)},{\left(\begin{smallmatrix} \id_K & 0 \\ \id_K & 0 \end{smallmatrix}\right)}\right)
				\ar[r]^-{\left(\begin{smallmatrix} \id_K & 0 \end{smallmatrix}\right)}
			& M(\ub_i,K,\id_K,\id_K)
				\ar[r]
			& 0
		}
	\end{equation*}
	\begin{equation*}
		\xymatrix@1{
			0
				\ar[r]
			& M(\ub_i,K,\id_K,\id_K)
				\ar[r]^-{\left(\begin{smallmatrix} 0 \\ \id_K \end{smallmatrix}\right)}
			& M\left(\ub_i,K^2,{\left(\begin{smallmatrix} 0 & 0 \\ 0 & \id_K \end{smallmatrix}\right)},{\left(\begin{smallmatrix} 0 & 0 \\ \id_K & \id_K \end{smallmatrix}\right)}\right)
				\ar[r]^-{\left(\begin{smallmatrix} \id_K & 0 \end{smallmatrix}\right)}
			& M(\ub_i,K,0,0)
				\ar[r]
			& 0
		}
	\end{equation*}
	corresponding to the mouth of one tube, and almost split sequences
	\begin{equation*}
		\xymatrix@1{
			0
				\ar[r]
			& M(\ub_i,K,0,\id_K)
				\ar[r]^-{\left(\begin{smallmatrix} 0 \\ \id_K \end{smallmatrix}\right)}
			& M\left(\ub_i,K^2,{\left(\begin{smallmatrix} \id_K & 0 \\ 0 & 0 \end{smallmatrix}\right)},{\left(\begin{smallmatrix} 0 & 0 \\ \id_K & \id_K \end{smallmatrix}\right)}\right)
				\ar[r]^-{\left(\begin{smallmatrix} \id_K & 0 \end{smallmatrix}\right)}
			& M(\ub_i,K,\id_K,0)
				\ar[r]
			& 0
		}
	\end{equation*}
	\begin{equation*}
		\xymatrix@1{
			0
				\ar[r]
			& M(\ub_i,K,\id_K,0)
				\ar[r]^-{\left(\begin{smallmatrix} 0 \\ \id_K \end{smallmatrix}\right)}
			& M\left(\ub_i,K^2,{\left(\begin{smallmatrix} 0 & 0 \\ 0 & \id_K \end{smallmatrix}\right)},{\left(\begin{smallmatrix} \id_K & 0 \\ \id_K & 0 \end{smallmatrix}\right)}\right)
				\ar[r]^-{\left(\begin{smallmatrix} \id_K & 0 \end{smallmatrix}\right)}
			& M(\ub_i,K,0,\id_K)
				\ar[r]
			& 0
		}
	\end{equation*}
	corresponding to the mouth of the other tube.
	
	One can describe the almost split sequences of the tubes of rank 2 in $\Mod* A$ by using the equivalence of categories $F\colon \Mod*\widehat{A} \rightarrow \Mod*A$ defined such that for any $M \in \Mod*\widehat{A}$, we have $FM = M$ as vector spaces. The idempotent actions on $FM$ are defined such that
	\begin{align*}
		FM \varepsilon_{-1} &= \im \widehat{\eta} \\
		FM \varepsilon_{-2} &= \Ker \widehat{\eta} \cong \Coker \widehat{\eta} \\
		FM \varepsilon_{i} &= M\widehat{\varepsilon}_{i}
	\end{align*}
	for each vertex $i \in Q_0$. Let $\theta_{-1}$ and $\theta_{-2}$ be the canonical inclusion of $\im \widehat{\eta}$ and $\Ker \widehat{\eta}$ into $M\widehat{\varepsilon}_{i}$ respectively, and let $\psi_{-1}$ and $\psi_{-2}$ be the canonical surjection of $M\widehat{\varepsilon}_{i}$ onto $\im \widehat{\eta}$ and $\Coker \widehat{\eta}$ respectively. Then for any $m \in FM$
	\begin{align*}
		m\alpha &=
		\begin{cases}
			\theta_{-1}(m) \widehat{\alpha}	&	\text{if } 0 \text{ is a sink,} \\
			\psi_{-1}(m \widehat{\alpha})		&	\text{if } 0 \text{ is a source,}
		\end{cases} \\
		m\alpha^\ast &=
		\begin{cases}
			\theta_{-2}(m) \widehat{\alpha}	&	\text{if } 0 \text{ is a sink,} \\
			\psi_{-2}(m \widehat{\alpha})		&	\text{if } 0 \text{ is a source,}
		\end{cases} \\
		m \gamma &= m \widehat{\gamma} \text{ for any } \gamma\neq\alpha,\alpha^\ast,
	\end{align*}
	where $m \widehat{\delta}$ is given by the action $\widehat{\delta} \in \widehat{A}$ on $M$. Finally, we define $Ff=f$ as a linear map for any morphism $f \in \Mod* \widehat{A}$.
	
	As an example, consider the clannish band $\ub_1$. Then it is easy to see that $FM(\ub_1, K, \id_K, \id_K) \cong M(w)$ and $FM(\ub_1, K, 0, 0) \cong M(w')$ for the strings $w,w' \in \str_A$ of maximal length such that $s(w)=t(w)=-1$ and $s(w')=t(w')=-2$. It is also straightforward to show that
	\begin{equation*}
		FM\left(\ub_1,K^2,{\left(\begin{smallmatrix} \id_K & 0 \\ 0 & 0 \end{smallmatrix}\right)},{\left(\begin{smallmatrix} \id_K & 0 \\ \id_K & 0 \end{smallmatrix}\right)}\right) \cong M(\bw_1)
	\end{equation*}
	for the hyperstring $\bw_1=(w, (w')\inv)$. Thus, we have an almost split sequence
	\begin{equation*}
		0 \rightarrow M(w') \rightarrow M(\bw_1) \rightarrow M(w) \rightarrow 0
	\end{equation*}
	in $\Mod*A$. We remark that $\bw_1$ in this section corresponds to the direct hyperstring $\bw_4$ in Section~\ref{sec:ExactSequences}.
	
	On the other hand, considering the clannish band $\ub_2$, one can show via Remark~\ref{rem:RelCorrespondence} that $FM(\ub_2, K, \id_K, 0) \cong M(\bw_2)$, where $\bw_2 = (w, w')$. We remark that $\bw_2$ in this section corresponds to the direct hyperstring $\bw_3$ in Section~\ref{sec:ExactSequences}.
	
	More generally, the symmetric clannish band $b_i$ corresponds to the direct hyperstring $(w,w',w,w',\ldots)$ consisting of $i$ terms.
	
	\section{Glossary of Notation} \label{ap:Glossary}
	\begin{longtable}{l | l}
		$K$	&	algebraically closed field	\\
		$Q$	&	finite quiver	\\
		$Q_0$	&	set of vertices of $Q$ \\
		$Q_1$	&	set of arrows of $Q$ \\
		$Q^{-1}_1$	&	set of formal inverses of $Q$ \\
		$Q_2$	&	set of special loops of a clannish algebra \\
		$I$		&	admissible set of relations for a quiver	\\
		$\varepsilon_x$	& stationary path at a vertex $x$ in a quiver \\
		$_s \Gamma_A$	& stable Auslander-Reiten quiver of the module category of an algebra $A$ \\
		$\tau$	&	Auslander-Reiten translate \\
		$\Omega(M)$	&	syzygy of a module $M$ \\
		$\chi$	&	Brauer configuration	\\
		$\Vchi$	& set of all vertices in $\chi$ \\
		$\Gchi$	& set of all germs of polygons in $\chi$	\\
		$\Pchi$	&	set of all polygons in $\chi$ \\
		$\kappa(g)$	&	vertex associated to a germ $g$	\\
		$\pi(g)$	&	polygon associated to a germ $g$	\\
		$\mult(v)$	&	multiplicity of a vertex $v$	\\
		$\sigma(g)$	&	the germ following $g$ in the cyclic ordering of a Brauer configuration \\
		$\cycle_v$		&	the cycle of arrows in $Q$ induced by a vertex $v \in \Vchi$ \\
		$\cycle_{v,\alpha}$	&	the rotation of $\cycle_v$ such that the first arrow is $\alpha$ \\
		$s(\alpha)$	&	polygon in $\Pchi$ (or vertex in $Q_0$) at the source of symbol $\alpha \in Q_1 \cup Q^{-1}_1$ \\
		$t(\alpha)$	&	polygon in $\Pchi$ (or vertex in $Q_0$) at the target of symbol $\alpha \in Q_1 \cup Q^{-1}_1$ \\
		$\widehat{s}(\alpha)$	&	germ in $\Gchi$ at the source of symbol $\alpha \in Q_1 \cup Q^{-1}_1$ \\
		$\widehat{t}(\alpha)$	&	germ in $\Gchi$ at the target of symbol $\alpha \in Q_1 \cup Q^{-1}_1$ \\
		$\ap(w,w')$	&	set of admissible pairs between strings $w$ and $w'$ \\
		$\sap(w,w')$	&	subset of $\ap(w,w')$ corresponding to stable homomorphisms \\
		$S(x)$	&	simple module associated to a polygon $x$ \\
		$P(x)$	&	indecomposable projective-injective associated to a polygon $x$ \\
		$\str_A$	&	set of all strings associated to an algebra $A=KQ/I$ \\
		$\band_A$	&	set of all bands associated to an algebra $A=KQ/I$ \\
		$\hstep$	&	set of all Green hyperwalk steps	 in $\chi$\\
		$\Mchi$	&	set of modules corresponding to Green hyperwalk steps in $\chi$ \\
		$\omega$	&	the bijection between $\Mchi$ and $\hstep$ \\
		$\Dchi$	&	set of all $\mathbb{D}$-triples in $\chi$	\\
		$\Wchi$	&	set of strings between $\mathbb{D}$-triples in $\chi$ corresponding to mouth modules \\ & of additional tubes of rank 2
	\end{longtable}
	
	\bibliography{Bibliography}{}

\begin{thebibliography}{10}

\bibitem{GentleDerMorph}
K.~K. Arnesen, R.~Laking, and D.~Pauksztello.
\newblock Morphisms between indecomposable complexes in the bounded derived
  category of a gentle algebra.
\newblock {\em J. Algebra}, 467:1--46, 2016.

\bibitem{GentleTriangulation}
I.~Assem, T.~Br\"{u}stle, G.~Charbonneau-Jodoin, and P.-G. Plamondon.
\newblock Gentle algebras arising from surface triangulations.
\newblock {\em Algebra Number Theory}, 4(2):201--229, 2010.

\bibitem{BlueBookI}
I.~Assem, D.~Simson, and A.~Skowro\'{n}ski.
\newblock {\em Elements of the representation theory of associative algebras.
  {V}ol. 1}, volume~65 of {\em London Mathematical Society Student Texts}.
\newblock Cambridge University Press, Cambridge, 2006.
\newblock Techniques of representation theory.

\bibitem{BekkertSkewGentle}
V.~Bekkert, E.~N. Marcos, and H.~A. Merklen.
\newblock Indecomposables in derived categories of skewed-gentle algebras.
\newblock {\em Comm. Algebra}, 31(6):2615--2654, 2003.

\bibitem{BekkertGentle}
V.~Bekkert and H.~A. Merklen.
\newblock Indecomposables in derived categories of gentle algebras.
\newblock {\em Algebr. Represent. Theory}, 6(3):285--302, 2003.

\bibitem{BensonRad}
D.~J. Benson.
\newblock Resolutions over symmetric algebras with radical cube zero.
\newblock {\em J. Algebra}, 320(1):48--56, 2008.

\bibitem{Chinburg}
F.~M. Bleher and T.~Chinburg.
\newblock Locations of modules for {B}rauer tree algebras.
\newblock {\em J. Pure Appl. Algebra}, 169(2-3):109--135, 2002.

\bibitem{Brenner}
S.~Brenner.
\newblock On the kernel of an irreducible map.
\newblock volume 365, pages 91--97. 2003.
\newblock Special issue on linear algebra methods in representation theory.

\bibitem{Broomhead}
N.~Broomhead.
\newblock Dimer models and {C}alabi-{Y}au algebras.
\newblock {\em Mem. Amer. Math. Soc.}, 215(1011):viii+86, 2012.

\bibitem{butlerRingel}
M.~C.~R. Butler and C.~M. Ringel.
\newblock Auslander-{R}eiten sequences with few middle terms and applications
  to string algebras.
\newblock {\em Comm. Algebra}, 15(1-2):145--179, 1987.

\bibitem{TameBiserial}
W.~Crawley-Boevey.
\newblock Tameness of biserial algebras.
\newblock {\em Arch. Math. (Basel)}, 65(5):399--407, 1995.

\bibitem{Clans}
W.~W. Crawley-Boevey.
\newblock Functorial filtrations. {II}. {C}lans and the {G}el'fand problem.
\newblock {\em J. London Math. Soc. (2)}, 40(1):9--30, 1989.

\bibitem{CBMaps}
W.~W. Crawley-Boevey.
\newblock Maps between representations of zero-relation algebras.
\newblock {\em J. Algebra}, 126(2):259--263, 1989.

\bibitem{Deng}
B.~Deng.
\newblock On a problem of {N}azarova and {R}oiter.
\newblock {\em Comment. Math. Helv.}, 75(3):368--409, 2000.

\bibitem{Drozd}
J.~A. Drozd.
\newblock Tame and wild matrix problems.
\newblock In {\em Representations and quadratic forms ({R}ussian)}, pages
  39--74, 154. Akad. Nauk Ukrain. SSR, Inst. Mat., Kiev, 1979.

\bibitem{DuffieldBGA}
D.~Duffield.
\newblock Auslander-{R}eiten components of symmetric special biserial algebras.
\newblock {\em J. Algebra}, 508:475--511, 2018.

\bibitem{DuffieldTameWild}
D.~Duffield.
\newblock Tame and wild symmetric special multiserial algebras, 2018,
  arXiv:1802.06442.

\bibitem{ErdmannString}
K.~Erdmann.
\newblock {\em Blocks of tame representation type and related algebras}, volume
  1428 of {\em Lecture Notes in Mathematics}.
\newblock Springer-Verlag, Berlin, 1990.

\bibitem{ErdmannAR}
K.~Erdmann and A.~Skowro\'{n}ski.
\newblock On {A}uslander-{R}eiten components of blocks and self-injective
  biserial algebras.
\newblock {\em Trans. Amer. Math. Soc.}, 330(1):165--189, 1992.

\bibitem{ErdSkowSurface}
K.~Erdmann and A.~Skowro\'{n}ski.
\newblock From {B}rauer graph algebras to biserial weighted surface algebras.
\newblock {\em J. Algebraic Combin.}, 51(1):51--88, 2020.

\bibitem{GroupsWOGroups}
P.~Gabriel and C.~Riedtmann.
\newblock Group representations without groups.
\newblock {\em Comment. Math. Helv.}, 54(2):240--287, 1979.

\bibitem{ClanMaps}
C.~Gei\ss.
\newblock Maps between representations of clans.
\newblock {\em J. Algebra}, 218(1):131--164, 1999.

\bibitem{Pena}
C.~Gei\ss and J.~A. de~la Pe\~{n}a.
\newblock Auslander-{R}eiten components for clans.
\newblock {\em Bol. Soc. Mat. Mexicana (3)}, 5(2):307--326, 1999.

\bibitem{FuncFilt}
I.~M. Gel'fand and V.~A. Ponomarev.
\newblock Indecomposable representations of the {L}orentz group.
\newblock {\em Uspehi Mat. Nauk}, 23(2 (140)):3--60, 1968.

\bibitem{Multiserial}
E.~L. Green and S.~Schroll.
\newblock Multiserial and special multiserial algebras and their
  representations.
\newblock {\em Adv. Math.}, 302:1111--1136, 2016.

\bibitem{BCA}
E.~L. Green and S.~Schroll.
\newblock Brauer configuration algebras: a generalization of {B}rauer graph
  algebras.
\newblock {\em Bull. Sci. Math.}, 141(6):539--572, 2017.

\bibitem{SMQuotient}
E.~L. Green and S.~Schroll.
\newblock Special multiserial algebras are quotients of symmetric special
  multiserial algebras.
\newblock {\em J. Algebra}, 473:397--405, 2017.

\bibitem{AlmostGentle}
E.~L. Green and S.~Schroll.
\newblock Almost gentle algebras and their trivial extensions.
\newblock {\em Proc. Edinb. Math. Soc. (2)}, 62(2):489--504, 2019.

\bibitem{BGACover}
E.~L. Green, S.~Schroll, and N.~Snashall.
\newblock Group actions and coverings of {B}rauer graph algebras.
\newblock {\em Glasg. Math. J.}, 56(2):439--464, 2014.

\bibitem{BGAExt}
E.~L. Green, S.~Schroll, N.~Snashall, and R.~Taillefer.
\newblock The {E}xt algebra of a {B}rauer graph algebra.
\newblock {\em J. Noncommut. Geom.}, 11(2):537--579, 2017.

\bibitem{GreenWalk}
J.~A. Green.
\newblock Walking around the {B}rauer {T}ree.
\newblock {\em J. Austral. Math. Soc.}, 17:197--213, 1974.
\newblock Collection of articles dedicated to the memory of Hanna Neumann, VI.

\bibitem{Vinberg}
D.~Happel, U.~Preiser, and C.~M. Ringel.
\newblock Vinberg's characterization of {D}ynkin diagrams using subadditive
  functions with application to {$D{\rm Tr}$}-periodic modules.
\newblock In {\em Representation theory, {II} ({P}roc. {S}econd {I}nternat.
  {C}onf., {C}arleton {U}niv., {O}ttawa, {O}nt., 1979)}, volume 832 of {\em
  Lecture Notes in Math.}, pages 280--294. Springer, Berlin, 1980.

\bibitem{SkowDomestic}
T.~Holm and A.~Skowro\'{n}ski.
\newblock Derived equivalence classification of symmetric algebras of domestic
  type.
\newblock {\em J. Math. Soc. Japan}, 58(4):1133--1149, 2006.

\bibitem{TreeMaps}
H.~Krause.
\newblock Maps between tree and band modules.
\newblock {\em J. Algebra}, 137(1):186--194, 1991.

\bibitem{KernelMap}
H.~Krause.
\newblock The kernel of an irreducible map.
\newblock {\em Proc. Amer. Math. Soc.}, 121(1):57--66, 1994.

\bibitem{Labardini}
D.~Labardini-Fragoso.
\newblock Quivers with potentials associated to triangulated surfaces.
\newblock {\em Proc. Lond. Math. Soc. (3)}, 98(3):797--839, 2009.

\bibitem{MarshSchroll}
R.~J. Marsh and S.~Schroll.
\newblock The geometry of {B}rauer graph algebras and cluster mutations.
\newblock {\em J. Algebra}, 419:141--166, 2014.

\bibitem{Qiu}
Y.~Qiu and Y.~Zhou.
\newblock Cluster categories for marked surfaces: punctured case.
\newblock {\em Compos. Math.}, 153(9):1779--1819, 2017.

\bibitem{Roggenkamp}
K.~W. Roggenkamp.
\newblock Biserial algebras and graphs.
\newblock In {\em Algebras and modules, {II} ({G}eiranger, 1996)}, volume~24 of
  {\em CMS Conf. Proc.}, pages 481--496. Amer. Math. Soc., Providence, RI,
  1998.

\bibitem{TrivialGentle}
S.~Schroll.
\newblock Trivial extensions of gentle algebras and {B}rauer graph algebras.
\newblock {\em J. Algebra}, 444:183--200, 2015.

\bibitem{BlueBookII}
D.~Simson and A.~Skowro\'{n}ski.
\newblock {\em Elements of the representation theory of associative algebras.
  {V}ol. 2}, volume~71 of {\em London Mathematical Society Student Texts}.
\newblock Cambridge University Press, Cambridge, 2007.
\newblock Tubes and concealed algebras of Euclidean type.

\bibitem{Todorov}
G.~Todorov.
\newblock Almost split sequences for {${\rm Tr}D$}-periodic modules.
\newblock In {\em Representation theory, {II} ({P}roc. {S}econd {I}nternat.
  {C}onf., {C}arleton {U}niv., {O}ttawa, {O}nt., 1979)}, volume 832 of {\em
  Lecture Notes in Math.}, pages 600--631. Springer, Berlin, 1980.

\bibitem{VonHohne}
H.-J. von H\"{o}hne and J.~Waschb\"{u}sch.
\newblock Die struktur {$n$}-reihiger {A}lgebren.
\newblock {\em Comm. Algebra}, 12(9-10):1187--1206, 1984.

\bibitem{WaldWasch}
B.~Wald and J.~Waschb\"{u}sch.
\newblock Tame biserial algebras.
\newblock {\em J. Algebra}, 95(2):480--500, 1985.

\end{thebibliography}
	\bibliographystyle{habbrv}
\end{document}